\newtheorem{lemma1}{}[section]
\newenvironment{lemma}{\begin{lemma1}{\bf Lemma.}}{\end{lemma1}}
\newenvironment{theorem}{\begin{lemma1}{\bf Theorem.}}{\end{lemma1}}
\newenvironment{proposition}{\begin{lemma1}{\bf Proposition.}}{\end{lemma1}}
\newenvironment{corollary}{\begin{lemma1}{\bf Corollary.}}{\end{lemma1}}
\newenvironment{remark}{\begin{lemma1}{\bf Remark.}\rm}{\end{lemma1}}
\newenvironment{definition}{\begin{lemma1}{\bf Definition.}}{\end{lemma1}}
\newenvironment{setup}{\begin{lemma1}{\bf Set-up.}}{\end{lemma1}}
\newenvironment{conjecture}{\begin {lemma1}{\bf Conjecture.}}{\end{lemma1}}
\newenvironment{terminology}{\begin{lemma1}{\bf Terminology.}}{\end{lemma1}}
\newenvironment{remark*}{{\bf Remark.}}{}
\newenvironment{example*}{{\bf Example.}}{}
\newenvironment{assumption*}{{\bf Assumption.}}{}
\newcommand{\R}{\ensuremath{\mathbb{R}}}
\newcommand{\Q}{\ensuremath{\mathbb{Q}}}
\newcommand{\Z}{\ensuremath{\mathbb{Z}}}
\newcommand{\C}{\ensuremath{\mathbb{C}}}
\newcommand{\N}{\ensuremath{\mathbb{N}}}
\newcommand{\PP}{\ensuremath{\mathbb{P}}}
\newcommand{\merom}[3]{\ensuremath{#1\colon #2 \dashrightarrow #3}}
\newcommand{\holom}[3]{\ensuremath{#1\colon #2  \rightarrow #3}}
\newcommand\sO{{\mathcal O}}
\DeclareMathOperator*{\pic}{Pic}
\newcommand{\Chow}[1]{\ensuremath{\mbox{\rm Chow}(#1)}}
\newcommand{\NEX}{\overline{\mbox{NE}}(X)}
\newcommand{\NE}[1]{ \ensuremath{ \overline { \mbox{NE} }(#1)} }
\DeclareMathOperator*{\NS}{NS}
\newcommand{\Mov}[1]{ \ensuremath{ \overline{ \mbox{\rm Mov} }(#1)} }
\newcommand{\Move}[1]{ \ensuremath{ \overline{ \mbox{\rm Mov} }^e(#1)} }
\newcommand{\MovO}[1]{ \ensuremath{ \mbox{\rm Mov}^\circ(#1)} }
\newcommand{\Movp}[1]{ \ensuremath{ 
{ \mbox{\rm Mov} }^+(#1)} }
\newcommand{\Nef}[1]{ \ensuremath{ \mbox{\rm Nef}(#1)} }
\newcommand{\Nefe}[1]{ \ensuremath{ \mbox{\rm Nef}^e(#1)} }
\newcommand{\Nefp}[1]{ \ensuremath{  \mbox{\rm Nef}^+(#1)} }
\newcommand{\Amp}[1]{ \ensuremath{ \mbox{\rm Amp}(#1)} }
\newcommand{\Eff}[1]{ \ensuremath{ \mbox{\rm Eff}(#1)} }
\newcommand{\EffO}[1]{ \ensuremath{ \mbox{\rm Eff}^\circ(#1)} }
\newcommand{\Bigcone}[1]{ \ensuremath{ \mbox{\rm Big}(#1)} }
\newcommand{\ClQ}[1]{ \ensuremath{ \mbox{Cl}(#1)_{\Q}} }
\DeclareMathOperator*{\Aut}{\rm Aut}
\DeclareMathOperator*{\Bir}{\rm Bir}
\DeclareMathOperator*{\PsAut}{\rm PsAut}
\DeclareMathOperator{\Exc}{\rm Exc}
\numberwithin{equation}{section}
\title{On the relative cone conjecture for families of IHS manifolds} 
\date{\today}
\subjclass[2000]{14J50, 14E07, 14J42, 14D99, 14E30, 14J27, 14J28.}
\keywords{Cone conjecture, IHS manifolds, automorphisms, Minimal model program}
\author{Andreas H\"oring}
\author{Gianluca Pacienza}
\author{Zhixin Xie}
\address{Andreas H\"oring, Universit\'e C\^ote d'Azur, CNRS, LJAD, France}
\email{Andreas.Hoering@univ-cotedazur.fr}
\address{Gianluca Pacienza,
Universit\'e de Lorraine, CNRS, IECL, 
F-54000 Nancy, France}
\email{gianluca.pacienza@univ-lorraine.fr}
\address{Zhixin Xie,
Universit\'e de Lorraine, CNRS, IECL, 
F-54000 Nancy, France}
\email{zhixin.xie@univ-lorraine.fr}
\begin{document}

\begin{abstract} 
We study the relative cone conjecture for families of $K$-trivial varieties 
with vanishing irregularity. As an application 
we prove that  the relative movable and the relative nef cone conjectures hold for fibrations in projective IHS manifolds of the 4 known deformation types. 
\end{abstract}

\maketitle

\section{Introduction}

By the cone theorem the Mori cone of a Fano manifold is always a rational polyhedral cone.
This no longer holds for projective manifolds with trivial canonical class
which may have a nef cone with ``round'' shape or infinitely many extremal rays \cite{Kov94, Bau98}.
The Kawamata-Morrison cone conjecture ({see \cite{Mor93, Mor96, Kaw97}) 
predicts that in this case the action of the automorphism group on the cone of effective nef classes  admits a rational, polyhedral fundamental domain.  To state the conjecture more precisely and more generally (following Kawamata \cite{Kaw97} and Totaro \cite{Tot10}),  we formalise the  set-up. 
\begin{definition}
A klt K-trivial fibration is a normal $\Q$-factorial klt pair $(X, B)$,
endowed with a fibration 
$
\holom{\pi}{X}{S}
$ 
over a quasi-projective variety $S$
such that $K_X+B \equiv_\pi 0$.
\end{definition}

\begin{remark*}
Since the nonvanishing conjecture is known for klt pairs that are numerically trivial, e.g. \cite[Theorem 3.1]{FG12}, the assumption implies $K_X+B \sim_{\R, \pi} 0$ 
\end{remark*}

\smallskip
The  Kawamata-Morrison cone conjecture, with the extension to pairs introduced by Totaro, is the following.
\begin{conjecture}\label{conj:cone}
Let $(X,B)\to S$ be a klt K-trivial fibration. Then the following hold.
\begin{enumerate}
\item[(1)] 
There exists a rational polyhedral  cone $\Pi$ which is a fundamental domain for the
action of the group of relative automorphisms preserving the boundary $\Aut(X/S,B)$ on 
$$
\Nefe{X/S}\coloneqq\Nef{X/S}\cap \Eff{X/S}
$$
in the sense that
\begin{enumerate}
\item[(a)] $\Nefe{X/S}= \cup_{g\in \Aut(X/S,B)} g^*\Pi$.
\item[(b)] $\Pi^\circ \cap g^*\Pi^\circ = \emptyset$, unless $g^* = \textrm{id}$ in $\textrm{GL}\big(N^1(X/S)_\R\big)$.
\end{enumerate}
\item[(2)] 
There exists a rational polyhedral  cone $\Pi'$ which is a fundamental domain in the sense above for the
action of the group of relative pseudoautomorphisms preserving the boundary $\PsAut(X/S,B)$ on  
$$
\Move{X/S}\coloneqq\Mov{X/S}\cap \Eff{X/S}.
$$ 
\end{enumerate}
\end{conjecture}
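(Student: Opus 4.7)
The plan is to attack Conjecture 1.2 by reducing the relative statement to an absolute version fiber by fiber, and then globalizing via the fibration structure. Since the full generality is out of reach, I would focus on the setting indicated by the paper's title and abstract: fibers with vanishing irregularity, with the IHS application as the target. The guiding principle is that $h^1(\sO_{X_s}) = 0$ makes Hodge theory of fibers rigid enough to match the absolute and relative pictures.

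The first step is to analyze the restriction morphism $\rho\colon N^1(X/S)_\R \to N^1(X_s)_\R$ for a very general fiber $X_s$. Using $\rho$, I would identify $\Nefe{X/S}$ with (essentially) the monodromy-invariant part of $\Nefe{X_s}$, up to classes pulled back from the base. The second step is to invoke the absolute cone conjecture for $X_s$ --- known for the four deformation types of projective IHS manifolds via Markman's monodromy approach, Amerik--Verbitsky, and others --- to produce a rational polyhedral fundamental domain for $\Aut(X_s)$ acting on $\Nefe{X_s}$. Cutting by the monodromy-invariant locus and lifting via $\rho$ would then yield a candidate fundamental cone $\Pi$ for $\Aut(X/S, B)$.

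The third and crucial step, which I expect to be the main obstacle, is to show that enough fiber-wise automorphisms actually lift to elements of $\Aut(X/S, B)$. The vanishing irregularity is essential here: when $h^1(\sO_{X_s}) = 0$, line bundles on fibers deform uniquely, Torelli-type statements for IHS manifolds allow one to deform fiber-wise automorphisms that respect the monodromy representation, and properness of the relative automorphism scheme upgrades these infinitesimal lifts to genuine elements of $\Aut(X/S, B)$. For part (2) on the movable cone, the same scheme applies after replacing $\Nefe{\cdot}$ by $\Move{\cdot}$ and using the chamber decomposition of $\Move{X/S}$ into nef cones of small $\Q$-factorial modifications $Y \dashrightarrow X$ over $S$; one applies part (1) to each such $Y/S$ and identifies chambers via pseudoautomorphisms in $\PsAut(X/S, B)$. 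Without the vanishing irregularity, extra deformations of line bundles coming from the Picard varieties of fibers would obstruct the lifting step and generally destroy the polyhedrality of the candidate fundamental domain --- this is precisely where the strategy breaks down beyond the paper's hypothesis.
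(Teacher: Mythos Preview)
The statement you are addressing is a \emph{conjecture}, not a theorem the paper proves in full generality; the paper establishes it only for fibrations in IHS manifolds (Theorems 1.4 and 1.5). So the relevant comparison is between your proposed strategy for that special case and the paper's.

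Your third step --- lifting automorphisms of a very general fibre $X_s$ to $\Aut(X/S,B)$ --- is a genuine gap, and the paper contains an explicit counterexample. In Section~\ref{sectionfamilyK3} the authors build a smooth family $\pi\colon X\to C$ of Kummer K3 surfaces with $\rho(X/C)=\rho(F)=17$ for which $\Aut(X_s)$ is infinite for every fibre but $\Aut(X/C)$ is \emph{finite} (Proposition~\ref{proposition:aut:finite}). The introduction also cites Oguiso's example where fibrewise automorphisms fail to extend even birationally. So the Torelli-plus-deformation argument you sketch cannot work as stated: vanishing irregularity guarantees that line bundles deform uniquely, but it does not force automorphisms of $X_s$ to respect the global monodromy, and those that do not will never globalise.

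The paper's route is different in two essential ways. First, it passes not to a very general closed fibre $X_s$ but to the \emph{generic fibre} $X_K$ over the function field $K=\C(S)$ (Lemmas~\ref{lem:fait}--\ref{lem:IHS}); automorphisms of $X_K$ automatically commute with monodromy and do extend over a dense open of $S$. The movable cone conjecture for $X_K$ is then imported from Takamatsu's work over non-closed fields, and \cite[Theorem 6.1]{LZ22} transfers it to $\Move{X/S}$. Second, the passage from the movable to the nef cone conjecture is not done SQM-by-SQM via fibrewise arguments, but through the geography of models (Theorem~\ref{thm:ChoiShokurov}, Propositions~\ref{prop:finite_decomp} and \ref{prop:mov_implies_nef}): a polyhedral fundamental domain for $\PsAut(X/S)$ on $\Move{X/S}$ is sliced into finitely many Mori chambers, yielding both the finiteness of SQMs and a polyhedral fundamental domain for $\Aut(X'/S)$ on each $\Nefe{X'/S}$. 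Your proposal has the logical direction reversed (nef first, then movable), which is also why the lifting problem looks unavoidable from your angle.
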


Item (1) (resp.\ (2)) above will be referred to as the relative nef (resp. movable) cone conjecture for $(X,B)$ over $S$. 
We refer the reader to Section \ref{subsec:cones} for all the definitions and variants of the above conjecture whose relevance, apart from providing a unified framework for Fano and $K$-trivial varieties, is due to its consequences for the birational geometry of a $K$-trivial variety $X$ (see e.g.\ \cite[Theorem 1.5]{GLSW24}). 

Conjecture \ref{conj:cone} is open even in the smooth absolute case without boundary, i.e.\ when $S$ is a point, $X$ is smooth and $B=0$. In this case we know by the celebrated Beauville-Bogomolov decomposition theorem that a projective $K$-trivial manifold is, up to a finite \'etale cover, a product of Calabi-Yau (CY) manifolds, of irreducible holomorphic symplectic (IHS) manifolds and of an abelian variety. 
Although Prendergast-Smith \cite{P-S12} proved the conjecture for abelian varieties and Markman \cite{Mar11} and Amerik-Verbitsky  \cite{AV17, AV20} proved it for IHS manifolds, there is no general descent result for finite étale covers (see however \cite{PS23} and \cite{MQ24})) 
and for CY manifolds, despite the effort of many people, it is wide open already in dimension 3 (cf.\ \cite{Lut24}). 

In the relative case the situation is even worse and no results at all are known for fibrations having arbitrarily high relative dimension. After the pioneering work of Kawamata in dimension 3 \cite{Kaw97}, only recently weak versions of the relative cone conjecture were obtained for fibrations in surfaces, see \cite{LZ22,Li23,MS24}.

One strong motivation for studying the relative case comes from the fact that, modulo standard conjectures in the MMP, the relative cone conjecture implies for instance that the number of minimal models of any terminal projective variety with non-negative Kodaira dimension is finite up to isomorphisms, see \cite[Theorem 2.14]{CL14}. Notice however that the relative automorphism group and the relative nef effective cone of a fibration $X \rightarrow S$ can be radically different from their counterparts on the very general fibre $X_s$. For example, an automorphism $\alpha_s$ that exists on every general fibre $X_s$ may not extend to $X$, even as a birational automorphism \cite[Example 1.3(1)]{Ogu24}. In Section \ref{sectionfamilyK3} we exhibit a smooth family of Kummer surfaces,
where $\Aut(X/C)$ is finite, while $\Bir(X/C)$ and $\Aut(X_s)$ are infinite.
Thus the ideas and techniques used to treat the absolute case cannot be immediately transposed to tackle the relative case.

The main result of the present paper  concerns the relative cone conjecture for fibrations in IHS manifolds. 

\begin{theorem}\label{cor:IHS}
Let $\pi\colon X\to S$  be a fibration between $\Q$-factorial normal projective varieties
such that the total space $X$ is klt. Suppose that the very general fibre of $\pi$ is a projective IHS manifold of one of the 4 known deformation types.  
Then the relative movable cone conjecture holds for $X$ and up to isomorphisms over $S$ there exist only finitely many $X'$ arising as small $\Q$-factorial modifications (SQM) of $X$ over $S$, each of which satisfies the  relative nef cone conjecture.
 \end{theorem}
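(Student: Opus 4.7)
The strategy is to apply the abstract relative cone conjecture theorem for klt $K$-trivial fibrations with vanishing irregularity, which the authors develop earlier in the paper (as announced in the abstract), and then to invoke the absolute cone conjecture for IHS manifolds due to Markman \cite{Mar11} and Amerik--Verbitsky \cite{AV17,AV20}.

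First, I would verify the hypotheses of this abstract theorem. Since $X$ is $\Q$-factorial klt and the very general fibre is an IHS manifold, which has trivial canonical class, one obtains $K_X\equiv_\pi 0$, and then $K_X\sim_{\R,\pi}0$ by the nonvanishing result recalled right after Definition 1.1. Taking $B=0$, the fibration $(X,0)\to S$ is a klt $K$-trivial fibration. Moreover, projective IHS manifolds are simply connected and satisfy $h^1(\O_F)=0$, so the relative irregularity of $\pi$ vanishes in the sense required by the abstract result.

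Second, the abstract theorem will reduce the conjecture for $X/S$ to an equivariant statement on the very general fibre $F$: existence of a rational polyhedral fundamental domain for the action on $\Nefe{F}$ (respectively $\Move{F}$) of the image in $\mathrm{GL}(N^1(F)_\R)$ of $\Aut(X/S)$ (resp.\ $\PsAut(X/S)$), under the natural identification of $N^1(X/S)_\R$ with a monodromy-invariant subspace of $N^1(F)_\R$. For each of the four known IHS deformation types, Markman and Amerik--Verbitsky prove that such a fundamental domain exists for the full monodromy group acting on the K\"ahler (resp.\ birational K\"ahler, i.e.\ movable) cone; any finite-index subgroup inherits the property, and the image of $\Aut(X/S)$ (resp.\ $\PsAut(X/S)$) has finite index inside the subgroup of the monodromy group stabilising the relative N\'eron--Severi lattice.

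Finally, once the relative movable cone conjecture holds, the finiteness of SQMs over $S$ and the relative nef cone conjecture for each SQM follow by the usual chamber argument: $\Move{X/S}$ is tiled by the rational polyhedral cones $g^*\Nefe{X'/S}$ with $X'$ ranging over $\Q$-factorial SQMs of $X/S$ and $g\in \PsAut(X/S)$, and a rational polyhedral fundamental domain meets only finitely many such chambers. The main technical obstacle lies in the first reduction step: controlling the restriction of $\PsAut(X/S)$ to $\Bir(F)$, and matching $N^1(X/S)_\R$ with the right monodromy-invariant subspace of $N^1(F)_\R$. This is exactly where the vanishing of the relative irregularity enters, ensuring the compatibility between relative and absolute divisor classes that makes the reduction clean.
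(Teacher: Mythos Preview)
Your proposal has a genuine gap. The paper's abstract result (Theorem \ref{thm:main}) is an \emph{equivalence} between the relative movable cone conjecture and the combination of finiteness of SQMs with the relative nef cone conjecture for each SQM; it does not reduce either statement to one on a complex fibre. Your reduction to the very general fibre $F$ rests on the claim that the image of $\PsAut(X/S)$ has finite index in the stabiliser of $N^1(X/S)$ inside the monodromy group of $F$, but this is precisely the obstacle the introduction flags: an automorphism of $F$ need not extend even birationally to $X$ (\cite[Example 1.3(1)]{Ogu24}), and Section \ref{sectionfamilyK3} constructs a family with $\rho(X/C)=\rho(F)$ in which $\Aut(X/C)$ is finite while $\Aut(F)$ is infinite. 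The Markman/Amerik--Verbitsky fundamental domain on $F$ therefore cannot be transported to $X/S$ in the way you suggest without a substantial further argument that you do not supply.

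The paper takes a different route. It passes to the \emph{generic fibre} $X_K$ over the function field $K=\C(S)$, shows it is an irreducible symplectic variety over $K$ (Lemma \ref{lem:IHS}), and invokes Takamatsu's theorem \cite{Tak21} for IHS varieties over non-closed fields; pseudoautomorphisms of $X_K$ spread out to $X/S$, so the extension problem disappears. The transfer from $X_K$ back to $X/S$ uses \cite[Theorem 6.1]{LZ22} together with the equality $\Move{X/S}=\Movp{X/S}$ (Lemma \ref{lem:otherinc} and Proposition \ref{prop:nonvan}(b2)); the latter needs semiampleness of nef divisors on the fibres, which is where the restriction to the four known types enters via Remark \ref{rmk:gmm}(c). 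In particular the paper does not use Amerik--Verbitsky at all: the nef cone conjecture for each SQM and the finiteness of SQMs are deduced from the movable cone conjecture via the geography of models (Proposition \ref{prop:mov_implies_nef}, resting on Theorem \ref{thm:ChoiShokurov}), which is the paper's alternative to the MBM-class approach.
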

For the definition of SQM we refer the reader to Definition \ref{def:SQM}.
Notice that the geometry of an SQM may be very different from that of the initial fibration $X\to S$ and have, for instance, a relative nef cone with infinitely many effective extremal rays while $\Nefe{X/S}$ is rational polyhedral (see e.g.\ Proposition \ref{prop:interesting} below). Theorem \ref{cor:IHS} above is an immediate consequence of the following. 
  \begin{theorem}\label{thm:IHS-intro-gen}
Let $\pi\colon X\to S$  be a fibration between $\Q$-factorial normal projective varieties
such that the total space $X$ is klt. Suppose that the very general fibre  of $\pi$ is a projective IHS manifold such that every nef divisor is semiample.  Then
\begin{enumerate}[(a)]
 \item the relative movable cone conjecture holds for $X$;
\item up to isomorphism over $S$, there are only finitely many $X'$ arising as small $\Q$-factorial modifications (SQM) of $X$ over $S$  and  the 
        relative nef cone conjecture holds for  each of them.
  \end{enumerate}      
 \end{theorem}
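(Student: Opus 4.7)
The plan is to deduce Theorem \ref{thm:IHS-intro-gen} from the general relative cone conjecture results of this paper for klt $K$-trivial fibrations whose very general fibre has vanishing irregularity. The reduction is available in our setting because any projective IHS manifold $F$ satisfies $h^1(F,\Osheaf_F)=0$; consequently the very general fibre of $\pi$ has vanishing irregularity, which is the key hypothesis needed to apply the general descent machinery.

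For part (a), the absolute movable cone conjecture for a projective IHS manifold is a theorem of Markman \cite{Mar11}, who showed that the monodromy group admits a rational polyhedral fundamental domain on the movable cone. Plugging this into the general relative-from-absolute descent statement of the paper (which, under the vanishing irregularity hypothesis, lifts the fibre-wise fundamental domain to one for the action of $\PsAut(X/S,B)$ on $\Move{X/S}$) yields the relative movable cone conjecture for $X/S$.

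For part (b), the assumption that every nef divisor on the fibre is semiample will be used twice. First, it ensures that the movable cone of the fibre is subdivided into the nef cones of its birational IHS models, giving a Mori-type chamber decomposition. Via the identification of $N^1(X/S)_{\R}$ with a monodromy-invariant subspace of $N^1(F)_{\R}$ coming from vanishing irregularity, this induces a chamber decomposition of $\Move{X/S}$ whose chambers correspond to SQMs of $X$ over $S$. Combined with part (a), this yields finitely many $\PsAut(X/S,B)$-orbits of chambers, and since relative pseudoautomorphisms induce isomorphisms of the associated SQMs over $S$, there are only finitely many SQMs up to isomorphism over $S$. Second, for any such SQM $X' \to S$, its very general fibre is again a projective IHS manifold in which every nef divisor is semiample (both properties being preserved under birational maps of IHS manifolds). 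The absolute nef cone conjecture for such a fibre then follows from Amerik-Verbitsky \cite{AV17, AV20} together with the semiampleness hypothesis, and a second application of the general descent theorem gives the relative nef cone conjecture for $X'/S$.

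The main obstacle lies not in the present application but in the general descent theorem that underlies both parts: one must lift monodromy-invariant (pseudo)automorphisms of the very general fibre to genuine relative (pseudo)automorphisms of $X/S$ preserving $B$, and transfer the fundamental domain accordingly. The vanishing of $h^1(\Osheaf_F)$ plays the essential role here, as it kills the relative $\Pic0$ contribution and guarantees that numerical classes on the fibre spread out to actual divisor classes over a neighbourhood in $S$, which is what allows the descent. Once this is in place, the remainder of the argument for IHS fibrations is essentially a matter of quoting Markman and Amerik-Verbitsky.
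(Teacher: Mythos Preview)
Your proposal misidentifies the intermediate object through which the descent is carried out, and this is a genuine gap. The paper does \emph{not} descend from the very general complex fibre $F$; it passes to the \emph{generic fibre} $X_K$ over the function field $K=\C(S)$. This distinction matters: the paper explicitly recalls (citing \cite[Example 1.3(1)]{Ogu24} and exhibiting its own Kummer family in Section~\ref{sectionfamilyK3}) that an automorphism defined on every general fibre need not extend even birationally to $X$. Thus the lifting of fibrewise (pseudo)automorphisms you describe does not exist in general, and Markman's theorem over $\C$ cannot be fed directly into a descent. Instead, the paper shows that $X_K$ is an irreducible symplectic variety over the non-closed field $K$ (Lemma~\ref{lem:IHS}) and invokes Takamatsu's extension \cite[Theorem 1.0.5]{Tak21} of the movable cone conjecture to this setting. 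The point is that pseudoautomorphisms of $X_K$ are, by definition, defined over $K$, hence they automatically spread out to relative pseudoautomorphisms over a nonempty open subset of $S$; the passage from $X_K$ to $X/S$ is then handled by \cite[Proof of Theorem 6.1]{LZ22} together with Proposition~\ref{prop:nonvan} and Looijenga's Lemma~\ref{lem:loo}.

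Your plan for part (b) also diverges from the paper and inherits the same gap. The paper does \emph{not} appeal to Amerik--Verbitsky on the fibre and then descend again; rather, once (a) is established, both the finiteness of SQMs and the relative nef cone conjecture for each SQM are deduced purely in the relative setting from Proposition~\ref{prop:mov_implies_nef} (the implication (a)$\Rightarrow$(b) of Theorem~\ref{thm:main}), which rests on the geography of models (Theorem~\ref{thm:ChoiShokurov}). The hypothesis that nef implies semiample on the fibre is used here to guarantee that good minimal models exist for effective klt pairs on $X_s$ (Remark~\ref{rmk:gmm}(c)), which is exactly the input Proposition~\ref{prop:mov_implies_nef} requires, and also to prove $\Movp{X/S}=\Move{X/S}$ via Proposition~\ref{prop:nonvan}(b2). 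In short: replace ``very general fibre $F$ and Markman/Amerik--Verbitsky'' by ``generic fibre $X_K$ and Takamatsu'', and replace the second descent by the internal implication Theorem~\ref{thm:main}.
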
 
 Amerik and Verbitsky deduced the nef cone conjecture for IHS manifolds from the movable cone conjecture proved by Markman \cite{Mar11}
 by proving the boundedness of squares of primitive MBM classes
 \cite{AV17, AV20}. 
Our approach, due to \cite{GLSW24} (see also \cite[Theorem 13]{Xu24}), provides a general
and alternative way to derive the nef cone conjecture from the movable cone conjecture
by using the so-called geography of models (cf.\ \cite{SC11}). This requires 
the conjectural assumption on the semiampleness of nef divisors (which was proved by \cite{Mats17, MO22,MR21} for the 4 known deformation types of IHS manifolds, providing in passing a new
proof of the nef cone conjecture for these manifolds), but has the advantage of
working in the relative setting. In particular, Theorem \ref{thm:IHS-intro-gen} follows from a combination of results from \cite{Tak21} and \cite{LZ22} together with a relative version of \cite[Theorem 1.5]{GLSW24}, which is the technical core of this paper and of independent interest. To state it in full generality let us make the framework more precise.

\begin{setup} \label{setup-relcone}
In the rest of the text we will consider a klt $K$-trivial fibration
$
\holom{\pi}{(X,B)}{S}
$ 
such that
\begin{itemize}
\item[-] for a very general fibre $X_s$ its irregularity $q(X_s)$ is zero;
\item[-] the base $S$ is $\Q$-factorial quasi-projective variety.
\end{itemize}
\end{setup}

\begin{remark}\label{rem:setup}
In Set-up \ref{setup-relcone}, since $q(X_s)=0$, the higher direct image $R^1 \pi_* \sO_X$ is a torsion sheaf on $S$. In fact, for a klt $K$-trivial fibration we can apply \cite[Theorem 1.2]{Men23} to obtain $R^1\pi_*\sO_X =0$. 
By \cite[Proposition 12.1.4]{KM92} this implies that $N^1(X/S)_{\Q}\simeq \pic(X/S)_{\Q}$.
Since $S$ is $\Q$-factorial, we know by \cite[Proposition 4.4(4)]{LZ22} that the relative movable cone $\Mov{X/S}$ is non-degenerate. This will allow  to apply Looijenga's results \cite{Loo14} (cf.\ Lemma \ref{lem:loo}) in the proof of Theorem \ref{thm:main} below. 
%
\end{remark}

As anticipated above we build upon the recent paper \cite{GLSW24} and give the relation between the relative movable and nef  cone conjectures.

\begin{theorem}\label{thm:main}
In the Set-up \ref{setup-relcone}, assume that 
\begin{equation}\label{eq:weak}
\textrm {good minimal models exist for effective klt pairs on the fibre $X_s$.}\footnote{See Terminology \ref{terminologygood} for a precise definition.}
\end{equation}
Then the following statements are equivalent.
\begin{enumerate}[(a)]
\item The action of $\PsAut(X/S)$ on $\Move{X/S}$ admits a rational polyhedral fundamental domain.
\item \begin{enumerate}
\item For every small $\Q$-factorial modification (SQM) $X'$ of $X$ over $S$, the action of $\Aut(X'/S)$ on $\Nefe{X'/S}$ admits a rational polyhedral fundamental domain.
\item Up to isomorphism over $S$, there are only finitely many SQMs of $X$ over $S$.
\end{enumerate}
\end{enumerate}
\end{theorem}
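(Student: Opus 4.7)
The plan is to adapt the geography-of-models strategy of \cite[Theorem 1.5]{GLSW24} to the relative setting and then combine it with Looijenga's cone-theoretic principle (Lemma \ref{lem:loo}) which converts chamber decompositions into fundamental-domain statements. The backbone is a chamber decomposition of $\Move{X/S}$ by the nef effective cones of the various small $\Q$-factorial modifications of $X$ over $S$, on which $\PsAut(X/S)$ acts by permutation.

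The first step is to establish this chamber decomposition: I want to show that the interior of $\Move{X/S}$ equals the disjoint union $\bigcup \Nef{X'/S}^\circ$ over all SQMs $X\dashrightarrow X'$ of $X$ over $S$, and that every rational class in $\Move{X/S}$ lies in $\Nefe{X'/S}$ for some such $X'$. To produce an $X'$ adapted to a movable class $D$, one runs a relative $D$-MMP on $(X,B)$; since $K_X+B \sim_{\R,\pi} 0$, this is literally a $D$-MMP, and movability of $D$ forces every extremal contraction to be small, yielding an SQM. The hypothesis \eqref{eq:weak} combined with $R^1\pi_*\sO_X = 0$ (Remark \ref{rem:setup}) and the relative MMP machinery of \cite{Tak21, LZ22} is what ensures termination with a good model in the relative setting; these are exactly the pieces of the absolute argument of \cite{GLSW24} that need to be propagated fibrewise.

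The second step analyses the action. Any pseudoautomorphism of $X$ transports through an SQM $X\dashrightarrow X'$ to a pseudoautomorphism of $X'$ over $S$; hence there is a canonical identification $\PsAut(X/S) = \PsAut(X'/S)$ acting on the common Néron--Severi space $N^1(X/S)_\R \cong N^1(X'/S)_\R$. Under this action the family of chambers $\{\Nefe{X'/S}\}$ is permuted, the stabilizer of the chamber associated to $X'$ is precisely $\Aut(X'/S)$, and two SQMs lie in the same $\PsAut(X/S)$-orbit if and only if they are isomorphic over $S$ (because an isomorphism between SQMs of $X$ over $S$ is exactly a pseudoautomorphism of $X$ over $S$).

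With these two ingredients, Looijenga's lemma delivers (a) $\Leftrightarrow$ (b): $\PsAut(X/S)$ admits a rational polyhedral fundamental domain on $\Move{X/S}$ if and only if (i) there are only finitely many $\PsAut(X/S)$-orbits of chambers, i.e.\ finitely many SQMs of $X$ over $S$ up to isomorphism, and (ii) for each representative $X'$ the stabilizer $\Aut(X'/S)$ admits a rational polyhedral fundamental domain on $\Nefe{X'/S}$. The applicability of Looijenga's machinery relies on $\Move{X/S}$ being non-degenerate, which is guaranteed by $\Q$-factoriality of $S$ via \cite[Proposition 4.4(4)]{LZ22}, as recalled in Remark \ref{rem:setup}. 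The hardest step is clearly the first one: the relative geography-of-models decomposition in the $K$-trivial setting, since it requires promoting the fibrewise assumption \eqref{eq:weak} to termination of the relative $D$-MMP with the right structural outcome (an SQM), an issue which is completely absent in the absolute setting treated in \cite{GLSW24}.
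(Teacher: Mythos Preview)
Your proposal is correct and follows the same route as the paper, which carries out the two implications via Propositions~\ref{prop:mov_implies_nef} and~\ref{prop:nef_implies_mov}, resting on the chamber decomposition of Proposition~\ref{prop:decop_eff_and_mov}, the relative geography of models (Theorem~\ref{thm:ChoiShokurov}, Proposition~\ref{prop:finite_decomp}), and Looijenga's lemma. One refinement worth making explicit: in Step~3 Looijenga's lemma alone does not deliver the equivalence---for $(a)\Rightarrow(b)(i)$ one must first know that the intersection of the given movable fundamental domain with each pulled-back nef chamber is again rational polyhedral (this is Theorem~\ref{thm:intersect_nef}, or equivalently the local finiteness in Proposition~\ref{prop:finite_decomp}), and only then can Looijenga be reapplied on each $\Nefe{X'/S}$; the geography of models is thus doing double duty, not just producing the tiling.
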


When $S$ is a point, this result has been proved in \cite[Theorem 1.5(iii)]{GLSW24}
assuming the existence of good minimal models in dimension $\dim(X)$, and in \cite[Theorem 13]{Xu24} assuming the existence of  minimal models and a non-vanishing conjecture in dimension $\dim(X)$.
Notice that the implication ``(a) implies (b)(ii)'' is  proved in \cite[Proposition 5.3(2)]{LZ22}, again under the stronger hypothesis of the existence of good minimal models for all effective klt pairs in dimension $\dim(X)-\dim(S)$. 
Building upon these  inspiring papers, a crucial part of our work consists in weakening their hypotheses to (\ref{eq:weak}), having in mind our application to families of IHS manifolds, for which minimal models exist by \cite{LP16}, and, as recalled above, abundance is proved for the 4 known deformation types.

From \cite[Theorem 1.4]{LZ22} we immediately deduce the following. 
\begin{corollary}\label{cor:K3s}
Let $\pi\colon X\to S$ a fibration in $K3$ surfaces such that $S$ is $\Q$-factorial. Then the action of $\Aut(X/S)$ on $\Nefe{X/S}$ admits a rational polyhedral fundamental domain.
\end{corollary}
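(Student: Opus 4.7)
The plan is to view Corollary \ref{cor:K3s} as a direct application of Theorem \ref{thm:main} combined with the cited relative movable cone result of Li--Zhan. Concretely, I would proceed in four short steps.

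First, verify that the K3 fibration $\pi\colon X\to S$ fits into Set-up \ref{setup-relcone}. Since a projective K3 surface has $K=0$ and vanishing irregularity, the pair $(X,0)\to S$ is a klt $K$-trivial fibration with $q(X_s)=0$, and $S$ is $\Q$-factorial quasi-projective by assumption (the $\Q$-factoriality and kltness of the total space $X$ being part of the implicit hypotheses built into the statement via the notion of fibration used throughout the paper).

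Second, check the MMP hypothesis \eqref{eq:weak}: every effective klt pair on a K3 surface admits a good minimal model. This is classical two-dimensional birational geometry (termination plus abundance hold in dimension $2$), so \eqref{eq:weak} is automatic.

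Third, invoke \cite[Theorem 1.4]{LZ22}, which establishes the relative movable cone conjecture for fibrations in K3 surfaces over a $\Q$-factorial base. This provides exactly condition (a) of Theorem \ref{thm:main}, namely a rational polyhedral fundamental domain for the $\PsAut(X/S)$-action on $\Move{X/S}$.

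Finally, apply Theorem \ref{thm:main} to upgrade (a) into (b): there are only finitely many SQMs $X'$ of $X$ over $S$ up to isomorphism, and for each of them the action of $\Aut(X'/S)$ on $\Nefe{X'/S}$ admits a rational polyhedral fundamental domain. Specialising to the trivial SQM $X'=X$ yields the corollary. The only mildly delicate point is to ensure that the formulation of \cite[Theorem 1.4]{LZ22} literally matches hypothesis (a) of Theorem \ref{thm:main} (same group $\PsAut(X/S)$ acting on the same cone $\Move{X/S}$); if there is a discrepancy it is expected to be a cosmetic translation rather than a genuine obstacle.
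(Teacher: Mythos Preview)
Your proposal is correct and matches the paper's approach: the paper simply says that Corollary \ref{cor:K3s} follows immediately from \cite[Theorem 1.4]{LZ22}, which is precisely the combination you describe of the relative movable cone result of Li--Zhao with Theorem \ref{thm:main} (after noting that good minimal models exist in relative dimension $2$). Your four steps spell out exactly this deduction.
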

This result also appeared in the very recent preprint \cite{MS24} more generally for any klt $K$-trivial fibration of relative dimension $2$, but with a weaker conclusion, see  \cite[Conjecture 1.1(1) and (2)]{MS24} for more details. 

To put this result into perspective we discuss a relevant example:
in Section \ref{sectionfamilyK3} we will 
construct a complete smooth family of K3 surfaces of Kummer type
$$
\holom{\pi}{X}{C}
$$
such that $\rho(X/C)=\rho(F)=17$ with $F$ a very general fibre $F$. 
It is well known that  any Kummer surface contains infinitely many smooth rational curves (see e.g.\ \cite[Example 5]{BHT11}), which, by Kov\'acs' theorem \cite[Theorem 1]{Kov94}, generate the Mori cone of $F$. In particular, the nef cone of $F$ is not polyhedral. 
Yet we can prove that for our family one has the following property.

\begin{proposition} \label{proposition:aut:finite}
The relative automorphism group $Aut(X/C)$ is finite. Thus, by 
Corollary \ref{cor:K3s}, it follows that the $\Nefe{X/C}$ is a rational polyhedral cone.
\end{proposition}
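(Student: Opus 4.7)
The plan is to reduce the finiteness of $\Aut(X/C)$ to the finiteness of a polarized automorphism group of a fibre, exploiting the hypothesis $\rho(X/C)=\rho(F)=17$.

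First I would use that each fibre $X_s$ is a smooth K3 surface with $h^0(X_s,T_{X_s})=0$, so the relative automorphism scheme $\underline{\Aut}(X/C)\to C$ is unramified, and étale since $\pi$ is smooth. Because $C$ is connected, a section of an étale morphism is uniquely determined by its value at any point, so the restriction map $\Aut(X/C)\hookrightarrow \Aut(X_s)$ is injective for every $s\in C$. The hypothesis $\rho(X/C)=\rho(F)$ then forces the restriction $N^1(X/C)_\Q\to N^1(F)_\Q$ to be an isomorphism, so the $\Aut(X/C)$-action on $N^1(X/C)$ is canonically identified with the induced action on $N^1(F)$.

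With these reductions in place, it would suffice to exhibit an $\Aut(X/C)$-invariant ample class $\alpha\in N^1(X/C)$: such a class restricts to an ample class on $F$, and then the image of $\Aut(X/C)\to \Aut(F)$ is contained in the classically finite stabilizer $\Aut(F,\alpha|_F)$ of the polarization, which, combined with the injectivity above, would yield finiteness of $\Aut(X/C)$.

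The main obstacle will be producing this invariant class $\alpha$, which requires a careful analysis of the explicit construction in Section \ref{sectionfamilyK3}. The natural candidate is a $\pi$-ample class arising from a distinguished feature of the family---for instance, a projective embedding canonically determined by the construction, or a marked divisor such as one of the Kummer exceptional classes. Equivalently, I would try to show directly that the $\Aut(X/C)$-orbit of some $\pi$-ample class is finite and then produce an invariant class by averaging. The needed contrast with the infinite $\Aut(F)$ should come from the rigidity imposed by the family being \emph{complete} over $C$: a nontrivial fibrewise automorphism must extend to the whole family, a strong constraint that I expect only a finite group can satisfy for the explicit Kummer family constructed.
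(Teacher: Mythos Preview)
Your reduction is sound: injectivity of $\Aut(X/C)\hookrightarrow\Aut(F)$ via the \'etale automorphism scheme, together with the identification $N^1(X/C)\simeq N^1(F)$, correctly reduces the problem to exhibiting an $\Aut(X/C)$-invariant $\pi$-ample class. This is also how the paper's argument concludes. But you stop precisely where the actual content begins. Your candidates for an invariant class do not work as stated: the Kummer exceptional classes $E_i$ are not ample, and there is no a priori reason an automorphism of $X$ should respect the Kummer structure $\mu\colon X\to A/\pm1$ or any ``canonical'' projective embedding --- indeed, $\Aut(F)$ is infinite and does \emph{not} respect these structures, so you need an argument that genuinely uses the global threefold. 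Your averaging idea is circular, and the appeal to ``rigidity of complete families'' is not an argument.

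The paper fills this gap with a specific invariant of the threefold that has no fibrewise analogue: the Chern class $c_2(X)\in N_1(X)$. One computes (Lemma~\ref{lemmaEnegative}) that $E_i\cdot c_2(X)=-2\deg M<0$, while every other prime divisor $S\subset X$ satisfies $S\cdot c_2(X)\geq 0$ (Proposition~\ref{proposition-negative-surfaces}, which uses the semistability of the Hodge bundle $V$ on $C$). Since any $g\in\Aut(X)$ fixes $c_2(X)$, it must permute the sixteen $E_i$, giving a homomorphism $\Aut(X)\to S_{16}$. An element of the kernel then fixes each $E_i$ and the fibre class $F$; since these together with $\mu^*H$ span $N^1(X)$, one checks $g^*\mu^*H=\mu^*H$, so the kernel acts trivially on $N^1(X)$ and is therefore finite. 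This $c_2$-trick is the missing idea in your proposal, and it is not a formality: it requires the explicit Chern-class computation and the positivity input from Proposition~\ref{propositionVsemistable}.
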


By way of contrast we will see in Subsection \ref{subsectionSQM}
that  the relative pseudoeffective cone
has infinitely many extremal rays and $\Bir(X/C)$ is infinite.

{\bf Acknowledgments.} We thank the authors of \cite{GLSW24} for sharing with us a preliminary version of their paper, and B. Cadorel, Y. Brunebarbe and Y. Deng for their help with Lemma \ref{lem:cadorel}, respectively Proposition \ref{propositionVsemistable}.  We thank G. Ancona, V. Lazi\'c and G. Mongardi for useful remarks on a preliminary version of the paper.  AH and GP are partially supported by the project ANR-23-CE40-0026 ``Positivity on K-trivial varieties''.
AH was supported by the France 2030 investment plan managed by the National Research Agency (ANR), as part of the Initiative of Excellence of Université Côte d’Azur under reference number ANR-15-IDEX-01.

\section{Notation and basic definitions}

We work over $\C$, for general definitions we refer to \cite{Har77}. 
We recommend Lazarsfeld's books for notions of positivity of $\R$-divisors, in particular
\cite[Section 2.2]{Laz04a} for pseudoeffectivity and the associated cones.
In this paper a fibration is a surjective projective morphism with connected fibres between normal quasi-projective varieties.

{\bf Notational convention:} the letter $\pi$ will always be used for structural morphism, e.g.\
$$
\holom{\pi}{X}{S},\ \holom{\pi_Y}{Y}{S},\ \holom{\pi_i}{Y_i}{S}, \ldots
$$

We use the terminology of \cite{KM98} for birational geometry. Note
however that we systematically work with boundary divisors with $\R$-coefficients, 
since this degree of generality is needed for Sections \ref{s:geo} and \ref{sec:equiv}.


\subsection{The relative setting}

Let $\pi\colon X\to S$ be a fibration between $\Q$-factorial normal varieties. 
We say that two divisors $D_1$ and $D_2$ on $X$ are $\pi$-linearly 
equivalent if there exists a divisor $B$ on $S$ such that $D_1 \sim D_2+\pi^*B$. In this case we will use the notation $D_1 \sim_\pi D_2$ or $D_1\sim_S D_2$. 

We denote by $\pic(X/S)$ the relative Picard group and for 
$\mathbb K = \Q$ or $\R$ we set
$$
\pic(X/S)_{\mathbb K}\coloneqq \pic(X/S)\otimes_{\Z} \mathbb K.
$$

\begin{definition}
Let $\holom{\pi}{X}{S}$ be a fibration between normal $\Q$-factorial varieties.
A $\Q$-divisor $D$ is
\begin{itemize}
\item[-] $\pi$-effective if $\pi_* \sO_X(mD)$ has positive rank for some $m \in \N$ sufficiently divisible.
\item[-] $\pi$-big if for some  sufficiently divisible $m \in \N$, the natural map
$$
{\textrm ev}:\pi^* \pi_* \sO_X(mD) \rightarrow \sO_X(mD)
$$
induces a rational map $X \dashrightarrow Y$ that is birational.
\item[-] $\pi$-movable if 
$$
 \textrm{codim}({\textrm Supp} \big( {\textrm coker }({\textrm ev}))\big)>1. 
$$
\end{itemize}
An $\R$-divisor $D$ is
$\pi$-effective (resp. $\pi$-big, resp.\ $\pi$-movable) if $D= \sum_{j} a_j D_j$ where the $D_j$ are 
  $\pi$-effective (resp.\ $\pi$-big, resp.\ $\pi$-movable) $\Q$-divisors and the
$a_j \in \R^+$.
\end{definition}

One checks easily that a $\Q$-divisor divisor is $\pi$-effective (resp.\ $\pi$-big) if its restriction to the generic fibre of $\pi$ is effective (resp.\ big). 

\begin{lemma}\label{lem:rel_to_abs}
Let $\holom{\pi}{X}{S}$ be a fibration between normal $\Q$-factorial varieties.
Let $D$ an $\R$-divisor on $X$. Then the following hold:
\begin{enumerate}[(a)]
\item If $D$ is $\pi$-effective, there exists $D' \sim_{\R, \pi} D$ that is effective.
\item If $D$ is $\pi$-ample, there exists $D' \sim_{\R, \pi} D$ that is ample.
\item If $D$ is $\pi$-big, there exists $D' \sim_{\R, \pi} D$ that is big.
\end{enumerate}
\end{lemma}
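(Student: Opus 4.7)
The plan is to prove (a) first, as it carries the main content, and then deduce (b) and (c) via standard relative positivity results.

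For (a), by linearity I reduce to the case where $D$ is a $\pi$-effective $\Q$-Cartier divisor. After passing to a sufficiently divisible multiple I may assume that $\pi_{*}\sO_{X}(D)$ has positive rank, so $\sO_{X}(D)$ admits a nonzero rational section $s$ that is regular on $\pi^{-1}(U)$ for some dense open $U\subset S$. Set $D'' := D + (s) \sim D$; this is effective over the generic point of $S$. Any prime component $E$ of $D''$ appearing with negative coefficient is therefore $\pi$-vertical, and applying the fibre-dimension theorem to $\pi|_{E}$ shows that $W := \overline{\pi(E)}$ is a prime divisor in $S$. Because $S$ is $\Q$-factorial, $W$ is $\Q$-Cartier and $E$ appears with strictly positive multiplicity in $\pi^{*}W$. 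Collecting the finitely many such pairs $(E_i, W_i)$ and choosing positive rational coefficients $n_i$ large enough, the divisor $D'' + \sum_i n_i \pi^{*}W_i$ is globally effective and $\pi$-linearly equivalent to $D$.

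Part (b) follows from the classical fact that for a projective fibration and an ample divisor $H$ on the (quasi-projective) base $S$, the divisor $D + m\pi^{*}H$ is ample for $m \gg 0$ whenever $D$ is $\pi$-ample; this is \cite[Section 1.7]{Laz04a}, the $\R$-divisor case being obtained from the $\Q$-case by unwinding the definition. Since $D + m\pi^{*}H \sim_{\R,\pi} D$, this gives (b). For (c), I would write $D \sim_{\R,\pi} A + E$ with $A$ a $\pi$-ample $\R$-divisor and $E$ a $\pi$-effective $\R$-divisor by applying the classical Kodaira lemma on the generic fibre $X_{\eta}$ and spreading out the resulting relation. Then (a) supplies $E' \sim_{\R,\pi} E$ effective and (b) supplies $A' \sim_{\R,\pi} A$ ample, so that $D \sim_{\R,\pi} A' + E'$ is ample plus effective, hence big.

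The step I expect to be the main obstacle is the vertical bookkeeping in (a): one has to verify that the image of each negative prime component is divisorial in $S$ (so that the $\Q$-factoriality hypothesis can be used to turn it into a pullback) and that the multiplicity of such a component in the corresponding $\pi^{*}W$ is strictly positive. Both points rely on the combination of $\pi$ having connected fibres and the $\Q$-factoriality of the base, which are built into the hypotheses.
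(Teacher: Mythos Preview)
Your argument for (a) has a gap precisely at the point you flagged. The claim that $W := \overline{\pi(E)}$ is a prime divisor in $S$ is false when $\pi$ is not equidimensional: a divisorial component of a jumping fibre can map to a locus of codimension $\geq 2$. For a concrete instance, let $\sigma\colon S' \to S$ be the blow-up of a smooth surface at a point $p$ with exceptional curve $F$, take $X = S' \times C$ for a smooth projective curve $C$, and let $\pi$ be the composition $X \to S' \to S$; then $E = F \times C$ is a prime divisor with $\pi(E) = \{p\}$. All hypotheses of the lemma hold, and neither connectedness of fibres nor $\Q$-factoriality of $S$ rescues the claim. The fibre-dimension theorem only bounds $\dim W$ from below via the generic fibre dimension of $\pi|_E$, but that generic fibre sits inside a special fibre of $\pi$, whose dimension is not controlled.

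Your approach is easily repaired: once you know the negative components of $D''$ lie over $S \setminus U$, shrink $U$ so that its complement is the support of a single effective Cartier divisor $W$ on $S$ (possible since $S$ is quasi-projective). Every negative prime component $E$ is then contained in $\supp(\pi^* W)$, hence appears in $\pi^* W$ with positive multiplicity, and a large enough multiple of $\pi^* W$ absorbs all the negativity. The paper bypasses this vertical bookkeeping entirely: since $\pi_* \sO_X(mD)$ has positive rank, for $H$ sufficiently ample on $S$ the sheaf $\pi_* \sO_X(mD) \otimes H$ has a nonzero global section (asymptotic Riemann--Roch), so $mD + \pi^* H$ is already linearly equivalent to an effective divisor on $X$. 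Your treatments of (b) and (c) match the paper's.
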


\begin{remark}\label{rem:relnefcase}
Note that the statement does not hold for $\pi$-nef or $\pi$-pseudoeffective classes, cf.\ \cite[Example 1.46]{KM98} for a counterexample.
\end{remark}

\begin{proof}[Proof of Lemma \ref{lem:rel_to_abs}]
Note that the first item immediately reduces to the $\Z$-divisor case:
if $D = \sum_j a_j D_j$, where the $D_j$ are $\pi$-effective divisors, then
$D_j  \sim_{\Q, \pi} D_j'$ with $D_j' \geq 0$ implies that
$$
D = \sum_j a_j D_j \sim_{\R, \pi} a_j D_j' \geq 0.
$$
Now assume that $D$ is a $\Z$-divisor.
Choose $m \in \N$ such that the direct image
$\pi_* \sO_X(mD)$ has rank at least one.  By asymptotic Riemann-Roch (see \cite[Example 1.2.19]{Laz04a} we know that for some sufficiently ample line bundle $H$ the sheaf
$\pi_* \sO_X(mD) \otimes H$ has a global section, so $D' \coloneqq D + \pi^* \frac{1}{m} H$ is $\Q$-effective.

The second item is \cite[Proposition 1.45]{KM98}. 

The third item follows from the other two and from the fact that $D \sim_{\Q, \pi} A + E$ with $A$ relatively ample and $E$ a $\pi$-effective divisor. Indeed, given a $\pi$-big divisor
$D$ and a relatively ample divisor $H$, we can find $\varepsilon>0$ such that
$(D-\varepsilon H)_{|F}$ is effective for a general fibre $F$. Thus $D-\varepsilon H$ is $\pi$-effective.
\end{proof}

\begin{definition}
Let $X$ be a normal $\Q$-factorial variety, and let $\holom{\pi}{X}{S}$ be a fibration. Two $\R$-divisors $D_1$ and  $D_2$ on $X$ are $\pi$-numerically equivalent
(notation $D_1 \equiv_\pi D_2$ or $D_1\equiv_S D_2$) if $D_1 \cdot C = D_2 \cdot C$ for every curve $C \subset X$ such that $\pi(C)$ is a point. 
\end{definition}

We denote by $N^1(X/S)$ the set of numerical equivalence classes and for 
$\mathbb K = \Q$ or $\R$ we set
$$
N^1(X/S)_{\mathbb K}\coloneqq N^1(X/S)\otimes_{\Z} \mathbb K.
$$
We have a natural surjection
$N^1(X) \rightarrow N^1(X/S)$
and a sequence
$$
0 \rightarrow \pi^* N^1(S) \rightarrow N^1(X) \rightarrow N^1(X/S) \rightarrow 0
$$
that is not necessarily exact in the middle (e.g.\ if $S$ is an elliptic curve and $X=S \times S$). 

We are now ready to define the cones inside $N^1(X/S)_\R$ we will be concerned with:
\begin{enumerate}
\item[-] $\Eff {X/S}$ is the cone generated by $\pi$-effective divisors, $\EffO {X/S}$ its interior and $\overline{\textrm{Eff}} (X/S)$ its closure. 
\item[-] $\textrm{Mov} (X/S)$ is the cone generated by $\pi$-movable divisors and $\Mov {X/S}$ its closure. We set 
$$
\Move {X/S}\coloneqq\Mov{X/S}\cap \Eff{X/S}
$$
and 
$$
\Movp{X/S}\coloneqq\textrm{conv} \big( \Mov{X/S}\cap N^1(X/S)_\Q \big).
$$ 
\item[-] $\Amp{X/S}$  is the cone generated by $\pi$-ample divisors and
$\Nef {X/S}$ its closure. We set 
$$
\Nefe{X/S}\coloneqq\Nef{X/S}\cap \Eff{X/S}
$$ 
and 
$$
\Nefp {X/S}\coloneqq\textrm{conv} \big( \Nef{X/S}\cap N^1(X/S)_\Q \big).
$$ 
\end{enumerate}

\begin{remark}\label{rem:interior}
    Let $\pi\colon X\to S$ be a fibration. Then $\Amp{X/S}$ is the interior of $\Nef{X/S}$, and $\Bigcone{X/S}$ is the interior of $\overline{\textrm{Eff}}(X/S)$. 
This is clear since given a class $[D]$ in the interior of the cone, $[D]-\varepsilon [H]$ with $[H]\in \Amp{X/S}$  (resp.\ with $[H]\in \Bigcone{X/S}$)
is still in the cone for some $\varepsilon>0$. Now we can use the results from the absolute case on the fibres.
\end{remark}

\subsection{Rational maps}\label{subsec:maps}

Let $\merom{\mu}{X}{Y}$ be a rational map between normal projective varieties and let
$\holom{\mu_0}{X_0}{Y}$ be the restriction to the locus $X_0 \subset X$, where $\mu$ is well-defined. 
The graph $\Gamma_\mu \subset X \times Y$ of $\mu$ is defined as the closure of the graph of $\mu_0$,
in particular it is a (not necessarily normal) projective variety.

Given such a rational map  $\merom{\mu}{X}{Y}$ between normal projective $\Q$-factorial varieties, denote by $\holom{p_X}{\Gamma_\mu}{X}$
$\holom{p_Y}{\Gamma_\mu}{Y}$ the projections. Then we define the pull-back and push-forward
$$
\mu_* \coloneqq \ClQ{X} \rightarrow \ClQ{Y}, \ D \ \mapsto  \ (p_Y)_* p_X^* D
$$
and pull-back
$$
\mu^* \coloneqq \ClQ{Y} \rightarrow \ClQ{X}, \ D \ \mapsto  \ (p_X)_* p_Y^* D.
$$
In general the maps are not well-behaved with respect to the composition of rational maps.

\begin{definition}
A birational map $\merom{\mu}{X}{Y}$ between normal projective varieties is a contraction if the inverse $\mu^{-1}$ does not contract a divisor.
\end{definition}

\begin{definition}
Let $\holom{\pi}{X}{S}$ and $\holom{\pi_Y}{Y}{S}$ be  fibrations. We say that a rational
map $\merom{\mu}{X}{Y}$ is a map over $S$ if this holds on the locus where $\mu$ is well-defined.
A map ${\mu}:{X/S}\dashrightarrow{Y/S}$ is a birational contraction over $S$ if it is a map over $S$ which is a contraction between $X$ and $Y$.
\end{definition}

\begin{remark} \label{remark-graph}
In the situation above, let $X_0 \subset X$ be the locus where $\mu$ is well-defined. Denote by $\widetilde{X \times_S Y}$ the unique irreducible component of $X \times_S Y$ that contains $X_0 \times_S Y$.
Then $\merom{\mu}{X}{Y}$ is a rational map over $S$ if and only if its graph 
$\Gamma_\mu \subset X \times Y$ is contained in $\widetilde{X \times_S Y}$: this is clear on $X_0$, the statement follows by taking the closure.  

Note that this implies that for a birational morphism over $S$, the projections $\holom{p_X}{\Gamma_\mu}{X}$
and $\holom{p_Y}{\Gamma_\mu}{Y}$ commute with the natural maps, i.e.\ we have
$$
\pi \circ p_X = \pi_Y \circ p_Y.
$$
\end{remark}

\begin{definition}\label{def:SQM}
Let $(X_i,B_i)$ with $i\in\{1,2\}$ be two klt pairs.
Let  $\mu\colon X_1\dashrightarrow X_2$ be a birational contraction over $S$ of  normal $\Q$-factorial projective varieties. We say that the birational map $\mu$ is a small $\Q$-factorial modification (SQM) of $(X_1,B_1)$ over $S$ if $\mu$ is an isomorphism in codimension one which is a morphism over $S$ such that $\mu_* B_1 = B_2$.
\end{definition}

\begin{definition} \label{def:psaut}
Let $\holom{\pi}{(X,B)}{S}$ be a fibration between normal quasi-projective varieties. A pseudoautomorphism of $(X,B)$
is a birational map 
$$
g \colon X \dashrightarrow X
$$
over $S$ that is an isomorphism in codimension one and $g_* B = B$. The set of pseudoautomorphisms
forms a group which we denote by $\PsAut(X/S, B)$.
\end{definition}

We now come a central definition of this paper:

\begin{definition} \label{definition-chambres}
Let ${\mu}\colon {X/S}\dashrightarrow{Y/S}$ be a birational contraction over $S$  between normal $\Q$-factorial projective varieties.
The relative Mori chamber for $\mu$ is defined as 
$$
\Eff{X/S, \mu} \coloneqq \mu^* \Nefe{Y/S} + \sum_{E \in\textrm{Exc}(\mu)}  \R^+ E.
$$
\end{definition}

\begin{remark*}
The relative Mori chamber $\Eff{X/S, \mu}$ is contained in the relative effective 
cone $\Eff{X/S}$: indeed by Lemma \ref{lem:rel_to_abs} any class $[D] \in  \Nefe{X/S}$
can be represented by an effective divisor. In view of the definition of $\mu$
it is clear that $\mu^* D$ is still effective. 
\end{remark*}

We prove a relative version of a regularity criterion due to Hu and Keel \cite[Lemma 1.7]{HK00} in the absolute case (see \cite[Proof of Lemma 4.3]{GLSW24} for more details). 

\begin{lemma} \label{lemma-regular}
Let ${\mu_i}\colon {X/S}\dashrightarrow{Y_i/S}$ with $i\in\{1,2\}$ be two birational contractions over $S$   between normal $\Q$-factorial projective varieties.
Suppose that there exist $\R$-divisors
\begin{itemize}
\item[-]$D_1$ on $Y_1$ that is $\pi_1$-ample
\item[-] $D_2$ on $Y_2$ that is $\pi_2$-nef
\end{itemize}
such that
$$
\mu_1^* D_1 + E_1 \equiv_\pi \mu_2^* D_2 + E_2,
$$
where the $E_i$ are effective $\mu_i$-exceptional $\R$-divisors. Then the composition
$\mu_1 \circ \mu_2^{-1}$ extends to a regular birational morphism $Y_2 \rightarrow Y_1$.
\end{lemma}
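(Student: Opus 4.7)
My plan is to adapt the absolute argument of Hu--Keel \cite[Lemma 1.7]{HK00} to the relative setting, combining a reduction to absolute ampleness of $D_1$ via Lemma~\ref{lem:rel_to_abs} with a common-resolution analysis and the rigidity lemma.

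First I would reduce to the case where $D_1$ is absolutely ample. By Lemma~\ref{lem:rel_to_abs}(b), there exists an ample $\R$-divisor $A$ on $S$ such that $D_1+\pi_1^*A$ is ample on $Y_1$. Replacing $D_1$ by $D_1+\pi_1^*A$ and $D_2$ by $D_2+\pi_2^*A$ preserves the relative numerical equivalence in the hypothesis (both sides gain the common class $\pi^*A$, since $\mu_i^*\pi_i^*A=\pi^*A$), and $D_2+\pi_2^*A$ remains $\pi_2$-nef. We may thus assume $D_1$ is ample on $Y_1$, while $D_2$ remains only relatively nef, in line with Remark~\ref{rem:relnefcase}.

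Next I would pass to a common resolution: a normal $\Q$-factorial projective variety $W$ with birational morphisms $q\colon W\to X$ and $p_i\colon W\to Y_i$ over $S$, with $p_i=\mu_i\circ q$ on a dense open subset. Pulling the hypothesis back via $q$ and absorbing the $q$-exceptional corrections arising between $q^*\mu_i^*D_i$ and $p_i^*D_i$ into the exceptional parts, one obtains on $W$ a relation
$$
p_1^*D_1 + F_1 \ \equiv_{\pi\circ q}\ p_2^*D_2 + F_2,
$$
with $F_i$ effective and $p_i$-exceptional. By the rigidity lemma, it then suffices to show that every irreducible curve $C\subset W$ contracted by $p_2$ is also contracted by $p_1$, since then $p_1$ factors through $p_2$ and provides the desired extension of $\mu_1\circ\mu_2^{-1}$ to a morphism $Y_2\to Y_1$. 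For such a $C$ we have $p_2^*D_2\cdot C=0$ and $C$ is vertical over $S$ (because $(\pi\circ q)(C)=\pi_2(p_2(C))$ is a point), so the displayed relation yields
$$
p_1^*D_1\cdot C = F_2\cdot C - F_1\cdot C.
$$
Since $D_1$ is ample, $p_1^*D_1\cdot C\ge 0$ with equality iff $p_1(C)$ is a point. The argument is then completed by showing $F_2\cdot C\le F_1\cdot C$ via the negativity lemma applied to the effective $p_i$-exceptional divisors $F_i$.

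The main obstacle lies precisely in this last sign analysis and in the careful bookkeeping of exceptional divisors on $W$. The negativity lemma in its classical form asserts the existence of some $p_i$-contracted curve along which $F_i$ is negative, not a sign statement for a prescribed curve; so to extract $F_1\cdot C\ge F_2\cdot C$ for our given $p_2$-contracted $C$, one must either argue irreducible-component-by-component in the supports of the $F_i$, or perturb $F_1,F_2$ within their numerical classes by small effective $p_i$-exceptional divisors avoiding $C$. This is the relative, $\R$-divisor analogue of the corresponding step in the absolute Hu--Keel argument, and is where most of the technical work will go.
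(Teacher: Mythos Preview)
Your overall architecture matches the paper's: pass to a common model, reduce to $D_1$ absolutely ample via Lemma~\ref{lem:rel_to_abs}, and conclude with the rigidity lemma once every $p_2$-contracted curve is shown to be $p_1$-contracted. The difference is precisely at the step you flag as the ``main obstacle''. Your plan to prove $F_2\cdot C\le F_1\cdot C$ curve-by-curve via the negativity lemma is not wrong in spirit, but as you note the negativity lemma does not hand you inequalities along a prescribed curve, and the component-by-component or perturbation arguments you sketch would be delicate to make watertight for $\R$-divisors with mixed $p_1$/$p_2$-exceptional supports.

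The paper sidesteps this difficulty entirely. Before the curve analysis, it isolates the one genuinely relative ingredient as a separate observation (Lemma~\ref{lemma-diagram}): since $\mu_1,\mu_2$ are maps over $S$, any curve contracted by $p_1$ is vertical over $S$, hence lies in a $\pi_2$-fibre, so $p_2^*D_2$ is $p_1$-nef; by symmetry $p_1^*D_1$ is $p_2$-nef. With this in hand, the negativity lemma is applied \emph{globally} exactly as in \cite[Lemma 4.3]{GLSW24} to conclude the clean numerical identity
\[
p_1^*D_1 \ \equiv_{\pi\circ p}\ p_2^*D_2,
\]
eliminating the exceptional parts altogether rather than bounding their intersection numbers. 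The curve step then becomes a one-liner: for $C$ contracted by $p_2$ one has $p_1^*D_1\cdot C=p_2^*D_2\cdot C=0$, and ampleness of $D_1$ forces $p_1(C)$ to be a point. So the missing idea in your proposal is to aim for the equivalence $p_1^*D_1\equiv p_2^*D_2$ via the mutual relative nefness of the pullbacks, rather than an inequality along a single curve.
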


Note that the statement is not a direct consequence of \cite[Lemma 4.3]{GLSW24} since, as recalled in Remark \ref{rem:relnefcase}, in general 
it is not possible to replace $D_2$ by a $D_2' \equiv_{\pi_2} D_2$ such that $D_2'$ is nef (and not just relatively nef).
However we will see that the proof adapts without too much difficulty. We start with a preliminary lemma:

\begin{lemma} \label{lemma-diagram}
In the situation of Lemma \ref{lemma-regular}, let $\Gamma$ be the normalisation of the graph of the birational map
$\mu_1 \circ \mu_2^{-1}$, and let 
$$
\holom{p_i}{\Gamma}{Y_i}, \qquad \holom{p}{\Gamma}{X}
$$
be the projections. Then $p_2^* D_2$ is $p_1$-nef and $p_1^* D_1$ is $p_2$-nef.
\end{lemma}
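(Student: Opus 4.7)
The plan is to observe that, despite the statement referring to the full set-up of Lemma \ref{lemma-regular}, the conclusion uses nothing more than the relative nefness of $D_1$ and $D_2$ on $Y_1$ and $Y_2$ together with the commutativity of all the maps over $S$. In particular, the numerical equation $\mu_1^\ast D_1 + E_1 \equiv_\pi \mu_2^\ast D_2 + E_2$ and the exceptional divisors $E_i$ do not enter here; they will only be exploited in the proof of Lemma \ref{lemma-regular} itself.

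First I would fix notation: since $\mu_1$ and $\mu_2$ are defined over $S$ and the three projections $p$, $p_1$, $p_2$ are birational, we obtain a common structural morphism
\[
  \pi_\Gamma \;=\; \pi\circ p \;=\; \pi_1\circ p_1 \;=\; \pi_2\circ p_2 \colon \Gamma \to S.
\]
To make sense of this literally (and in particular to justify the existence of $p\colon\Gamma\to X$), one takes $\Gamma$ to be the normalisation of the closure in $X\times_S Y_1\times_S Y_2$ of the locus where $\mu_1$ and $\mu_2$ are both defined; this is the natural joint graph and the compatibilities $\pi\circ p=\pi_i\circ p_i$ hold by Remark \ref{remark-graph}.

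I would then prove the stronger assertion that $p_i^\ast D_i$ is $\pi_\Gamma$-nef for $i=1,2$. This suffices, because any curve contracted by $p_j$ with $j\neq i$ is automatically contracted by $\pi_\Gamma=\pi_j\circ p_j$, so $\pi_\Gamma$-nefness implies $p_j$-nefness. For the $\pi_\Gamma$-nefness, given an irreducible curve $C\subset\Gamma$ with $\pi_\Gamma(C)$ a point, there are two cases. Either $C$ is $p_i$-exceptional, in which case $(p_i)_\ast C=0$ and the projection formula yields $p_i^\ast D_i\cdot C=0$; or $(p_i)_\ast C$ is a positive multiple of a curve $C_i\subset Y_i$ whose image under $\pi_i$ equals $\pi_\Gamma(C)$ and is therefore a point, so $C_i$ is contained in a fibre of $\pi_i$. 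In this second case the $\pi_i$-nefness of $D_i$ gives $D_i\cdot(p_i)_\ast C\geq 0$, and applying the projection formula once more yields $p_i^\ast D_i\cdot C\geq 0$.

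The only point requiring any care is the identification of $\Gamma$ and the resulting commutativities; once these are in place, the rest is a one-line projection formula argument and there is no substantial obstacle. The role of the additional data present in the set-up of Lemma \ref{lemma-regular} (the ampleness of $D_1$, the effective exceptional divisors $E_i$, and the numerical relation between them) is merely to provide the context in which Lemma \ref{lemma-diagram} will subsequently be used.
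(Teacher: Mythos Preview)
Your proposal is correct and follows essentially the same route as the paper: both arguments reduce to the commutativity $\pi_1\circ p_1=\pi_2\circ p_2$ over $S$ (the paper invokes Remark \ref{remark-graph} for this), so that a curve contracted by $p_j$ has image in a $\pi_i$-fibre and the $\pi_i$-nefness of $D_i$ finishes via the projection formula. Your intermediate step of proving $\pi_\Gamma$-nefness is a mild and harmless strengthening; the paper argues directly that $p_1$-contracted curves land in $\pi_2$-fibres, but the content is the same.
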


\begin{proof} By symmetry it is sufficient to show the statement for $D_2$.
This is equivalent to showing that if $C \subset \Gamma$ is an irreducible curve such that
$p_1(C)$ is a point, then $p_2(C)$ is contained in a fibre of $\pi_2$.  Yet this is clear:
since $\mu_1$ and $\mu_2$ are birational maps over $S$,
so is the map $\mu_1 \circ \mu_2^{-1}$. By Remark \ref{remark-graph} we therefore
have $\pi_1 \circ p_1 = \pi_2 \circ p_2$. Thus if $p_1(C)$ is a point, then
so is $\pi_2\big(p_2(C)\big)$. In other words $p_2(C)$ is contained in a $\pi_2$-fibre.
\end{proof}

\begin{proof}[Proof of Lemma \ref{lemma-regular}]
Let $\Gamma$ be the normalisation of the graph of the birational map
$\mu_1 \circ \mu_2^{-1}$, and let 
$$
\holom{p_i}{\Gamma}{Y_i} \text{ for }i\in\{1,2\}, \qquad \holom{p}{\Gamma}{X}
$$
be the projections. By Lemma \ref{lemma-diagram} we know that $p_2^* D_2$ is $p_1$-nef and $p_1^* D_1$ is $p_2$-nef.
Now we can just copy the first part of the proof of \cite[Lemma 4.3]{GLSW24} to see that
$p_1^* D_1 \equiv_{\pi \circ p} p_2^* D_2$. Indeed, the key point (application of the negativity lemma) only uses the relative nefness established in Lemma \ref{lemma-diagram}.
By Lemma \ref{lem:rel_to_abs} we can assume, without loss of generality, that $D_1$ is ample. Let now $C$ be any curve contracted by $p_2$. Then 
$$
D_1 \cdot p_1(C) = p_1^* D_1 \cdot C = p_2^* D_2 \cdot C = 0
$$
shows that $C$ is contracted by $p_1$. Thus $p_1$ contracts the $p_2$-fibres, and the rigidity lemma \cite[Lemma 1.6]{KM98} yields a morphism $\tau\colon Y_2 \rightarrow Y_1$ such that
$\tau \circ p_2=p_1$. By construction this morphism extends the rational map
$\mu_1 \circ \mu_2$.
\end{proof}

\begin{definition}
Let ${\mu_i}\colon {X/S}\dashrightarrow{Y_i/S}$ with $i\in\{1,2\}$ be two birational contractions over $S$ between normal $\Q$-factorial projective varieties.
We say that the two contractions are isomorphic if there exists an isomorphism
\holom{\varphi}{Y_1}{Y_2} over $S$ such that $\mu_2=\varphi\circ \mu_1$.
\end{definition}

\begin{lemma}\label{lem:isom_MoriChamber}
Let ${\mu_i}\colon {X/S}\dashrightarrow{Y_i/S}$ with $i\in\{1,2\}$ be two birational contractions over $S$ between normal $\Q$-factorial projective varieties.
Then the following are equivalent:
\begin{enumerate}[(a)]
\item $\mu_1$ is isomorphic to $\mu_2$;
\item The interiors of the Mori chambers intersect, i.e.\ we have
$$
\EffO{X/S, \mu_1} \cap \EffO{X/S, \mu_2} \neq \emptyset.
$$ 
 \end{enumerate}
\end{lemma}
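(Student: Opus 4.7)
I would treat the two implications separately, with essentially all the work concentrated in the implication (b)$\Rightarrow$(a) via Lemma~\ref{lemma-regular}.

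For (a)$\Rightarrow$(b), I would simply observe that an isomorphism $\varphi\colon Y_1\to Y_2$ over $S$ with $\mu_2=\varphi\circ\mu_1$ identifies $\Exc(\mu_1)=\Exc(\mu_2)$ as divisors on $X$ and yields $\mu_2^*=\mu_1^*\circ\varphi^*$, where $\varphi^*$ sends $\Nefe{Y_2/S}$ bijectively onto $\Nefe{Y_1/S}$. Hence $\Eff{X/S,\mu_1}=\Eff{X/S,\mu_2}$, and this common cone has nonempty interior: for any $\pi_1$-ample class $[A]$ on $Y_1$ and any small $\varepsilon>0$, the class $\mu_1^*[A]+\varepsilon\sum_{E\in\Exc(\mu_1)}[E]$ is an interior point.

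For (b)$\Rightarrow$(a), choose $[D]\in\EffO{X/S,\mu_1}\cap\EffO{X/S,\mu_2}$. The first and principal technical step is to upgrade $[D]$ to presentations
\[
D\equiv_\pi \mu_i^*A_i+E_i,\qquad i=1,2,
\]
where each $A_i$ is genuinely $\pi_i$-\emph{ample} and each $E_i$ is an effective $\R$-divisor with strictly positive coefficient on every irreducible component of $\Exc(\mu_i)$. This rests on a standard convex-geometric fact: every interior point $v$ of a Minkowski sum $C_1+C_2$ of two closed convex cones admits a decomposition $v=v_1+v_2$ with each $v_j$ in the relative interior of $C_j$ (one subtracts a small multiple of a fixed pair of interior representatives and adds it back to each summand). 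Applied here to $C_1=\mu_i^*\Nefe{Y_i/S}$, whose relative interior is the image of $\Amp{Y_i/S}$ (using that $\Amp{Y_i/S}$ is the interior of $\Nefe{Y_i/S}$, because ample divisors are big hence effective, and that $\mu_i^*$ is injective on $N^1$), and to $C_2=\sum_{E\in\Exc(\mu_i)}\R^+[E]$, this yields the desired presentation.

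With these presentations in hand, the identity $\mu_1^*A_1+E_1\equiv_\pi\mu_2^*A_2+E_2$ is exactly the hypothesis of Lemma~\ref{lemma-regular}, which I would apply with $(D_1,D_2)=(A_1,A_2)$ (using that $A_1$ is $\pi_1$-ample and $A_2$ is $\pi_2$-nef) to extend $\mu_1\circ\mu_2^{-1}$ to a morphism $Y_2\to Y_1$ over $S$, and symmetrically with the roles of $Y_1$ and $Y_2$ swapped to extend $\mu_2\circ\mu_1^{-1}$ to a morphism $Y_1\to Y_2$ over $S$. These two morphisms are mutual inverses as rational maps and hence as morphisms, producing the desired isomorphism $\varphi\colon Y_1\to Y_2$ over $S$ with $\mu_2=\varphi\circ\mu_1$. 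I expect the only delicate point to be the convex-decomposition step, where using the \emph{relative} interior of each summand (rather than the interior in the ambient space $N^1(X/S)_{\R}$) is what guarantees the strict $\pi_i$-ampleness of $A_i$ needed to invoke Lemma~\ref{lemma-regular}.
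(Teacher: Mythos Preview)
Your proposal is correct and follows essentially the same route as the paper's proof: both reduce (b)$\Rightarrow$(a) to two applications of Lemma~\ref{lemma-regular}, after first arguing via convex geometry that an interior point of $\Eff{X/S,\mu_i}$ decomposes as $\mu_i^*A_i+E_i$ with $A_i$ genuinely $\pi_i$-ample. The paper obtains this decomposition by invoking Rockafellar's result \cite[Corollary~6.6.2]{Roc70} that the relative interior of a Minkowski sum is the sum of relative interiors (together with Remark~\ref{rem:interior} to identify $\mathrm{ri}\big(\mu_i^*\Nefe{Y_i/S}\big)$ with $\mu_i^*\Amp{Y_i/S}$), which is exactly the ``standard convex-geometric fact'' you sketch by hand.
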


\begin{proof}
We proceed as in the proof of \cite[Lemma 4.2]{GLSW24}.   It is clear that $(a)$ implies $(b)$. For the other implication we observe that by Remark \ref{rem:interior} and \cite[Theorem 6.6]{Roc70} we have that the relative interior $\textrm{ri}\big(\mu_i^* \Nefe {Y_i/S}\big)$ coincides with $\mu_i^*(\Amp{Y_i/S})$. Notice moreover that 
 $
 \EffO{X/S, \mu_i}=\textrm{ri} \big( \Eff{X/S, \mu_i}\big)
 $, since the convex cone $\Eff{X/S, \mu_i}$ is full-dimensional in the relative effective cone $\Eff{X/S}$.
 Then from
 \cite[Corollary 6.6.2]{Roc70}, it follows that  
\begin{equation}\label{eq:b=>a}
 \EffO{X/S, \mu_i}=\textrm{ri} \big( \Eff{X/S, \mu_i}\big)= \mu_i^* \Amp{Y_i/S}+ \textrm{ri} (\sum_{E\in \textrm{Exc}(\mu_i)} \mathbb R_{\geq 0} E).
\end{equation}
Now from $(b)$ and (\ref{eq:b=>a}) we deduce the existence, for $i=1,2$, of relatively ample $\mathbb R$-divisors $A_i$ on $Y_i$, and $\mu_i$-exceptional divisors on $X$, such that 
$$
\mu_1^*A_1 + E_1= \mu_2^*A_2 + E_2. 
$$
By applying twice Lemma \ref{lemma-regular} we obtain that $\mu_1\circ \mu_2^{-1}\colon Y_1\to Y_2$ is biregular and yields the desired isomorphism. 
\end{proof}

\begin{remark}\label{rmk:=}
Notice that Lemma \ref{lem:isom_MoriChamber} implies in particular that as soon as $
\EffO{X/S, \mu_1} \cap \EffO{X/S, \mu_2} \neq \emptyset$, we actually have $
\EffO{X/S, \mu_1} = \EffO{X/S, \mu_2}.$
\end{remark}

%
\subsection{Cones and cone conjectures}\label{subsec:cones}
%

Let $V$ be a real vector space of finite dimension. We suppose that $V$ has a distinguished $\mathbb Q$-structure, i.e.\ there exists a rational vector space $V_{\mathbb Q}$ such that $V= V_{\mathbb Q}\otimes_{\mathbb Q}\mathbb R$.  A cone is a subset $C\subset V$ such that 
$$
\forall \lambda \in \mathbb R_{>0}, \forall x \in C, \mbox{ we have }  \lambda x\in C.
$$ 
A cone is called  polyhedral  (resp.\ rational polyhedral) if it is a closed
convex cone generated by a finite number of vectors (resp.\ rational vectors). 

\begin{definition}
If $C\subset V$ is an open convex cone, we denote by $C_+$  the convex hull in $V$ of $\overline {C}\cap V(\mathbb Q)$.

Let $\Gamma$ be a group and $\rho\colon \Gamma\to \textrm {GL}(V)$ a group homomorphism. Suppose that the action of  $\Gamma$ on $V$ (via $\rho$) leaves a convex full-dimensional cone $C\subset V$ invariant. We say that $(C_+,V)$ is of polyhedral type if there is a polyhedral cone $\Pi\subset C_+$ such that $\Gamma\cdot \Pi \supset C$. 
\end{definition}
The following important result is \cite[Theorem 3.8 and Application 4.14]{Loo14}, see also \cite[Lemma 3.5]{LZ22}. 
\begin{lemma}\label{lem:loo}
    Under the notation
and assumptions above,  suppose that $\rho\colon \Gamma\to \textrm {GL}(V)$ is injective and that the cone $C$ is non-degenerate. If $(C_+, \Gamma)$ is of polyhedral type, then  $C_+$  admits a rational polyhedral fundamental
domain for the action of $\Gamma$.
\end{lemma}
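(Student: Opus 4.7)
The plan is to follow the standard reduction-theoretic strategy for discrete groups acting on convex cones, of which Looijenga's \cite{Loo14} is the definitive treatment. First I would use the non-degeneracy of $C$ to establish that the $\Gamma$-action on $C$ is properly discontinuous. Non-degeneracy means $\bar C$ contains no line, so the open dual cone $C^\vee \subset V^*$ is non-empty; averaging the characteristic function of $C^\vee$ over $\bar C$ produces a canonical strictly convex, $\mathrm{GL}(V)$-equivariant ``characteristic function'' $\phi\colon C \to \R_{>0}$ whose sublevel sets are compact. Combined with the injectivity of $\rho$ and the discreteness of $\Gamma$, this forces stabilizers of interior points to be finite and the action on $C$ to be proper.

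Next I would upgrade the given polyhedral cone $\Pi$ with $\Gamma \cdot \Pi \supset C$ to a \emph{rational} polyhedral cone $\Pi'$ with the same covering property. Since $\Pi$ has finitely many facets and $V_{\mathbb Q}$ is dense in $V$, one can perturb the supporting hyperplanes slightly outward through rational ones; because $C$ is open and $\Gamma \cdot \Pi$ already covers it, a small enough perturbation preserves the covering $\Gamma \cdot \Pi' \supset C$. Passing from $C$ to $C_+$ is then harmless, since by definition $C_+$ is the convex hull of $\bar C \cap V_{\mathbb Q}$ and any rational polyhedral cover of $C$ automatically covers the rational rays on the boundary contained in $C_+$.

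The core step is to carve a rational polyhedral fundamental domain $F$ inside $\Pi'$. By properness of the action together with compactness of a cross-section $\{x \in \Pi' : f(x)=1\}$ (for some rational linear form $f$ strictly positive on $\bar C \setminus \{0\}$, which exists by non-degeneracy), the set
\[
\Sigma = \{\gamma \in \Gamma \setminus\{\mathrm{id}\} : \gamma \Pi' \cap \Pi' \neq \emptyset\}
\]
is finite. For each $\gamma \in \Sigma$ pick a rational interior point $v_0$ of $\Pi'$ and a rational linear half-space $H_\gamma^+$ separating $v_0$ from $\gamma v_0$ (which is possible because $v_0$ and $\gamma v_0$ are distinct rational points, using the injectivity of $\rho$); set
\[
F = \Pi' \cap \bigcap_{\gamma \in \Sigma} H_\gamma^+.
\]
By construction $F$ is rational polyhedral. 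A Dirichlet/Voronoi argument, adapted to the convex-cone setting, then shows that $\Gamma \cdot F \supset C_+$ and that the $\gamma$-translates of the interior of $F$ are pairwise disjoint for distinct $\gamma$.

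The hard part is the last argument: in a usual Dirichlet construction one relies on a $\Gamma$-invariant inner product, which exists only for finite groups, not for the discrete group $\Gamma$ of interest here. The remedy, which is the technical heart of \cite[Theorem 3.8]{Loo14}, is to replace the metric construction by a combinatorial one using the facet structure of $\Pi'$ and its finitely many neighbouring translates $\gamma \Pi'$ for $\gamma \in \Sigma$, and to check that the resulting chambers assemble into a fundamental domain over all of $C_+$. The delicate issue is the behaviour near rational boundary rays of $C_+$, where stabilizers of rays may be infinite; handling these is exactly why one must work with $C_+$ rather than $\bar C$, and why the polyhedral-type hypothesis (rather than mere finiteness of the orbit space) is needed.
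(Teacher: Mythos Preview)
The paper does not prove this lemma at all: it is stated with the attribution ``The following important result is \cite[Theorem 3.8 and Application 4.14]{Loo14}, see also \cite[Lemma 3.5]{LZ22}'' and then used as a black box. So there is no in-paper argument to compare your attempt against; the paper's ``proof'' is a citation.

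Your sketch is a reasonable summary of the reduction-theoretic circle of ideas behind Looijenga's result, and you are honest that the final step is where the genuine content lies. Two points deserve flagging, though. First, the finiteness of your set $\Sigma$ relies on properness of the action on the cross-section of $\Pi'$; this is fine while the cross-section sits inside the open cone $C$, but the moment $\Pi'$ meets $\partial C$ --- which it must, in order for translates to cover $C_+$ rather than just $C$ --- ray stabilisers can be infinite and $\Sigma$ need not be finite, so the Dirichlet-style construction in your third paragraph does not go through as written. Second, the sentence ``any rational polyhedral cover of $C$ automatically covers the rational rays on the boundary contained in $C_+$'' is exactly the non-trivial boundary statement, not a triviality; it is essentially what \cite[Application 4.14]{Loo14} is for. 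You acknowledge both issues in your last paragraph, so the proposal is best read as an informed outline rather than a proof --- which, given that the paper itself only cites the result, is already more than the paper provides.
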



In the literature (and in Morrison's original formulation when $S$ is a point and without boundary divisor), the cone $\Nefe{X/S}$ (resp.\ $\Move{X/S}$) in Conjecture \ref{conj:cone}  is sometimes replaced with $\Nefp{X/S}$ (resp.\ $\Movp{X/S}$). 

The references directly related to our work are those establishing the conjecture for IHS manifolds: \cite{Mar11}  for the movable cone conjecture, \cite{AV17,AV20} for deriving the nef cone conjecture from the movable one (see also \cite{MY15}), \cite{LMP24} for singular IHS varieties, and \cite{Tak21} over not necessarily closed fields of characteristic zero, as well as recent and less recent works in the relative case  \cite{Kaw97, LZ22, Li23, MS24}. Moreover, as already mentioned, our paper is  inspired by \cite{GLSW24} (see also \cite{Xu24}). 

%
%
%
%
\subsection{Birational geometry}
%
We will follow the terminology from \cite[Section 2.3]{KM98}.
\begin{definition}
Let $X$ be a normal projective variety and let $\holom{\pi}{X}{S}$ be a fibration. Let $\Delta$ be a boundary divisor on $X$ such that the pair $(X,\Delta)$ is log-canonical. 

Let $\varphi\colon X\dashrightarrow Y$ be a birational contraction of normal projective varieties over $S$ and set $\Delta_Y \coloneqq \varphi_* \Delta$. We say that $\varphi\colon (X,\Delta)\dashrightarrow (Y,\Delta_Y)$ is a minimal model of $(X,\Delta)$ if
\begin{itemize}
    \item[-] $a(E,X,\Delta)> a(E,Y,\Delta_Y)$ for all $\varphi$-exceptional divisor,
    \item[-] $K_Y+\Delta_Y$ is nef over $S$, and
    \item[-] $Y$ is $\Q$-factorial.
\end{itemize}

Moreover, a minimal model $(Y,\Delta_Y)$ over $S$ is a good minimal model if $K_Y+\Delta_Y$ is semiample over $S$.
\end{definition}

Note that by \cite[Lemma 3.6.3]{BCHM10}, the first condition in the definition of a minimal model is equivalent to requiring that $\varphi$ is $(K_X+\Delta)$-negative.


\begin{terminology} \label{terminologygood}
Let $X$ be a normal projective $\Q$-factorial variety. We say that good minimal models
exist for effective klt pairs on $X$ if for every klt pair $(X, \Delta)$ such that
$\kappa(K_X +\Delta) \geq 0$, the pair $(X,\Delta)$ has a good minimal model.
\end{terminology}

\begin{remark}\label{rmk:gmm}
The existence of good minimal models for effective klt pairs is known in the following
cases (we limit ourselves to cases relevant for klt $K$-trivial fibrations): 
\begin{enumerate}[(a)]
\item {\it $\dim\leq 3$,} by the ``classical'' work of Mori, Kawamata and many others.  



\item {\it klt pairs fibered in varieties admitting a good minimal model,} by \cite{HX13} (see Theorem \ref{thm:HX13_R_divisor} below) building upon  \cite{Lai10}.

 \item {\it Projective IHS manifolds of the 4 known deformation types.} By \cite{LP16} for any effective divisor $B$ on any projective IHS manifolds $X$ such that $(X,B)$ is klt, the pair $(X,B)$ admits a minimal model.  Moreover the semiampleness of a square zero nef divisor (if the square is positive then the divisor is also big and one invokes the base-point-free theorem) is given by \cite[Corollary 1.1]{Mats17}
in the case of $K3^{[n]}$ or generalized Kummer deformation type, 
by \cite[Theorem 2.2]{MO22} in the case of OG10 deformation type, and by \cite[Theorem 7.2]{MR21} in the
case of OG6 deformation type.
\end{enumerate}
\end{remark}

The following result is \cite[Theorem 2.7]{LZ22}.
\begin{theorem}\label{thm:intersect_nef}
    Let $(X,B)\to S$ be a klt $K$-trivial fibration. Let $\Pi\subset \Eff{X/S}$ be a polyhedral cone.
    Then $\Pi\cap\Nefe{X/S}$ is a polyhedral cone. If moreover $\Pi$ is rational, then $\Pi\cap\Nefe{X/S}$ is also rational.
\end{theorem}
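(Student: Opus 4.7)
The plan is to use the $K$-trivial hypothesis to convert a question about the a priori non-polyhedral cone $\Nef{X/S}$ into one about relative log minimal models on small perturbations of $(X,B)$, and then invoke a Shokurov-type polytope decomposition.

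First, since $\Pi$ is polyhedral and contained in $\Eff{X/S}$, we may write $\Pi = \sum_{i=1}^r \R^+ [D_i]$ with each $[D_i] \in \Eff{X/S}$ (and rational if $\Pi$ is rational). By Lemma \ref{lem:rel_to_abs}(a) each representative can be taken to be an effective $\R$-divisor $D_i$. Since $(X,B)$ is klt, there exists $\varepsilon > 0$ such that $\bigl(X,\, B + \sum_i a_i D_i\bigr)$ remains klt for every $(a_1,\ldots,a_r) \in \Sigma \coloneqq [0,\varepsilon]^r$. The $K$-triviality $K_X + B \sim_{\R,\pi} 0$ yields
\[
K_X + B + \sum_i a_i D_i \sim_{\R,\pi} \sum_i a_i D_i,
\]
so $\sum_i a_i [D_i] \in \Nef{X/S}$ if and only if $K_X + B + \sum_i a_i D_i$ is $\pi$-nef, equivalently, if and only if the identity map is a relative log minimal model of the perturbed pair over $S$.

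The key technical input is a relative Shokurov polytope statement: for a $\Q$-factorial klt pair $(X,B)$ over $S$ and finitely many effective $\R$-divisors $D_1,\ldots,D_r$, the set
\[
\mathcal{N} \coloneqq \bigl\{(a_i) \in \Sigma :\ K_X + B + \textstyle\sum_i a_i D_i \text{ is $\pi$-nef}\bigr\}
\]
is a rational polytope, cut out by finitely many linear inequalities arising from the relative extremal rays of neighbouring Mori chambers. This is the natural relative analogue of the absolute finiteness of log terminal models on a polytope of boundaries \cite[Corollary 1.1.5]{BCHM}. Granting it, $\R^+\cdot \mathcal{N} = \Pi \cap \Nef{X/S}$, and since $\Pi \subset \Eff{X/S}$ this equals $\Pi \cap \Nefe{X/S}$; polyhedrality in the general case, and rationality when $\Pi$ is rational (in which case the $[D_i]$ and $\varepsilon$ may be chosen rational), both follow.

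The hard part is the relative Shokurov polytope in this generality: running the $(K_X+B+\Delta)$-MMP relatively over the quasi-projective base $S$ with an $\R$-boundary varying in a polytope requires assembling relative analogues of the existence, termination and finiteness-of-minimal-models pieces of \cite{BCHM}. In the $K$-trivial setting the boundary $K_X+B+\sum a_i D_i$ fails to be $\pi$-big, the usual hypothesis of BCHM, so one needs to import an additional input such as the existence of good minimal models for effective klt pairs on the fibres (as in the set-up of Theorem \ref{thm:main}). Once the polytope statement is secured, the convex-geometric extraction of the rational polyhedral cone is immediate.
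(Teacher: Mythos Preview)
The paper does not give its own proof of this statement; it is quoted directly as \cite[Theorem~2.7]{LZ22}. Your reduction---choosing effective representatives $D_i$ for the generators of $\Pi$, using $K_X+B\equiv_\pi 0$ to identify $\pi$-nefness of $\sum a_iD_i$ with $\pi$-nefness of the log canonical divisor $K_X+B+\sum a_iD_i$, and then invoking a Shokurov-type nef polytope---is the correct strategy and is how the argument in \cite{LZ22} proceeds.

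Where your proposal goes astray is in the last paragraph. You conflate two very different polytope statements. The one you actually need, that
\[
\mathcal N=\bigl\{\Delta\in P : K_X+B+\Delta\ \text{is $\pi$-nef}\bigr\}
\]
is a (rational) polytope for any (rational) polytope $P$ of klt boundaries, is \emph{not} a finiteness-of-models statement and does \emph{not} require any bigness or good-minimal-model hypothesis. It is a direct consequence of the Cone Theorem: the walls of $\mathcal N$ inside $P$ are cut out by hyperplanes $\{(K_X+B+\Delta)\cdot C=0\}$ for extremal curve classes $C$, and Kawamata's bound $0<-(K_X+B+\Delta)\cdot C\le 2\dim X$ on the length of such rays forces only finitely many of these hyperplanes to meet the compact set $P$. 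This is classical Shokurov, with all the needed ingredients already in \cite[Theorem~3.7]{KM98}; no MMP needs to be run. What \emph{does} require BCHM or the good-minimal-model hypothesis of Theorem~\ref{thm:main} is the much deeper claim that the whole polytope $P$ decomposes into finitely many Mori chambers---precisely the content of Theorem~\ref{thm:ChoiShokurov} later in the paper---but that is not what is at stake here, since the identity is the only model in question. So your ``hard part'' is not hard, and the theorem is genuinely unconditional as stated; importing the extra hypothesis would weaken the result unnecessarily.
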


We also record the following version of \cite[Theorem 2.12]{HX13} from \cite[Theorem 2.2]{Li23} which holds more generally for $\R$-divisors. 

\begin{theorem}\label{thm:HX13_R_divisor}
Let $\pi\colon X\to S$ be a fibration and let $(X,B)$ be a klt pair. Assume that for a very general point $s\in S$, 
the pair $(X_s, B_s)$ has a good minimal model.
Then $(X,B)$ has a good minimal model over $S$.
\end{theorem}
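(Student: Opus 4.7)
The plan is to reduce the $\R$-boundary case to the $\Q$-boundary version \cite[Theorem 2.12]{HX13} via a Shokurov-polytope perturbation argument. Writing $B = \sum_i r_i B_i$ with $B_i$ the prime components and $r_i \in \R_{>0}$, I choose a small rational polytope $P$ in coefficient space containing the vector $(r_i)$ in its relative interior and such that every boundary $\Delta \in P$ yields a klt pair on $X$. Let $B^{(1)}, \ldots, B^{(N)}$ be the $\Q$-divisors corresponding to the vertices of $P$, so that $B = \sum_j \lambda_j B^{(j)}$ with $\lambda_j \in \R_{>0}$ and $\sum_j \lambda_j = 1$.

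The first substantive step is to verify that the fibre hypothesis is preserved at the vertices: after possibly shrinking $P$, each $(X_s, (B^{(j)})_s)$ should still admit a good minimal model for a very general $s \in S$. This follows from the openness of the \emph{good minimal model} property within a Shokurov-type polytope of klt boundaries, itself a consequence of the finiteness of log-minimal models over a compact polytope of boundaries, in the spirit of \cite{BCHM10}. This is the step I expect to be the main technical obstacle, since the hypothesis on the fibre is only assumed \emph{very generically}, so one has to be careful to choose $s$ outside the union of countably many proper closed subsets associated to the various vertices.

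With this in place, applying \cite[Theorem 2.12]{HX13} to each $\Q$-pair $(X, B^{(j)})$ yields a good minimal model $\mu_j \colon (X, B^{(j)}) \dashrightarrow (Y_j, B^{(j)}_{Y_j})$ over $S$. Invoking once more the finiteness of minimal models for boundaries ranging over $P$ (running, e.g., a common MMP over $S$ with scaling of a fixed relatively ample divisor), one can find a single $\Q$-factorial model $\mu \colon X \dashrightarrow Y$ over $S$ such that all push-forwards $K_Y + (B^{(j)})_Y$ are simultaneously $\pi_Y$-semi-ample.

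To conclude, observe that
$$
K_Y + B_Y = \sum_j \lambda_j (K_Y + (B^{(j)})_Y)
$$
is a positive $\R$-linear combination of $\pi_Y$-semi-ample divisors, and hence itself $\pi_Y$-semi-ample. Moreover, the birational map $\mu$ is $(K_X + B)$-negative because it is $(K_X + B^{(j)})$-negative for every $j$ by construction, and the discrepancies combine linearly. Thus $(Y, B_Y)$ is the sought good minimal model of $(X, B)$ over $S$.
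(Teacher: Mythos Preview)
Your reduction-to-$\Q$-divisors strategy has a genuine gap at Step 2. You assert that if $(X_s, B_s)$ has a good minimal model then so does $(X_s, (B^{(j)})_s)$ for the nearby $\Q$-boundaries $B^{(j)}$, invoking ``openness of the good minimal model property'' via finiteness of models ``in the spirit of \cite{BCHM10}''. But the finiteness results in \cite{BCHM10} require the boundaries (or the log canonical class) to be big; without bigness, openness of the good-minimal-model locus in the boundary is an abundance-type statement and is not known. The hypothesis here gives you a \emph{single} pair $(X_s, B_s)$ with a good minimal model, and perturbing $B$ can change the Iitaka or numerical dimension of $K_{X_s}+B_s$. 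So the vertices $B^{(j)}$ of your polytope need not satisfy the fibre hypothesis of \cite[Theorem 2.12]{HX13}, and the argument stalls before it starts.

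Step 4 has a related circularity: producing a single model $Y$ on which all $K_Y + (B^{(j)})_Y$ are simultaneously semiample again needs a finiteness-of-models statement over the polytope $P$, which in the non-big setting either is unknown or relies on the very theorem you are proving (compare Theorem \ref{thm:geo+}, whose proof \emph{uses} the present result).

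The paper proceeds along an entirely different line. Rather than reducing to $\Q$-coefficients, it follows Li's observation \cite[Theorem 2.2]{Li23} that the argument of \cite[Theorem 2.12]{HX13} goes through verbatim for $\R$-divisors once the construction $\mathrm{Proj}\big(R(X/S, K_X+\Delta)\big)$ is replaced by the relative canonical model, whose existence for effective klt $\R$-pairs is provided by \cite[Corollary 1.2]{Li22}. This sidesteps any perturbation of the boundary and hence the openness issue altogether.
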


\begin{proof}
Li observed in \cite[Theorem 2.2 and the following paragraph]{Li23} that the proof of  \cite[Theorem 2.12]{HX13} goes through by replacing $\textrm{Proj} \big(R(X/S, K_X+\Delta) \big)$ with  
the canonical model of $(X,\Delta)$ over $S$ whose existence is known for effective klt pairs by \cite[Corollary 1.2]{Li22}. 
%
\end{proof}

%
\section{The geography of models for Klt fibrations}\label{s:geo}
%

We now prove a result which is  stronger than \cite[Theorem 2.4]{LZ22}. Notice however that we follow closely the proof of \cite[Theorem 2.4]{LZ22}, checking that it still works with our weaker hypothesis. In its turn \cite[Theorem 2.4]{LZ22}  goes back to \cite[Theorem 3.4]{SC11} and \cite[Lemma 7.1]{BCHM10}. 

    \begin{theorem}\label{thm:geo+}
        Let $X$ be a normal projective $\Q$-factorial variety and $\pi\colon X\to S$ be a fibration. Assume that for a very general point $s\in S$, good minimal models exist for effective klt pairs on the fibre $X_s$.

        Fix $D_j$ effective $\Q$-divisors on $X$ with $j\in\{1,\dots,r\}$ and set $\mathcal{C}\coloneqq \bigoplus_{j=1}^r [0,1)D_j$.
        Suppose that $P\subset \mathcal{C}$ is a rational polytope such that
        \begin{equation}\label{eq:hyp_P}
            \text{for any }\Delta\in P, \text{ the pair }(X,\Delta)\text{ is klt.}
        \end{equation}
        Then there is a finite decomposition
        \[
        P = \bigsqcup_{i=1}^m P_i
        \]
        such that $\overline{P}_i$ is a rational polytope for any $1\leq i\leq m$, together with finitely many $\Q$-factorial birational contractions \[\varphi_i\colon X\dashrightarrow X_i \text{ over } S, \text{ where } 1\leq i\leq m,\] such that the following property holds:
        if $\Delta \in P_i$ for some $i\in\{1,\dots,m\}$, then $\varphi_i\colon (X,\Delta)\dashrightarrow (X_i,\Delta_i)$ with $\Delta_i\coloneqq (\varphi_i)_* \Delta$ is a good minimal model of $(X,\Delta)$ over $S$.
        \end{theorem}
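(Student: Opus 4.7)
My plan is to adapt the proof of \cite[Theorem 2.4]{LZ22} (itself modeled on \cite[Theorem 3.4]{SC11} and \cite[Lemma 7.1]{BCHM10}), replacing the assumption that good minimal models exist in dimension $\dim X$ by the weaker fibral hypothesis, via Theorem \ref{thm:HX13_R_divisor}. The key novelty is purely logical: wherever the cited arguments invoke BCHM on $X$, I want to reroute the argument through the fibre $X_s$ and then ascend with Theorem \ref{thm:HX13_R_divisor}.

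First, I would establish pointwise existence. For any $\Delta\in P$, the pair $(X,\Delta)$ is klt by hypothesis (\ref{eq:hyp_P}), so $(X_s,\Delta_s)$ is klt for a very general $s\in S$. If $\kappa(K_{X_s}+\Delta_s)\geq 0$, the fibral assumption supplies a good minimal model of $(X_s,\Delta_s)$, and Theorem \ref{thm:HX13_R_divisor} lifts it to a good minimal model $\varphi_\Delta\colon (X,\Delta)\dashrightarrow (X_\Delta,(\varphi_\Delta)_*\Delta)$ over $S$. The case $\kappa(K_{X_s}+\Delta_s)<0$ leads to a Mori fibre space on the fibre and can be excluded in the setting of interest (klt $K$-trivial fibrations), or handled by a standard additional chamber.

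Second, I would build the chamber decomposition. Fix $\Delta_0\in P$ with good minimal model $\varphi_0\colon X\dashrightarrow X_0$ over $S$. Semiampleness of $K_{X_0}+(\varphi_0)_*\Delta_0$ relative to $S$ together with the openness of the relatively ample cone (see Remark \ref{rem:interior}) guarantees that for every $\Delta$ in a sufficiently small rational neighbourhood $U_0$ of $\Delta_0$ in $P$, the same $\varphi_0$ is a good minimal model of $(X,\Delta)$ over $S$: the $\varphi_0$-exceptional divisors remain contracted, and $(\varphi_0)_*(K_X+\Delta)$ remains relatively nef and semiample. Quasi-compactness of $\overline{P}$ then produces a finite cover, and declaring $\Delta\sim\Delta'$ iff the associated contractions are isomorphic over $S$ (cf.\ Lemma \ref{lem:isom_MoriChamber}) partitions $P$ into finitely many pieces $P_1,\dots,P_m$. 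Each $\overline{P_i}$ is cut out inside $\overline{P}$ by finitely many rational linear conditions coming from relative nefness of $(\varphi_i)_*(K_X+\Delta)$ and from effectivity of the exceptional coefficients, so each $\overline{P_i}$ is a rational polytope.

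The principal obstacle is the second step: in the absolute case, both the uniform description of a minimal model on a neighbourhood of $\Delta_0$ and the global finiteness of the chambers rely on the full-dimensional MMP of \cite{BCHM10}. Under our weaker hypothesis this finiteness must be extracted from the fibral MMP. The remedy is to combine (i) the fibral MMP on $X_s$, (ii) Theorem \ref{thm:HX13_R_divisor} to descend each fibral model to a relative model, and (iii) the rigidity provided by Lemma \ref{lemma-regular} together with Lemma \ref{lem:isom_MoriChamber} to identify isomorphism classes of relative contractions via their Mori chambers. This way, absolute finiteness of minimal models on $X$ is replaced by its fibral counterpart, while the polyhedral–geographic formalism of \cite{SC11} remains unchanged.
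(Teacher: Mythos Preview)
Your pointwise existence paragraph is fine and matches the paper: the fibral hypothesis plus Theorem~\ref{thm:HX13_R_divisor} gives a good minimal model of $(X,\Delta)$ over $S$ for every $\Delta\in P$.

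The second step, however, has a genuine gap. You claim that semiampleness of $K_{X_0}+(\varphi_0)_*\Delta_0$ over $S$ and openness of the ample cone yield a neighbourhood $U_0$ of $\Delta_0$ on which the \emph{same} $\varphi_0$ is a good minimal model. This is false: semiample is not ample. If $K_{X_0}+(\varphi_0)_*\Delta_0$ is nef but lies on the boundary of the relative nef cone (as it will whenever $\Delta_0$ sits on a wall), an arbitrarily small perturbation of $\Delta$ can make $(\varphi_0)_*(K_X+\Delta)$ fail to be nef. No single model then works on a full neighbourhood of $\Delta_0$, your open cover does not exist, and the compactness argument collapses. Likewise, the assertion that each $\overline{P_i}$ is cut out by finitely many rational linear ``nefness'' conditions presupposes local polyhedrality of the relative nef cone, which is exactly what you are trying to prove.

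The paper avoids this by an induction on $\dim P$ that never appeals to openness. One first treats the case where some rational $\Delta_0\in\mathrm{ri}(P)$ satisfies $K_X+\Delta_0\equiv_S 0$: projecting from $\Delta_0$ to a facet $Q$ and using the inductive hypothesis on $Q$, one observes that $K_X+t\Gamma+(1-t)\Delta_0\equiv_S t(K_X+\Gamma)$, so the minimal model at $\Gamma\in Q$ serves the whole segment. In the general case one runs the good minimal model at a chosen rational $\Delta_0$, passes to the induced canonical fibration $\phi\colon X'\to Z'$ (so $K_{X'}+\Delta_0'\equiv_{Z'}0$), applies the previous step over $Z'$, and then proves---this is the non-trivial Claim---that after shrinking $P$ the models obtained are nef not just over $Z'$ but over $S$, by interpolating with the relatively ample class $A'$ on $Z'$. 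This interpolation step is precisely what replaces your missing ``openness'' input, and it cannot be bypassed by compactness alone.
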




      \begin{remark}\label{rem:conv_geo}
        To prove Theorem \ref{thm:geo+}, we start with the following remark on convex geometry. Let $P$ be a rational polytope.
    \begin{enumerate}[(a)]
        \item Given $\epsilon > 0$, the rational polytope $P$ can be decomposed into finitely many rational sub-polytopes $P_i$'s such that for any $i$, we have $\dim P_i = \dim P$ and that $P_i$ is contained in a ball of radius $\epsilon$ in the affine hull of $P$. Moreover, if we denote $\textrm{ri}(\cdot)$ the relative interior, for any $i\neq j$ we have $\textrm{ri}(P_i)\cap \textrm{ri}(P_j)=\emptyset$.
        \item Fix a decomposition $P=\bigcup_i P_i$ into finitely many rational sub-polytopes $P_i$'s with $\dim P_i = \dim P$ for any $i$, and $\textrm{ri}(P_i)\cap\textrm{ri}(P_j)=\emptyset$ for any $i\neq j$.
        
        Assume that for each $P_i$, there exists a finite disjoint
        decomposition $P_i=\bigsqcup_\ell Q_{i,\ell}$ such that
        $\overline{Q}_{i,\ell}$ is a rational polytope for any
        $\ell$. Then we can construct a finite disjoint decomposition
        $P=\bigsqcup_k Q_k$ which is compatible with $P=\bigcup_i P_i$
        and such that $\overline{Q}_k$ is a rational polytope for any
        $k$. To prove the assertion, we fix the following terminology (see \cite[Section 3]{SC11}): let $P'\subset P$ be a polytope, we denote by $\textrm{Int}_P(P')$ the biggest open convex polyhedron in $P$ which is contained in $P'$; by open convex polyhedron in $P$, we mean a set defined by the intersection of $P$ with finitely many hyperplanes and open halfspaces in the linear span of $P$.
        Indeed, to construct the components $Q_k$'s, we first consider $\textrm{Int}_P (P_i)$ for all $i$ and we take all the components in its induced disjoint decomposition as components $Q_k$'s. We then continue with the faces of the $P_i$'s for all $i$, which do not lie on the boundary of $P$: we consider these faces by decreasing dimension order, we take $\textrm{Int}_P (\cdot)$ of each face and we add it successively as components $Q_k$'s.
    \end{enumerate}
    \end{remark}

         \begin{proof}
        We first note that by assumption and Theorem \ref{thm:HX13_R_divisor},
        \begin{equation}\label{eq:every_point_gmm}
            \text{ for any }\Delta\in P, \text{ the klt pair }(X,\Delta) \text{ has a good minimal model over }S.
        \end{equation}

        We proceed by induction on the dimension of $P$. The case $\dim P =0$ is clear by \eqref{eq:every_point_gmm}. We consider the case $\dim P \geq 1$.

    By Remark \ref{rem:conv_geo},
    it suffices to prove the statement locally around any point $\Delta\in P$, which means to prove that for any point $\Delta\in P$, there exists a rational polytope $P'\subset P$ containing $\Delta$ such that $\dim P'=\dim P$ and that $P'$ satisfies the decomposition properties in the statement of the theorem. Moreover, we can always shrink the rational polytope $P$, which means that we replace $P$ by a smaller rational polytope $P'\subset P$ such that $\Delta\in P'$ and $\dim P'=\dim P$. 
    
    \medskip

    \emph{Step 1. We prove the claim under the following assumption:
    there exists a rational point $\Delta_0\in \textrm{ri}(P)$ such that $K_X+\Delta_0\equiv_S 0$.}

    Since $\Delta_0\in\textrm{ri}(P)$ and $\dim P\geq 1$, we can pick a point $\Delta\in P$ such that $\Delta\neq \Delta_0$.
    Let $\Delta_{\mathcal{B}}$ be the projection of $\Delta$ from $\Delta_0$ on the boundary of $P$.

    Let $Q$ be a face of $P$ such that $\dim Q = \dim P -1$ and $\Delta_{\mathcal{B}}\in Q$. Let \[P' \coloneqq \textrm{conv}(Q,\Delta_0).\] 
    Then $P'$ is a rational polytope such that $\dim P'= \dim P$ and $\Delta\in P'\subset P$. By the inductive hypothesis, we can decompose $Q$ into a finite disjoint union
    \[
    Q =  \bigsqcup_{k=1}^s Q_k, 
    \]
    such that $\overline{Q}_k$ is a rational polytope for any $1\leq k\leq s$,
    together with finitely many $\Q$-factorial birational contractions 
    \[\theta_k\colon X\dashrightarrow X'_k \text{ over } S \text{ with } k\in\{1,\dots,s\}\] 
    satisfying the following property:
        if $\Gamma \in Q_k$ for some $k\in\{1,\dots,s\}$, then $\theta_k\colon (X,\Gamma)\dashrightarrow (X'_k,\Gamma'_k)$ with $\Gamma'_k\coloneqq (\theta_k)_* \Gamma$ is a good minimal model of $(X,\Gamma)$ over $S$.
    We note that any point in $P'\backslash \{\Delta_0\}$ can be written as $t\Gamma + (1-t)\Delta_0$ for some $t\in (0,1]$ and some $\Gamma\in Q$. Since
        \[
        K_X + t\Gamma + (1-t)\Delta_0 = t(K_X+\Gamma) + (1-t)(K_X+\Delta_0) \equiv_S t(K_X+\Gamma) \text{ for any }t\in(0,1],
        \]
    setting $\Delta'_{0,k}\coloneqq  (\theta_k)_* \Delta_0$, we infer that $(X_k, t\Gamma'_k + (1-t)\Delta'_{0,k})$ is a good minimal model of $(X, t\Gamma + (1-t)\Delta_0)$ over $S$ if and only if $(X_k, \Gamma'_k)$ is a good minimal model of $(X, \Gamma)$ over $S$.

    For any $k\in\{1,\dots,s\}$, we denote \[P_k \coloneqq \textrm{conv}(Q_k,\Delta_0)\backslash\{\Delta_0\}.\]
    Hence, by the above
        \[
        P' = \{ \Delta_0 \} \bigsqcup \Bigl( \big(\bigsqcup_{k=1}^s P_k\big) \Bigr)
        \]
    is a decomposition satisfying the desired decomposition properties; in particular, for each $k$, the $\Q$-factorial birational map is given by $\theta_k$. 

    \medskip

    \emph{Step 2. In this step we proceed with the general case.}

    Since $P$ is rational, there exists always a rational point in $\textrm{ri}(P)$.
    Fix a rational point $\Delta_0\in \textrm{ri}(P)$. Then $(X,\Delta_0)$ has a good minimal model over $S$ by \eqref{eq:every_point_gmm}, yielding a birational map $\eta\colon (X,\Delta_0)\dashrightarrow (X',\Delta_0')$ over $S$, where $\Delta_0'\coloneqq \eta_* \Delta_0$ is semiample over $S$. Hence, there exists a contraction 
    \[
    \phi\colon X' \to Z' \text{ over }S
    \]
    and $\Q$-Cartier divisor $A'\subset Z'$ which is ample over $S$ such that
    \begin{equation}\label{eq:pullbak_of_A'}
    K_{X'} + \Delta_0' \sim_{\Q,S} \phi^* A'.
    \end{equation}
    In particular, we have
    \begin{equation}\label{eq:trivial_over_Z'}
     K_{X'} + \Delta_0' \equiv_{Z'} 0.
    \end{equation}
    Since $\eta$ is $(K_X+\Delta_0)$-negative, by shrinking $P$, we may assume that
    \begin{equation}\label{eq:eta_is_negative}
        \eta \text{ is }(K_X+\Delta)\text{-negative for any }\Delta\in P,
    \end{equation}
    which implies that $P_{X'}\coloneqq \eta_*(P)$ satisfies the assumption \eqref{eq:hyp_P}, namely, for each divisor $\Delta'\in P_{X'}$ the pair $(X',\Delta')$ is klt.

By \eqref{eq:trivial_over_Z'} and \emph{Step 1}, the polytope $P_{X'}$ can be decomposed into a finite disjoint union
    \[
    P_{X'} = \bigsqcup_{i=1}^m Q'_i
    \]
    such that $\overline{Q'}_i$ is a rational polytope for any $i\in\{1,\dots,m\}$,
    and there exist finitely many $\Q$-factorial birational maps $\varphi_i'
    \colon X'\dashrightarrow X''_i$ over $Z'$
    such that for any $\Delta'_i\in Q_i'$, if we set $\Delta''_i\coloneqq (\varphi_i')_*\Delta'_i$, then
    \begin{equation}\label{eq:induction_good_mim_model}
    \varphi_i'\colon (X',\Delta'_i)\dashrightarrow (X''_i,\Delta''_i) \text{ is a good minimal model of }(X',\Delta'_i)\text{ over }Z'.
    \end{equation}

    \medskip

    \emph{Claim}. After shrinking $P$,
    for any $\Delta'_i \in Q_i'$,
    \[
    K_{X''_i} + \Delta''_i \text{ is nef over }S.
    \]

    The claim implies that for any $\Delta'_i \in Q_i'$,
    \[\varphi_i'\colon (X',\Delta'_i)\dashrightarrow (X''_i,\Delta''_i) \text{ is a minimal model of } (X',\Delta'_i) \text{ over }S.\]
   
   {\it Proof of the Claim.}
    Let $\Gamma'_i$ be a vertex of $\overline{Q'}_{i}$.
    Let $\Gamma_i\in P$ such that $\eta_*\Gamma_i = \Gamma'_i$.
    Then $(X,\Gamma_i)$ has a good minimal model over $S$ by \eqref{eq:every_point_gmm}. By \eqref{eq:eta_is_negative} and since $\varphi'_i$ is a good minimal model of $(X',\Gamma'_i)$ over $Z'$, $(X''_i,\Gamma''_i)$ is a good minimal model of $(X,\Gamma_i)$ over $Z'$. In particular, $(X''_i,\Gamma''_i)$ is semiample over $Z'$.
    Therefore, there exists a contraction
    \[
    \tau_i\colon X''_i\to T_i \text{ over }Z'
    \]
    and a $\Q$-Cartier divisor $H_i\subset T_i$ which is ample over $Z'$ such that
    \[
    K_{X''_i} + \Gamma''_i \sim_{\Q,Z'} \tau_i^* H_i.
    \]
    Thus there exists a $\Q$-Cartier divisor $\Theta'\subset Z'$ such that
    \begin{equation}\label{eq:pullback_on_X''_i}
    K_{X''_i} + \Gamma''_i = \tau_i^* H_i + (\mu_i\circ\tau_i)^* \Theta',
    \end{equation}
    where $\mu_i\colon T_i\to Z'$.
    \begin{center}
       \begin{tikzcd}
X \arrow[r, "\eta", dashed] \arrow[d, "\pi"'] & X' \arrow[r, "\varphi'_i", dashed] \arrow[d, "\phi"] & X''_i \arrow[d, "\tau_i"] \\
S                                             & Z' \arrow[l]                                & T_i \arrow[l, "\mu_i"']  
\end{tikzcd}
    \end{center}
    Then there exists some rational number $0<t_0\ll 1$ such that
    \[
    t(H_i + \mu_i^* \Theta') + (1-t) \mu_i^* A' = tH_i+ \mu_i^*\big( (1-t)A'+t\Theta'\big) \text{ is nef over }S
    \]
    for any $t\in[0,t_0]$, where $A'$ is as in (\ref{eq:pullbak_of_A'}), because the divisor $(1-t)A'+t\Theta'$ remains ample over $S$ for small $t$.

    We set $\Delta''_{0,i}\coloneqq (\varphi'_i)_* \Delta'_0$. Hence, by \eqref{eq:pullback_on_X''_i} and \eqref{eq:pullbak_of_A'}, we obtain
    \[
    t( K_{X''_i} + \Gamma''_i) + (1-t)(K_{X''_i} + \Delta''_{0,i}) \sim_{\R,S} \tau_i^*\big(t(H_i + \mu_i^* \Theta') + (1-t) \mu_i^* A'\big).
    \]
    Replacing $\Gamma'_i$ by $t_0\Gamma'_i + (1-t_0)\Delta'_0$ and repeating the above process for each vertex of $Q'_i$, we get a closed set $P'$ with $\Delta'_0\in P'\subset P$ and $\dim P'=\dim P$, such that $P'$ satisfies the claim. Since there are finitely many $Q'_i$, we can shrink $P'$ to obtain a rational polytope satifying the claim.
    
    \medskip

    \emph{Step 3}. By \emph{Step 2} we have a decomposition of $P_{X'}$. We now show how to deduce a decomposition of $P$ satisfying the desired decomposition properties. 
    
    By \eqref{eq:eta_is_negative} and by \emph{Step 2},  
     the decomposition
    \[
    P_{X'} = \bigsqcup_{i=1}^m Q'_i
    \]
    induces, via the map $\eta_*\colon N^1(X/S) \to N^1(X'/S)$, a decomposition  $P= \bigsqcup_{i=1}^m Q_i$  of $P$. More explicitly,  for any $\Delta\in Q_i$, if we denote $\Delta''_i\coloneqq (\eta\circ\varphi'_i)_* \Delta$, we obtain that $\eta\circ \varphi'_i\colon (X,\Delta)\dashrightarrow (X''_i,\Delta''_i)$ is a minimal model of $(X,\Delta)$ over $S$. 
    
    Since $(X,\Delta)$ has a good minimal model over $S$ by \eqref{eq:every_point_gmm}, we infer by \cite[Lemma 2.4(3)]{HX13} that $(\eta\circ \varphi'_i)\colon (X,\Delta)\dashrightarrow (X''_i,\Delta''_i)$ is also a good minimal model of $(X,\Delta)$ over $S$. Thus the decomposition $P= \bigsqcup_{i=1}^m Q_i$ satisfies the desired properties.
    \end{proof}

The following theorem is (a slightly stronger version of) \cite[Theorem 2.6]{LZ22}, which in turn heavily relies on the geography of models due to Choi and Shokurov \cite[Theorem 3.4]{SC11}. The hypothesis {\it existence of good minimal models in dimension $\dim(X)-\dim(S)$} can be replaced by {\it existence good minimal models
 for effective klt pairs on the fibre $X_s$} as in the proof one can invoke Theorem \ref{thm:geo+} instead of \cite[Theorem 2.4]{LZ22}.

 \begin{theorem} \label{thm:ChoiShokurov}
    Let $(X,B)\to S$ be a klt $K$-trivial fibration. 
 Assume that for a very general point $s\in S$, 
good minimal models
exist for effective klt pairs on the fibre $X_s$.         

Then for any rational polyhedral cone $\Pi\subset \Eff{X/S}$, there is a finite decomposition
\[
\Pi = \bigcup_{i=1}^m \Pi_i
\] 
such that $\overline{\Pi}_i$ is a rational polyhedral cone for any $1\leq i \leq m$, together with finitely many $\Q$-factorial birational contractions
$$
\varphi_i\colon X\dashrightarrow X_i \text{ over }S, \text{ where }1\leq i\leq m,
$$
such that the following property holds: for any $\Delta\in \Pi_i$ such that  $(X,B+\epsilon \Delta)$ is  klt for some $\epsilon >0$,
the birational map
$$
\varphi_i\colon (X, B+\epsilon \Delta) \dashrightarrow (X_i, B_i+\epsilon \Delta_i)
$$
is a good model over $S$, where $B_i\coloneqq (\varphi_i)_* B$, and $\Delta_i\coloneqq (\varphi_i)_* \Delta$.    
\end{theorem}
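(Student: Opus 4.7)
The plan is to reduce the cone statement to the polytope version Theorem \ref{thm:geo+}, following the strategy of \cite[Theorem 2.6]{LZ22}. The crucial ingredient is the $K$-triviality of the fibration: since $K_X + B \equiv_S 0$, for any effective $\R$-divisor $\Delta$ and $\varepsilon > 0$ we have $K_X + B + \varepsilon\Delta \equiv_S \varepsilon\Delta$, so the $(K_X + B + \varepsilon\Delta)$-MMP over $S$ coincides numerically with the $\Delta$-MMP. Consequently, the birational contraction realizing the good minimal model of $(X, B + \varepsilon\Delta)$ over $S$ depends only on the ray $\R^+[\Delta] \subset N^1(X/S)_\R$, not on $\varepsilon > 0$ (among those making the pair klt).

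Using Lemma \ref{lem:rel_to_abs}(a), I would pick effective $\Q$-divisors $D_1, \ldots, D_r$ on $X$ generating $\Pi$, and write $B = \sum_k b_k E_k$ with $E_k$ prime and $b_k \in \Q \cap [0, 1)$. Choosing a rational $\delta > 0$ small enough that $(X, B + \delta \sum_j D_j)$ is klt, form the rational polytope
$$P \coloneqq \Bigl\{\sum_k b_k E_k + \sum_j t_j D_j : t_j \in [0, \delta] \Bigr\} \subset \bigoplus_k [0, 1) E_k \oplus \bigoplus_j [0, 1) D_j,$$
which satisfies the klt hypothesis \eqref{eq:hyp_P}. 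Applying Theorem \ref{thm:geo+} to $P$ yields a finite disjoint decomposition $P = \bigsqcup_{k=1}^N Q_k$ with rational polytopal closures, together with $\Q$-factorial birational contractions $\varphi_k \colon X \dashrightarrow X_k$ over $S$ such that $\varphi_k$ is a good minimal model of $(X, \Delta)$ over $S$ for every $\Delta \in Q_k$.

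The next step is to transport this decomposition to $\Pi$. By the $K$-triviality observation, all points of $P$ lying on a common ray emanating from $B$ share the same good minimal model, so up to the merging of chambers sharing the same contraction the decomposition $\{Q_k\}$ is conical from $B$. Projecting $B + \sum_j t_j D_j \mapsto [\sum_j t_j D_j]$ and taking cones at the origin in $N^1(X/S)_\R$ produces the desired decomposition $\Pi = \bigsqcup_{i=1}^m \Pi_i$ with associated contractions $\varphi_i$: each $\overline{\Pi}_i$ is the cone at the origin generated by a rational polytopal slice of $\overline{Q}_{k(i)} - B$, hence rational polyhedral. For the defining property: given $\Delta \in \Pi_i$ with $(X, B + \varepsilon\Delta)$ klt, I would shrink $\varepsilon$ so that $B + \varepsilon\Delta \in Q_{k(i)} \subset P$; then Theorem \ref{thm:geo+} gives that $\varphi_i$ realizes a good minimal model of $(X, B + \varepsilon\Delta)$ for the shrunken $\varepsilon$, and the numerical invariance of the MMP under rescaling of the perturbation transfers this conclusion back to the original $\varepsilon$.

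The main technical point---and the only substantive input beyond the polytopal statement---is the conical-from-$B$ structure of the chamber decomposition, which is a direct consequence of the $K$-triviality; the rational polyhedrality of the $\overline{\Pi}_i$ then follows by elementary convex geometry applied to the rational polytopes produced by Theorem \ref{thm:geo+}.
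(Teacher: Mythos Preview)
Your proposal is correct and takes essentially the same approach as the paper, which simply observes that the proof of \cite[Theorem 2.6]{LZ22} goes through verbatim once \cite[Theorem 2.4]{LZ22} is replaced by Theorem \ref{thm:geo+}. Your outline---build a rational polytope $P$ around $B$ using effective $\Q$-divisor representatives of the generators of $\Pi$, apply Theorem \ref{thm:geo+}, and then use $K_X+B\equiv_S 0$ to see that the resulting chambers are conical from $B$ and hence descend to a rational polyhedral decomposition of $\Pi$---is precisely the argument of \cite[Theorem 2.6]{LZ22}; note that the conical step you isolate is already the content of Step~1 in the proof of Theorem \ref{thm:geo+} (with $\Delta_0=B$).
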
    




In other words we can  decompose any  rational polyhedral cone $\Pi\subset \Eff{X/S}$ into finitely many  sub-cones so that for any of these there exists a $\Q$-factorial birational contraction $\varphi_i\colon X\dashrightarrow X_i$ over $S$ that is a good minimal model for any divisor class in $\textrm{ri}(\Pi_i)$. 
Notice that the good minimal models $\varphi_i$ do exist by Theorem \ref{thm:HX13_R_divisor} and our assumption on the very general fibre.

We record the following result. 
\begin{lemma}\label{lem:otherinc}
Let $(X,B)\to S$ be a klt $K$-trivial fibration. 
 Assume that for a very general point $s\in S$, 
good minimal models
exist for effective klt pairs on the fibre $X_s$. Then     
$\Movp{X/S}\supset \Move{X/S}$.
\end{lemma}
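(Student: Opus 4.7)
The plan is to apply the geography of models (Theorem \ref{thm:ChoiShokurov}) to a rational polyhedral cone containing the given class $[D] \in \Move{X/S}$, and to show that the $\Q$-factorial birational contraction $\varphi_{i_0}$ associated with the chamber containing $[D]$ is necessarily small. The lemma will then follow by writing $[D]$ as a rational convex combination of the (automatically movable) rational vertices of that chamber.

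Fix $[D] \in \Move{X/S}$. Since $[D] \in \Eff{X/S}$, write $[D] = \sum_j c_j [D_j]$ with $c_j \in \R^+$ and $D_j$ rational $\pi$-effective, so $\Pi \coloneqq \sum_j \R^+ [D_j]$ is a rational polyhedral cone in $\Eff{X/S}$ containing $[D]$. Applying Theorem \ref{thm:ChoiShokurov} to $\Pi$ yields a finite disjoint decomposition $\Pi = \bigsqcup_i \Pi_i$ with rational polyhedral closures $\overline{\Pi}_i$ and $\Q$-factorial birational contractions $\varphi_i\colon X\dashrightarrow X_i$ over $S$, such that for every $\Delta \in \Pi_i$ with $(X,B+\epsilon\Delta)$ klt, $\varphi_i$ is a good minimal model of $(X,B+\epsilon\Delta)$ over $S$. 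Let $i_0$ be the unique index with $[D] \in \Pi_{i_0}$.

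My key sub-claim is that $\varphi_{i_0}$ is small (an isomorphism in codimension one). Granting it, Definition \ref{definition-chambres} gives $\overline{\Pi}_{i_0} \subset \Eff{X/S,\varphi_{i_0}} = \varphi_{i_0}^*\Nefe{X_{i_0}/S}$ (no exceptional summands appear). Since $\varphi_{i_0}$ is small, the pullback of any rational ample class on $X_{i_0}$ has base locus contained in the codimension $\geq 2$ indeterminacy of $\varphi_{i_0}^{-1}$ and is hence movable; passing to limits one obtains $\varphi_{i_0}^*\Nef{X_{i_0}/S} \subset \Mov{X/S}$. Therefore $\overline{\Pi}_{i_0}$ is a rational polyhedral cone contained in $\Mov{X/S}$ and containing $[D]$, so $[D]$ is a finite rational convex combination of its rational vertices, each lying in $\Mov{X/S}\cap N^1(X/S)_\Q$. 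This yields $[D] \in \Movp{X/S}$, as desired.

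The main obstacle is the sub-claim. By Lemma \ref{lem:rel_to_abs}(a) I choose an effective $\R$-divisor $D'$ with $D' \sim_{\R,\pi} D$; then $(X, B + \epsilon D')$ is klt for $0 < \epsilon \ll 1$. Since $[D'] = [D] \in \Pi_{i_0}$, $\varphi_{i_0}$ is a good minimal model of $(X,B+\epsilon D')$ over $S$, in particular a $(K_X + B + \epsilon D')$-negative birational contraction; because $K_X + B \equiv_\pi 0$ and $\varphi_{i_0}$ is a map over $S$ (so it only contracts curves in $\pi$-fibres), the negativity reduces to $D'$-negativity on the contracted fibre curves. If $\varphi_{i_0}$ had a prime exceptional divisor $E$, the standard MMP/negativity-lemma argument on a common log resolution of $\varphi_{i_0}$ would place $E$ in the relative stable base locus of $D'$ with strictly positive coefficient. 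But $[D] \in \Mov{X/S}$ forces the relative stable base locus of $D'$ to have codimension at least two in $X$, a contradiction. Hence $\varphi_{i_0}$ has no exceptional divisors. This is the natural relative analogue of the corresponding argument in \cite[Section 4]{GLSW24}.
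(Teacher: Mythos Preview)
Your argument is correct and follows essentially the same route as the paper, which simply observes that the proof of \cite[Lemma 5.1(2)]{LZ22} goes through verbatim once one substitutes Theorem \ref{thm:ChoiShokurov} for \cite[Theorem 2.6]{LZ22}; that proof likewise applies the geography-of-models decomposition to a rational polyhedral cone containing the given class, identifies the chamber, and shows the associated contraction is small.

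One small point of precision: the assertion that ``$[D]\in\Mov{X/S}$ forces the relative stable base locus of $D'$ to have codimension at least two'' is not literally correct for classes on the boundary of $\Mov{X/S}$. What membership in the closed movable cone gives you is $N_\sigma(D'/S)=0$ (equivalently, $\sigma_E(D'/S)=0$ for every prime divisor $E$). The MMP/negativity-lemma argument you invoke does exactly show $\sigma_E(D'/S)>0$ for any $\varphi_{i_0}$-exceptional $E$ (since the exceptional divisors of a good minimal model are the components of $N_\sigma$), so the contradiction is with $N_\sigma=0$, not with the naive stable base locus. With that adjustment the sub-claim and hence the lemma follow.
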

\begin{proof}
This statement corresponds to \cite[Lemma 5.1(2)]{LZ22}. The proof given there works verbatim under our weaker hypothesis once we replace \cite[Theorem 2.6]{LZ22} with Theorem \ref{thm:ChoiShokurov}.
\end{proof}
%
\section{On  the relative movable and nef cone conjectures}\label{sec:equiv}
%

\begin{lemma}\label{lem:decomp_indices}
    With the notations and assumptions from Theorem \ref{thm:ChoiShokurov}, consider a $\Q$-factorial birational contraction $\mu\colon X\dashrightarrow Y$    over $S$, and set 
    $$
    I(\mu)\coloneqq \{ 1\leq i\leq m \mid \Pi_i\cap \EffO{X/S;\mu} \neq \emptyset \}.
    $$ Then the following hold:
   \begin{equation}\label{eq:indexset}
        I(\mu)= \{1\leq i\leq m \mid (X_i,\varphi_i)\simeq (Y,\mu)\},
    \end{equation}  
    \begin{equation}\label{eq:inclusions}
    \Pi\cap \EffO{X/S,\mu} \subset \bigcup_{i\in I(\mu)} \overline{\Pi}_i \subset \Pi \cap \overline{\Eff{X/S,\mu}}.
    \end{equation}   
    If moreover $\Pi\cap \EffO{X/S,\mu}\neq\emptyset$, then
    \begin{equation}\label{eq:decomp_intersect_MoriChamber}
        \bigcup_{i\in I(\mu)} \overline{\Pi}_i = \Pi \cap \overline{\Eff{X/S,\mu}}.
    \end{equation}
\end{lemma}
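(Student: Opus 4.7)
The proof hinges on the containment $\Pi_i \subset \Eff{X/S, \varphi_i}$ for every $i$, which itself rests on the good-minimal-model conclusion of Theorem \ref{thm:ChoiShokurov}. To establish it I fix $[\Delta] \in \Pi_i$, take an effective representative (Lemma \ref{lem:rel_to_abs}) so that $(X, B+\epsilon\Delta)$ is klt for small $\epsilon > 0$, and apply Theorem \ref{thm:ChoiShokurov}: $\varphi_i$ is then a good minimal model of $(X, B+\epsilon\Delta)$ over $S$, hence $[K_{X_i}+B_i+\epsilon\Delta_i] \in \Nefe{X_i/S}$ by semiampleness. On a common resolution the negativity of the contraction gives $K_X + B + \epsilon\Delta = \varphi_i^{*}(K_{X_i}+B_i+\epsilon\Delta_i) + E_i$ with $E_i \geq 0$ effective and $\varphi_i$-exceptional, so the $K$-triviality $K_X + B \equiv_\pi 0$ yields in $N^1(X/S)_\R$
$$\epsilon [\Delta] = \varphi_i^{*}[K_{X_i}+B_i+\epsilon\Delta_i] + [E_i] \in \varphi_i^{*}\Nefe{X_i/S} + \sum_{E \in \textrm{Exc}(\varphi_i)} \R^+ E = \Eff{X/S, \varphi_i}.$$

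From this, the direction $\subseteq$ of (\ref{eq:indexset}) follows by a short convexity argument. Given $[\Delta] \in \Pi_i \cap \EffO{X/S, \mu}$ and any $[\Delta'] \in \EffO{X/S, \varphi_i}$ (non-empty as this chamber is full-dimensional in $N^1(X/S)_\R$), the points $(1-t)[\Delta] + t[\Delta']$ for $t \in (0,1]$ lie in $\EffO{X/S, \varphi_i}$, and for $t$ small enough also in the open set $\EffO{X/S, \mu}$; the two open Mori chambers intersect and Lemma \ref{lem:isom_MoriChamber} yields $(X_i, \varphi_i) \simeq (Y, \mu)$. For the opposite inclusion $\supseteq$, Remark \ref{rmk:=} gives $\Eff{X/S, \varphi_i} = \Eff{X/S, \mu}$ (with the same identity for the interiors), and the fact—built into the geography-of-models construction of Theorem \ref{thm:geo+} via the full-dimensional refinement in Remark \ref{rem:conv_geo}(a)—that each piece $\Pi_i$ meets the interior of its associated chamber places $i$ in $I(\mu)$.

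The two inclusions in (\ref{eq:inclusions}) are then immediate: any $[\Delta] \in \Pi \cap \EffO{X/S, \mu}$ lies in some $\Pi_i \subset \overline{\Pi_i}$ and, by definition, this $i$ belongs to $I(\mu)$; conversely, for $i \in I(\mu)$ we have $\varphi_i \simeq \mu$ by (\ref{eq:indexset}), so the backbone containment and Remark \ref{rmk:=} give $\overline{\Pi_i} \subset \overline{\Eff{X/S, \varphi_i}} = \overline{\Eff{X/S, \mu}}$, while trivially $\overline{\Pi_i} \subset \Pi$.

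For (\ref{eq:decomp_intersect_MoriChamber}) the inclusion $\subseteq$ is the second one of (\ref{eq:inclusions}). For $\supseteq$, given $[\Delta] \in \Pi \cap \overline{\Eff{X/S, \mu}}$ and an auxiliary $[\Delta_0] \in \Pi \cap \EffO{X/S, \mu}$ (supplied by the extra hypothesis), the segment $[\Delta_t] := (1-t)[\Delta] + t[\Delta_0]$ with $t \in (0,1]$ stays in $\Pi \cap \EffO{X/S, \mu}$ by convexity of $\Pi$ and the standard fact that mixing a boundary point with an interior point of a convex set yields interior points. Applying the first inclusion of (\ref{eq:inclusions}), each $[\Delta_t]$ is in some $\overline{\Pi_{i(t)}}$ with $i(t) \in I(\mu)$, so pigeonholing on the finite set $I(\mu)$ produces a sequence $t_n \to 0^{+}$ and $i_0 \in I(\mu)$ with $[\Delta_{t_n}] \in \overline{\Pi_{i_0}}$; closedness of $\overline{\Pi_{i_0}}$ then delivers $[\Delta] \in \overline{\Pi_{i_0}}$. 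The main obstacle I anticipate is making precise the $\supseteq$ direction of (\ref{eq:indexset}): it requires extracting from the proof of Theorem \ref{thm:geo+} the geometric property that every piece $\Pi_i$ hits the interior of the corresponding Mori chamber, which seems to be the only place where the internal combinatorics of the geography-of-models construction is genuinely used.
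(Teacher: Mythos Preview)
Your argument is correct and follows the paper's approach closely. The only stylistic difference is that where you run the convexity arguments by hand (the segment trick for showing the two open Mori chambers meet, and the pigeonhole-plus-limit argument for the $\supseteq$ inclusion in \eqref{eq:decomp_intersect_MoriChamber}), the paper instead cites \cite[Corollary 6.3.2 and Theorem 6.5]{Roc70}: from $\Pi \cap \EffO{X/S,\mu} \neq \emptyset$ one obtains $\Pi^\circ \cap \EffO{X/S,\mu} \neq \emptyset$ and then $\overline{\Pi \cap \EffO{X/S,\mu}} = \Pi \cap \overline{\Eff{X/S,\mu}}$, which, sandwiched between the two inclusions of \eqref{eq:inclusions} and using that $\bigcup_{i\in I(\mu)}\overline{\Pi}_i$ is closed, yields \eqref{eq:decomp_intersect_MoriChamber} in one line.

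On the $\supseteq$ direction of \eqref{eq:indexset} that you flag: your caution is justified, and your appeal to Remark \ref{rem:conv_geo}(a) does not resolve it, since that remark concerns the auxiliary refinement step, not the final disjoint decomposition of Theorem \ref{thm:geo+}, which can produce lower-dimensional pieces (e.g.\ the singleton $\{\Delta_0\}$ in Step 1 of that proof) lying entirely on the boundary of their chamber. The paper's own proof does not separately justify this direction either; it records only the chain $i \in I(\mu) \Rightarrow \EffO{X/S,\mu} \cap \Eff{X/S,\varphi_i} \neq \emptyset \Rightarrow \EffO{X/S,\mu} \cap \EffO{X/S,\varphi_i} \neq \emptyset \Leftrightarrow \varphi_i \simeq \mu$ and declares \eqref{eq:indexset} proved. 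This is harmless for the paper: every subsequent use of the lemma (in Propositions \ref{prop:finite_decomp} and \ref{prop:mov_implies_nef}) relies only on the implication $i \in I(\mu) \Rightarrow \varphi_i \simeq \mu$ together with \eqref{eq:inclusions} and \eqref{eq:decomp_intersect_MoriChamber}, all of which you establish cleanly.
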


\begin{remark*}
During the proof of the statement we will show that
    \begin{equation}\label{eq:equiv_MoriChamber}
        i\in I(\mu) \text{ if and only if } \Eff{X/S,\mu} = \Eff{X/S,\varphi_i}.
    \end{equation}
\end{remark*}

\begin{proof}
    We first prove \eqref{eq:indexset}. 
    Let $i\in\{1,\dots,m\}$.
    By Theorem \ref{thm:ChoiShokurov}, we infer that $\Pi_i$ is contained in the Mori chamber $\Eff{X/S,\varphi_i}$, and thus 
    \[\Pi_i\subset \overline{\Pi}_i \subset \varphi_i^*\Nef{X/S}+ \sum_{E \in \textrm{Exc}(\varphi_i)}\R^+ E\]
    as the cone on the right is closed. 
    Moreover, we have $\Pi_i\subset \Pi \subset \Eff{X/S}$ by construction. Hence, we conclude that
    \begin{equation}\label{eq:Pi_i_in_MoriChamber}
        \Pi_i \subset \Eff{X/S,\varphi_i} \text{ for any }1\leq i \leq m.
    \end{equation}
    Therefore, if $i\in I(\mu)$, then $\EffO{X/S,\mu}\cap \Eff{X/S,\varphi_i}\neq \emptyset$. Hence by \cite[Corollary 6.3.2]{Roc70}, we obtain
    \begin{equation}\label{eq:interstmorichamber}
    \EffO{X/S,\mu}\cap \EffO{X/S,\varphi_i}\neq \emptyset.
    \end{equation}
    By Lemma \ref{lem:isom_MoriChamber}, \eqref{eq:interstmorichamber} holds if and only if the $\Q$-factorial birational contractions $\mu \colon X\dashrightarrow Y$ and $\varphi_i \colon X\dashrightarrow X_i$ over $S$ are isomorphic. This proves \eqref{eq:indexset}. In particular, see Remark \ref{rmk:=}, this establishes \eqref{eq:equiv_MoriChamber}.

    Now we prove the inclusions \eqref{eq:inclusions}. 
    The first inclusion follows from the fact that for any $1\leq j\leq m$, we have:
    \[
    \Pi_j\cap \EffO{X/S,\mu} \subset \bigcup_{i\in I(\mu)}\overline{\Pi}_i.
    \]
    The second inclusion follows from the relation
    \[
    \Pi_i \subset \Eff{X/S,\varphi_i} = \Eff{X/S,\mu} \text{ for any }i\in I(\mu)
    \]
    by \eqref{eq:Pi_i_in_MoriChamber} and \eqref{eq:equiv_MoriChamber}.

    Finally, assume $\Pi\cap \EffO{X/S,\mu}\neq\emptyset$. Then by \cite[Corollary 6.3.2]{Roc70}, we have
    \[
    \Pi^\circ \cap \EffO{X/S,\mu}\neq\emptyset.
    \]
    Hence, by \cite[Theorem 6.5]{Roc70}, we obtain
    \[
    \overline{\Pi\cap\EffO{X/S,\mu}} = \Pi \cap \overline{\Eff{X/S,\mu}}.
    \]
    Together with the inclusions \eqref{eq:inclusions} and since $\bigcup_{i\in I(\mu)} \overline{\Pi}_i$ is closed, this yields \eqref{eq:decomp_intersect_MoriChamber}.
    \end{proof}

\begin{proposition}\label{prop:finite_decomp}
    Let $\pi\colon (X, B) \to S$ be a klt $K$-trivial fibration.
     Assume that for a very general point $s\in S$, 
good minimal models
exist for effective klt pairs on the fibre $X_s$.   
    
Let $\Pi\subset \Eff{X/S}$ be a polyhedral cone. Then there is a finite chamber decomposition
    \[
    \Pi = \bigcup_{i=1}^m \Pi\cap \overline{\Eff{X/S,\varphi_i}},
    \]
    where every $\varphi_i\colon X\dashrightarrow X_i$ is an isomorphism class of a $\Q$-factorial birational contraction over $S$ such that $\EffO{X/S,\varphi_i}\cap \Pi \neq \emptyset$ and every closed chamber $\Pi\cap \overline{\Eff{X/S,\varphi_i}}$ is polyhedral.
    
    Moreover, if $\Pi$ is rational, then every closed chamber $\Pi\cap \overline{\Eff{X/S,\varphi_i}}$ is also rational.
\end{proposition}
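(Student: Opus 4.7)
The plan is to apply Theorem \ref{thm:ChoiShokurov} to decompose $\Pi$ into rational polyhedral pieces $\Pi_j$ each associated with a $\Q$-factorial birational contraction $\theta_j\colon X \dashrightarrow Y_j$ over $S$, then to regroup the $\Pi_j$ according to the isomorphism class over $S$ of $\theta_j$ using Lemma \ref{lem:decomp_indices}. First I would reduce to the case where $\Pi$ is rational polyhedral: for a merely polyhedral $\Pi$ one encloses it in a rational polyhedral cone $\Pi^R \subset \Eff{X/S}$ by slightly perturbing the finitely many extreme rays of $\Pi$ to rational effective classes, and the chamber decomposition of $\Pi$ then follows from the one for $\Pi^R$ by intersecting each chamber with $\Pi$.

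So assume $\Pi$ is rational polyhedral. Theorem \ref{thm:ChoiShokurov} yields a finite decomposition $\Pi = \bigcup_{j=1}^N \Pi_j$ with $\overline{\Pi}_j$ rational polyhedral, together with $\Q$-factorial birational contractions $\theta_j \colon X \dashrightarrow Y_j$ over $S$ such that $\theta_j$ is a good model over $S$ for $(X, B + \epsilon \Delta)$ whenever $\Delta \in \Pi_j$ and $\epsilon > 0$ is sufficiently small. Discard those $j$ with $\Pi_j \cap \EffO{X/S, \theta_j} = \emptyset$ and let $\varphi_1, \ldots, \varphi_m$ be representatives of the distinct isomorphism classes over $S$ of the remaining $\theta_j$; for each $i$ set $I(\varphi_i) := \{j : \theta_j \cong \varphi_i \text{ over } S\}$. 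By (\ref{eq:indexset}) of Lemma \ref{lem:decomp_indices} this set coincides with $\{j : \Pi_j \cap \EffO{X/S, \varphi_i} \neq \emptyset\}$, and in particular $\Pi \cap \EffO{X/S, \varphi_i} \neq \emptyset$ for every $i$.

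The hypothesis of (\ref{eq:decomp_intersect_MoriChamber}) in Lemma \ref{lem:decomp_indices} is therefore satisfied, giving
$$
\Pi \cap \overline{\Eff{X/S, \varphi_i}} = \bigcup_{j \in I(\varphi_i)} \overline{\Pi}_j
$$
for every $i$. Since the $I(\varphi_i)$ exhaust the remaining indices and the $\Pi_j$ cover $\Pi$, this yields the required decomposition $\Pi = \bigcup_{i=1}^m \Pi \cap \overline{\Eff{X/S, \varphi_i}}$.

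The main obstacle is to verify that each chamber $\Pi \cap \overline{\Eff{X/S, \varphi_i}}$ is polyhedral, and rational polyhedral when $\Pi$ is. The chamber is convex, being the intersection of two convex cones, and by the displayed identity it equals a finite union of rational polyhedral cones. A convex set which is a finite union of polyhedral cones must coincide with the conic hull of the finite collection of extreme rays of the pieces, and is therefore itself polyhedral; if the pieces are all rational polyhedral, the resulting cone is rational polyhedral too. This handles both the polyhedrality claim and its rational refinement.
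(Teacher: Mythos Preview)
Your argument is correct and follows essentially the same route as the paper: reduce to the rational polyhedral case by enclosing $\Pi$ in a rational polyhedral cone inside $\Eff{X/S}$, apply Theorem~\ref{thm:ChoiShokurov}, regroup the pieces via Lemma~\ref{lem:decomp_indices}, and conclude polyhedrality from the fact that a convex cone equal to a finite union of (rational) polyhedral cones is itself (rational) polyhedral. Two minor remarks: your ``perturbation'' of extreme rays does not literally guarantee $\Pi\subset\Pi^R$---the paper instead uses that every effective $\R$-class is a non-negative real combination of integral effective classes, so $\Eff{X/S}\subset\Eff{X/S}^+$; and your discarding step is in fact vacuous, since applying \eqref{eq:indexset} with $\mu=\theta_j$ gives $j\in I(\theta_j)$ and hence $\Pi_j\cap\EffO{X/S,\theta_j}\neq\emptyset$ for every $j$.
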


    \begin{proof}
    We start by the following claim.

    \medskip
    
    {\em Claim.} We may assume that $\Pi\subset\Eff{X/S}$ is a rational polyhedral cone.

     {\it Proof of the Claim.}  We first notice that  from Lemma \ref{lem:rel_to_abs}(a) we deduce 
        $$\Eff{X/S}\subset \Eff{X/S}^+.$$ 
  Indeed, if $E$ is an $\mathbb \R$-divisor which is $\pi$-effective, by Lemma \ref{lem:rel_to_abs}(a) we may actually assume that $E$ is effective, i.e.\ 
  $E=\sum_k a_k E_k$ with $E_k$ effective Cartier divisor and $a_k\in \mathbb Q_{>0}$ for any $k$. As $E_k\in \Eff{X}^+\subset \Eff{X/S}^+$, we are done.  
        Hence, by enlarging $\Pi$ we may assume that $\Pi$ is a rational polyhedral cone. 
        
        To prove the claim, it suffices to show that if the statement of the proposition holds for a polyhedral cone $\Pi'\subset \Eff{X/S}$, then it also holds for any polyhedral subcone $\Pi\subset \Pi'$. Indeed, the finiteness of the decomposition for $\Pi$ follows from the finiteness of the decomposition for $\Pi'$. The polyhedrality of the closed chambers in the decomposition for $\Pi$ follows from the fact that
        \[
        \Pi \cap \overline{\Eff{X/S,\varphi_i}} = \Pi\cap \big( \Pi'\cap \overline{\Eff{X/S,\varphi_i}} \big)
        \]
        and the intersection of two polyhedral cones is a polyhedral cone. This proves the claim.

        \medskip
    
        Now we can apply Theorem \ref{thm:ChoiShokurov} to obtain a finite decomposition
        \[
        \Pi = \bigcup_{i=1}^m \Pi_i
        \]
        such that for each $1\leq i\leq m$, the closure $\overline{\Pi}_i$ is a rational polyhedral cone and there exists a $\Q$-factorial birational contraction $\varphi_i\colon X\dashrightarrow X_i$ over $S$ that is a good minimal model for any divisor class in $\Pi_i$.

        With the notation from Lemma \ref{lem:decomp_indices}, by \eqref{eq:equiv_MoriChamber} we infer that two relative Mori chambers $\Eff{X/S,\mu}$ and $\Eff{X/S,\mu'}$ of $\Q$-factorial birational contractions $\mu$ and $\mu'$  are distinct if and only if the two index sets $I(\mu),I(\mu')\subset\{1,\dots,m\}$ are disjoint. Hence, by \eqref{eq:decomp_intersect_MoriChamber} there are finitely many relative Mori chambers $\Eff{X/S,\mu}$ of $\Q$-factorial birational contractions $\mu$ such that $\EffO{X/S,\mu}\cap \Pi\neq \emptyset$. 
        Thus we obtain the finiteness of the decomposition for $\Pi$, i.e.
        \[
    \Pi = \bigcup_{\substack{1\leq i\leq m,\\ \EffO{X/S,\varphi_i}\cap \Pi\neq \emptyset} }\Pi\cap \overline{\Eff{X/S,\varphi_i}}.
    \]

    For the polyhedrality of the closed chambers in the decomposition for $\Pi$, take a relative Mori chamber  of a $\Q$-factorial birational contraction $\Eff{X/S,\mu}$ such that $\EffO{X/S,\mu}\cap \Pi\neq \emptyset$. By \eqref{eq:decomp_intersect_MoriChamber} we have
    \[
    \Pi \cap \overline{\Eff{X/S,\mu}}=\bigcup_{i\in I(\mu)} \overline{\Pi}_i.
    \]
    In the above equality, since the right hand side is a finite union of rational polyhedral cone and the intersection on the left hand side is a convex cone, we infer that $\Pi \cap \overline{\Eff{X/S,\mu}}$ is a rational polyhedral cone, as desired.
    \end{proof}

    \begin{proposition}\label{prop:mov_implies_nef}
    Let $\pi\colon (X, B) \to S$ be a klt $K$-trivial fibration as in the Set-up \ref{setup-relcone}.
     Assume that for a very general point $s\in S$, 
good minimal models
exist for effective klt pairs on the fibre $X_s$.       
    
If there is a polyhedral cone $\Pi\subset \Eff{X/S}$ satisfying
        \begin{equation}\label{eq:presquebir}
        \MovO{X/S}\subset \PsAut(X/S, B) \cdot \Pi,
        \end{equation}
        then the following statements hold.
        \begin{enumerate}[(a)]
        \item Up to isomorphism over $S$, there exist only finitely many $X'$ arising as SQM of $X$ over $S$.
        \item Let $X \dashrightarrow X'$ be a SQM over $S$. Then the 
        relative nef cone conjecture holds for $X'/S$.
        \end{enumerate}
    \end{proposition}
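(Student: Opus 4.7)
The strategy adapts the absolute argument of \cite[Theorem 1.5]{GLSW24} to the relative setting using the geography of models developed in Section \ref{s:geo}. The starting point is to apply Proposition \ref{prop:finite_decomp} to the polyhedral cone $\Pi \subset \Eff{X/S}$, obtaining a finite chamber decomposition
\[
\Pi = \bigcup_{i=1}^m \Pi \cap \overline{\Eff{X/S, \varphi_i}},
\]
where each $\varphi_i\colon X \dashrightarrow X_i$ is a $\Q$-factorial birational contraction over $S$ and each closed chamber is polyhedral. A contraction $\varphi_i$ is an SQM exactly when it has no exceptional divisors, equivalently when $\EffO{X/S, \varphi_i}$ meets $\MovO{X/S}$; this will single out the SQM-chambers among the $\varphi_i$.

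For (a): given any SQM $\mu\colon X \dashrightarrow Y$ over $S$, pick a class $\alpha \in \EffO{X/S,\mu} \cap \MovO{X/S}$. By the hypothesis (\ref{eq:presquebir}) there is $g \in \PsAut(X/S,B)$ with $(g^{-1})^*\alpha \in \Pi$, so $\Pi$ meets the interior of the chamber $\Eff{X/S,\mu\circ g^{-1}}$. By Lemma \ref{lem:isom_MoriChamber} and the chamber decomposition, $\mu\circ g^{-1}$ is isomorphic over $S$ to some $\varphi_i$, so the target of $\mu$ is isomorphic over $S$ to one of the finitely many $X_i$. This yields (a).

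For (b), let $\mu\colon X \dashrightarrow X'$ be an SQM and set $B':=\mu_*B$. Pushing $\Pi$ forward, $\Pi':=\mu_*\Pi \subset \Eff{X'/S}$ is polyhedral and inherits the covering property $\MovO{X'/S} \subset \PsAut(X'/S,B') \cdot \Pi'$, so we may replace $(X,B)$ by $(X',B')$ and argue on $X'$ directly. Applying Proposition \ref{prop:finite_decomp} on $X'$, isolate the finite subset $J$ of indices for which the target is isomorphic to $X'$ over $S$; after fixing such isomorphisms, these chambers have the form $h_j^*\Nefe{X'/S}$ for finitely many $h_j \in \PsAut(X'/S,B')$. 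The crucial point is that the stabiliser of $\Nefe{X'/S}$ in $\PsAut(X'/S,B')$ equals $\Aut(X'/S)$: this follows from Lemma \ref{lemma-regular} applied with $\mu_1=\sigma$, $\mu_2=\id$ and two ample classes with $E_i=0$. Given $\alpha \in \Nefe{X'/S}^\circ$, write $\alpha=g^*\beta$ with $\beta \in \Pi'$; then $\beta$ lies in the interior of the Mori chamber $(g^{-1})^*\Nefe{X'/S}$, which must coincide with some $h_j^*\Nefe{X'/S}$, so $g^{-1}=\sigma\circ h_j$ for some $\sigma \in \Aut(X'/S)$. Setting $\gamma := (h_j^{-1})^*\beta$, we get $\gamma \in \bigl((h_j^{-1})^*\Pi'\bigr) \cap \Nefe{X'/S}$ and $\alpha=(\sigma^{-1})^*\gamma$, hence
\[
\Nefe{X'/S}^\circ \subset \Aut(X'/S) \cdot \Pi_0, \qquad \Pi_0 := \bigcup_{j\in J}\bigl((h_j^{-1})^*\Pi'\bigr)\cap \Nefe{X'/S},
\]
where $\Pi_0$ is polyhedral (and rational, by Theorem \ref{thm:intersect_nef}, after enlarging $\Pi$ to be rational as in the proof of Proposition \ref{prop:finite_decomp}). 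By Remark \ref{rem:setup} the cone $\Mov{X'/S}$ is non-degenerate, hence so is $\Amp{X'/S}$; Lemma \ref{lem:loo} then produces a rational polyhedral fundamental domain for the action of $\Aut(X'/S)$ on $\Nefp{X'/S}$, and a standard argument using Theorem \ref{thm:ChoiShokurov} (applied to rational points of $\Nefe{X'/S}$) identifies $\Nefp{X'/S}$ with $\Nefe{X'/S}$.

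The main obstacle is Step 4/5: showing that only finitely many $\PsAut$-translates of Mori chambers from $\Pi'$ are needed to cover the identity chamber $\Nefe{X'/S}$, and that the redundancy is exactly controlled by $\Aut(X'/S)$. This relies on the sharp stabiliser statement (itself a consequence of the relative regularity criterion Lemma \ref{lemma-regular}), together with the compatibility between the global chamber structure in $\Pi$ and the local chamber structure around $\Nefe{X'/S}$. A secondary technical issue is the passage from the $+$-version to the $e$-version of the nef cone conjecture, which is handled by invoking the good minimal model hypothesis on the fibres.
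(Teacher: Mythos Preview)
Your proposal is correct and follows essentially the same route as the paper: both apply Proposition \ref{prop:finite_decomp} to $\Pi$, deduce (a) by translating an arbitrary SQM-chamber into $\Pi$ via a pseudoautomorphism and matching it to one of the finitely many chambers, and for (b) transport the set-up to $X'$, collect the finitely many translates $(h_j^{-1})^*\Pi'\cap\Nef{X'/S}$ (the paper's $(g_j^{-1})^*\Pi\cap\Nef{X/S}$), show $\Amp{X'/S}\subset\Aut(X'/S)\cdot\Sigma$ via the stabiliser identification, and conclude by Looijenga. One small slip: your $\Pi_0$ is a finite \emph{union} of polyhedral cones, which is not convex, hence not polyhedral; take the convex hull instead (as the paper does with its $\Sigma$), and for the passage between $\Nefe{X'/S}$ and $\Nefp{X'/S}$ the paper simply cites \cite[Lemma 5.1(1)]{LZ22} rather than invoking Theorem \ref{thm:ChoiShokurov}.
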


    \begin{proof}
        Let $\Pi\subset \Eff{X/S}$ be a polyhedral cone satisfying \eqref{eq:presquebir}.
        By Proposition \ref{prop:finite_decomp}
there is a finite chamber decomposition
    \[
    \Pi = \bigcup_{i=1}^m \Pi\cap \overline{\Eff{X/S,\varphi_i}},
    \]        
where the $\varphi_i$ are birational contractions over $S$.        
In particular, after renumbering,  there are finitely many SQM $\varphi_i\colon X\dashrightarrow X_i$ over $S$, say $1\leq i\leq m' \leq m$, such that \[\Pi\cap \EffO{X/S,\varphi_i}\neq \emptyset \text{ for any }1\leq i\leq m'.\]

        We first show (a).         
         Let $X'\to S$ be a klt $K$-trivial fibration arising as an SQM $\alpha \colon X\dashrightarrow X'$ over $S$. Since $\alpha$ is small the set $\Exc(\alpha)$ contains no divisors, hence we have $\Eff{X/S;\alpha} = \alpha^* \Nefe{X'/S}$. Hence, by Remark \ref{rem:interior}, we obtain
        $\EffO{X/S,\alpha} = \alpha^*\Amp{X'/S}$.
        
        By \eqref{eq:presquebir}, there exists an element $g\in \PsAut(X/S)$ such that $\Pi\cap \EffO{X/S,\alpha\circ g}\neq \emptyset$, as
        \[
        \EffO{X/S,\alpha\circ g} = g^* \EffO {X/S,\alpha} = g^* \alpha^*\Amp{X/S}\subset \MovO{X/S}.
        \]
        Therefore, by Lemma \ref{lem:decomp_indices}, item (\ref{eq:decomp_intersect_MoriChamber}) 
        there exists $i\in\{1,\dots,m\}$ such that $\EffO{X/S,\alpha\circ g} \cap \Eff{X/S,\varphi_i}\not= \emptyset$. Thus by \cite[Corollary 6.3.2]{Roc70} and Lemma \ref{lem:isom_MoriChamber}, we have $\Eff{X/S,\alpha\circ g} = \Eff{X/S,\varphi_i}$, and $X'$ and $X_i$ are isomorphic over $S$, which yields (a).

        Now we show (b). Let $(X',B')$ be a representative of one of the finitely many isomorphism classes of SQM of $(X,B)$ over $S$. We observe that the existence of
        good minimal models for effective klt pairs on the fibre $X_s$ easily implies the same for the       
        very general fibre $X'_s$ of $X'\to S$. Hence we set $X=X'$ in what follows. 
        
        Let $J$ be a finite index subset of $\{1,\dots,m\}$ such that for any $j\in J$ there exists $g_j\in\PsAut(X/S)$ satisfying $\Eff{X/S,g_j}=\Eff{X/S,\alpha_j}$, where the $\alpha_j:X\dashrightarrow X_j$ are the finitely many SQMs of $X$ over $S$ (up to isomorphism) given by (a). Note that the $g_j$'s exist by hypothesis (\ref{eq:presquebir}), Lemma  \ref{lem:decomp_indices} and Lemma \ref{lem:isom_MoriChamber}. By Theorem \ref{thm:intersect_nef}, for any $j\in J$, the intersection
        \[
        (g_j^{-1})^* \Pi \cap \Nef{X/S} \subset \Eff{X/S}
        \]
        is a polyhedral cone. Let
        \[\Sigma \coloneqq \textrm{conv}\{  (g_j^{-1})^* \Pi \cap \Nef{X/S}\mid j\in J\}.\]
        Since $J$ is finite, the cone $\Sigma$ is also polyhedral and contained in $\Nefe{X/S}$.

         It suffices to show that
        \[
        \Amp{X/S}\subset \Aut(X/S)\cdot \Sigma.
        \]
        Indeed  by \cite[Lemma 5.1(1)]{LZ22} we have $\Nefe{X/S}\subset\Nefp{X/S}$, we can apply Looijenga's result Lemma \ref{lem:loo} to deduce the  relative  cone conjecture for $\Nefe{X/S}$.
        
        Let $D\in \Amp{X/S}$. By \eqref{eq:presquebir} there exists an element $g\in \PsAut(X/S)$ such that $g^* D \in \Pi$. In particular, we have $g^*\Amp{X/S}\cap \Pi \neq \emptyset$. Since $g$ is small, we infer that
        \[ \EffO{X/S,g} \cap \Pi = g^*\Amp{X/S} \cap \Pi \neq \emptyset.\]
        Therefore, by Lemma \ref{lem:decomp_indices} (see also \eqref{eq:equiv_MoriChamber}), there exists $j\in J$ such that $\EffO{X/S,g} \cap\Eff{X/S,g_j}\not=\emptyset$. Hence, by \cite[Corollary 6.3.2]{Roc70} we have $\EffO{X/S,g} \cap\EffO{X/S,g_j}\not=\emptyset$. Thus $(X,g)$ and $(X,g_j)$ are isomorphic over $S$ by Lemma \ref{lem:isom_MoriChamber}, and we obtain $g \circ g_j^{-1}\in \Aut(X/S)$. In particular, we have $(g\circ g_j^{-1})^* D \in \Nef{X/S}$.
        Moreover, $g$ and $g_j$ being isomorphisms in codimension one, 
        we have
        \[
        (g\circ g_j^{-1})^* D = (g_j^{-1})^* g^* D \in (g_j^{-1})^* \Pi.
        \]
        Thus we have  $(g\circ g_j^{-1})^* D \in \Sigma$ and finally
        conclude that $\Amp{X/S}\subset \Aut(X/S)\cdot \Sigma$.     
    \end{proof}

    \begin{corollary}
        Let $\pi\colon (X,B) \to S$ be a klt $K$-trivial fibration as in the Set-up \ref{setup-relcone}.
     Assume that for a very general point $s\in S$, 
good minimal models
exist for effective klt pairs on the fibre $X_s$.           
     If $\Mov{X/S}$ is non-degenerate and
the action of $\PsAut(X/S, B)$ on $\Move{X/S}$ admits a rational polyhedral fundamental domain,  then the following statements hold.
        \begin{enumerate}[(a)]
        \item Up to isomorphism over $S$, there exist only finitely many $X'$ arising as SQM of $X$ over $S$.
        \item The relative nef cone conjecture holds for each klt $K$-trivial fibration $X'\to S$ obtained by an SQM over $S$ from $X$.
        \end{enumerate}
    \end{corollary}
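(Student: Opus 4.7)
The plan is to reduce directly to Proposition \ref{prop:mov_implies_nef}, taking as the cone $\Pi$ in that statement the given rational polyhedral fundamental domain. Concretely, let $\Pi_0 \subset \Move{X/S}$ be a rational polyhedral fundamental domain for the action of $\PsAut(X/S,B)$. By the defining property of a fundamental domain (Conjecture \ref{conj:cone}(2)(a)), one has
$$\Move{X/S} = \bigcup_{g \in \PsAut(X/S,B)} g^* \Pi_0 = \PsAut(X/S,B) \cdot \Pi_0,$$
so to verify the hypothesis \eqref{eq:presquebir} of Proposition \ref{prop:mov_implies_nef} for $\Pi = \Pi_0$, it is enough to prove the inclusion $\MovO{X/S} \subset \Move{X/S}$.

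For this inclusion I would argue as follows. Any class in $\MovO{X/S}$ lies in the interior of $\Mov{X/S}$, and since every $\pi$-movable divisor is $\pi$-effective (directly from the definition of movability), we have $\Mov{X/S} \subset \overline{\textrm{Eff}}(X/S)$; hence such a class also belongs to the interior of $\overline{\textrm{Eff}}(X/S)$. By Remark \ref{rem:interior} that interior is exactly $\Bigcone{X/S}$, so the class is $\pi$-big. Lemma \ref{lem:rel_to_abs}(c) then yields a genuinely big representative up to $\pi$-linear equivalence, which is effective by Kodaira's lemma; combined with the identification $N^1(X/S)_\Q \simeq \pic(X/S)_\Q$ from Remark \ref{rem:setup}, this shows that the class itself lies in $\Eff{X/S}$. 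Therefore $\MovO{X/S} \subset \Mov{X/S} \cap \Eff{X/S} = \Move{X/S}$, as required.

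Once this inclusion is in place, Proposition \ref{prop:mov_implies_nef} applied with $\Pi = \Pi_0$ delivers both (a) and (b). The remaining hypotheses of the corollary (non-degeneracy of $\Mov{X/S}$ and the existence of good minimal models on $X_s$) are absorbed inside Proposition \ref{prop:mov_implies_nef}, via the chamber decomposition of Proposition \ref{prop:finite_decomp} and Looijenga's Lemma \ref{lem:loo}. I do not expect any real obstacle here: the corollary is essentially a clean repackaging of Proposition \ref{prop:mov_implies_nef}, whose only genuine content in the present step is the mild relative \emph{big-implies-effective} inclusion $\MovO{X/S} \subset \Move{X/S}$ established above.
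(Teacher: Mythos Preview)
Your proposal is correct and follows essentially the same route as the paper: take the rational polyhedral fundamental domain as the cone $\Pi$ and invoke Proposition \ref{prop:mov_implies_nef}. The paper's proof is a single sentence that leaves the inclusion $\MovO{X/S}\subset\Move{X/S}$ implicit, whereas you spell it out; one small simplification is that the detour through Lemma \ref{lem:rel_to_abs}(c) and Kodaira's lemma is unnecessary, since a $\pi$-big $\Q$-divisor is $\pi$-effective directly from the definition (the induced rational map being birational forces $\pi_*\sO_X(mD)$ to have positive rank).
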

    \begin{proof}
        By hypothesis we have a rational polyhedral cone $\Pi\subset \Eff{X/S}$ such that the inclusion (\ref{eq:presquebir}) is satisfied. 
        So the statement (a) follows from Proposition \ref{prop:mov_implies_nef}.
    \end{proof}

    We now state a result which is essentially due to \cite{Kaw97}.

    \begin{proposition}\label{prop:decop_eff_and_mov}
    Let $\pi\colon (X, B) \to S$ be a klt $K$-trivial fibration. Then we have
    \begin{enumerate}[(A)]
    \item the inclusions
    \[
    \Bigcone{X/S}\subset\bigcup_{(Y,\mu)\text{}} \Eff {X/S,\mu} \subset \Eff{X/S},
    \]
    where the indices $(Y,\mu)$ range over all isomorphism classes of $\Q$-factorial birational contractions $\mu \colon X\dashrightarrow Y$
 over $S$ and the cones in the union have disjoint interiors.
    \item the inclusions
    \[
    \MovO{X/S}\subset\bigcup_{(Z,\alpha)\text{}} \Eff {X/S,\alpha} \subset \Move{X/S},
    \]
    where the indices $(Z,\alpha)$ range over all isomorphism classes of SQM 
$\alpha \colon X\dashrightarrow Z$ over $S$ and the cones in the union have disjoint interiors.
    \end{enumerate}

    Moreover, 
  assuming the existence of good minimal models
for effective klt pairs on the fibre $X_s$ for a very general point $s\in S$,          
 we have the equalities
    \begin{enumerate}[(a)]
        \item 
        \[
    \bigcup_{(Y,\mu)\text{}} \Eff {X/S,\mu} = \Eff{X/S},
    \]
    \item and
        \[
    \bigcup_{(Z,\alpha)\text{}} \Eff {X/S,\alpha} = \Move{X/S}.
    \]
    \end{enumerate}
    where the indices $(Y,\mu)$ and $(Z,\alpha)$ are as above. 
\end{proposition}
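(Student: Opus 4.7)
\smallskip

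\noindent\textbf{Proof plan.} I first establish (A) and (B) without the good minimal models hypothesis, then deduce (a) and (b) using Proposition \ref{prop:finite_decomp}.

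For (A), the second inclusion was noted in the remark after Definition \ref{definition-chambres}, and disjointness of interiors of distinct Mori chambers is Lemma \ref{lem:isom_MoriChamber}. For the first inclusion, given $[D]\in\Bigcone{X/S}$, Lemma \ref{lem:rel_to_abs} lets us replace $D$ with an effective $\pi$-big representative modulo $\sim_{\R,\pi}$; pick $\epsilon>0$ such that $(X,B+\epsilon D)$ is klt. Since $K_X+B+\epsilon D\equiv_\pi\epsilon D$ is $\pi$-big, a relative $(K_X+B+\epsilon D)$-MMP (terminating by BCHM for klt pairs with $\pi$-big boundary) yields a good minimal model $\mu\colon X\dashrightarrow Y$ over $S$; the negativity lemma gives $D\sim_{\R,\pi}\mu^*(\mu_*D)+F$ with $F$ an effective $\mu$-exceptional $\R$-divisor and $\mu_*D$ nef over $S$, whence $[D]\in\Eff{X/S,\mu}$.

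For (B), disjointness is again Lemma \ref{lem:isom_MoriChamber}. For the second inclusion, an SQM $\alpha\colon X\dashrightarrow X'$ has no exceptional divisors so $\Eff{X/S,\alpha}=\alpha^*\Nefe{X'/S}$; since $\alpha$ is an isomorphism in codimension one, the strict transform of a $\pi$-semiample effective class on $X'$ remains $\pi$-movable on $X$, giving $\Eff{X/S,\alpha}\subset\Move{X/S}$. For the first inclusion, given $[D]\in\MovO{X/S}$, I run the MMP as in (A) and rule out divisorial contractions: a divisorial contraction on an extremal ray $R$ would satisfy $D\cdot R<0$ by $K_X+B\equiv_\pi 0$; but picking $A$ $\pi$-ample and $\delta>0$ small so that $[D-\delta A]\in\MovO{X/S}$, a general effective representative $D'\sim_{\R,\pi}D-\delta A$ has no fixed divisorial component, and among the curves numerically equivalent to $R$ covering the contracted divisor at least one avoids $\supp(D')$, yielding $D'\cdot R\geq 0$ and contradicting $(D-\delta A)\cdot R<0$. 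Hence only flips occur and $\mu$ is an SQM.

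For (a) and (b), under the good minimal models hypothesis, one inclusion comes from (A)/(B). Conversely, for (a), given $[D]\in\Eff{X/S}$, write $[D]=\sum a_j[D_j]$ with $a_j\in\R_{>0}$ and $D_j$ effective $\Q$-divisors, and let $\Pi$ be the rational polyhedral cone they generate; Proposition \ref{prop:finite_decomp} decomposes $\Pi$ into closed Mori chambers, placing $[D]$ in some $\Eff{X/S,\varphi_i}$. For (b), fix a $\pi$-ample $[A]$, set $\Pi'\coloneqq\Pi+\R^+[A]\subset\Eff{X/S}$, and apply Proposition \ref{prop:finite_decomp} to $\Pi'$; the approximants $[D_n]\coloneqq[D]+\tfrac{1}{n}[A]\in\MovO{X/S}\cap\Pi'$ each lie in the interior of some chamber, which must be an SQM chamber (a divisorial-contraction chamber has its exceptional divisor as a fixed divisorial component of all its interior classes, incompatible with movability), and by finiteness of chambers in $\Pi'$ and closedness, $[D]=\lim[D_n]$ lies in an SQM chamber. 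The main technical obstacle is the MMP analysis in (B), which crucially exploits the interior movable hypothesis on $[D]$ together with $K_X+B\equiv_\pi 0$ to prevent divisorial contractions.
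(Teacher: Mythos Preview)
Your argument for (A) matches the paper's. For (B), the paper simply invokes \cite[Theorem 2.3]{Kaw97} (together with \cite{BCHM10} and Theorem \ref{thm:HX13_R_divisor}), whereas you give a direct MMP argument; this is a legitimate alternative, but as written it only rules out a divisorial contraction at the \emph{first} step. You must observe that a flip, being an isomorphism in codimension one, preserves both the $K$-triviality $K+B\equiv_\pi 0$ and membership in $\MovO{\,\cdot\,/S}$, so the same argument applies on every intermediate model. (The detour through $D-\delta A$ is also unnecessary: since $\MovO{X/S}$ equals the interior of the honest movable cone, $[D]$ itself has an effective representative with no divisorial component containing the contracted divisor.)

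For (a) and (b) the paper takes a much more direct route than you do: it simply applies Theorem \ref{thm:HX13_R_divisor} to run a $(K_X+B+\epsilon D)$-MMP for \emph{any} $\pi$-effective $D$ (not just $\pi$-big), and for (b) again cites \cite[Theorem 2.3]{Kaw97} to conclude the output is an SQM. Your route through Proposition \ref{prop:finite_decomp} is more circuitous---that proposition is itself proved via the same MMP machinery---but for (a) it does work once you note that $[D]\in\overline{\Eff{X/S,\varphi_i}}$ together with $[D]\in\Eff{X/S}$ forces $(\varphi_i)_*D$ to be effective, hence $[D]\in\Eff{X/S,\varphi_i}$.

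There is, however, a genuine gap in your argument for (b). You assert that each $[D_n]$ ``lies in the interior of some chamber'', but neither (B) nor Proposition \ref{prop:finite_decomp} gives this: part (B) only places $[D_n]$ in some $\Eff{X/S,\alpha_n}=\alpha_n^*\Nefe{Z_n/S}$, and $(\alpha_n)_*D_n$ is nef and big but need not be ample, so $[D_n]$ may sit on a wall. Without the interior claim, your criterion for recognising an SQM chamber does not apply. One fix is to use Theorem \ref{thm:ChoiShokurov} directly on $\Pi'$ to obtain, for each $[D_n]$, a good minimal model $\varphi_{j_n}$ chosen among finitely many; since any two minimal models of $(X,B+\epsilon D_n)$ are isomorphic in codimension one and one of them (namely $\alpha_n$ from (B)) is an SQM, so is $\varphi_{j_n}$. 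Then pigeonhole and the effectivity of $(\varphi_j)_*D$ finish the argument. As written, though, the step is incomplete, and the paper's direct approach avoids this difficulty entirely.
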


\begin{proof}
    The cones in the union have disjoint interiors by Lemma \ref{lem:isom_MoriChamber}.

    Let $D\subset X$ be a $\pi$-effective $\R$-divisor. Then there exists an effective $\R$-divisor in the same class as $D$ in $N^1(X/S)_{\R}$ by Lemma \ref{lem:rel_to_abs}(a). Therefore, we may assume that $D$ is effective.
        Since $X$ is klt, by replacing $D$ with $\epsilon D$ for $\epsilon >0$ sufficiently small, we may assume that the pair $(X,D)$ is klt. 
    
  {\em Proof of (A).} We first consider the case when $D$ is $\pi$-big. Since $(K_{X_s}, B_s)$ is trivial and $D_s$ is big, the pair $(X_s, B_s+D_s)$ has a good minimal
  model by \cite[Theorem 1.2]{BCHM10}. Thus by Theorem \ref{thm:HX13_R_divisor} we have a minimal model 
  $$
  \mu\colon (X, B+D)\dashrightarrow \big(Y, D_Y \coloneqq \mu_*(B+D)\big)$$ 
  over $S$. 
    Hence,
    \[
    D \equiv_\pi K_X+B+ D \equiv_\pi \mu^*(K_Y+D_Y)+\sum_i a_i E_i,
    \]
    where $K_Y+D_Y$ is nef over $S$ and the $E_i$'s run through all the prime divisor contracted by $\mu$ and $a_i> 0$. Note that $K_Y+D_Y$ is also $\pi$-effective, as $D$ is effective.
    Hence, we have
    \[
    \Bigcone{X/S}\subset\bigcup_{(\mu, Y)\text{}} \Eff {X/S, \mu},
    \]
where the indices $(Y,\mu)$ range over all isomorphism classes of $\Q$-factorial birational contractions $\mu \colon X\dashrightarrow Y$ over $S$.

 {\em Proof of (a).} Now assume the existence of good minimal models
for effective klt pairs on the fibre $X_s$.
 
Then the proof of $(A)$ applies for any $\pi$-effective divisor $D$ on $X$.
This gives
    \[
    \Eff{X/S}\subset\bigcup_{(\mu, Y)\text{}} \Eff{X/S,\mu},
    \]
    with the same set of indices and thus we obtain the equality.

{\em Proof of (B).} 
Note that for any small birational map $\alpha\colon X\dashrightarrow Z$, we have 
\begin{equation}
\label{easyinclusion}
\Eff{X/S,\alpha} = \alpha^*\Nefe{Z/S}\subset \Move{X/S}.
\end{equation}
  Since $\MovO{X/S}\subset \Bigcone{X/S}$, the inclusion 
    \[
    \MovO{X/S}\subset\bigcup_{(Z,\alpha)\text{}} \Eff{X/S,\mu} = \bigcup_{(Z,\alpha)\text{}} \alpha^*\Nefe{Z/S}
    \] follows from \cite[Theorem 2.3]{Kaw97}, taking \cite[Theorem 1.2]{BCHM10} and Theorem \ref{thm:HX13_R_divisor} into account. 
Combined with \eqref{easyinclusion} this gives the statement.

{\em Proof of (b).} 
Now assume the existence of good minimal models
for effective klt pairs on the fibre $X_s$. Then using again Theorem \ref{thm:HX13_R_divisor} we  obtain the existence of a minimal model  over $S$
for any klt pair $(X, B+D)$.

    Hence, again by \cite[Theorem 2.3]{Kaw97} we obtain the inclusion
    \[
    \Move{X/S}\subset\bigcup_{(Z,\alpha)\text{}} \Eff{X/S,\mu} = \bigcup_{(Z,\alpha)\text{}} \alpha^*\Nefe{Z/S},
    \] 
    which yields the equality.
\end{proof}

    \begin{proposition}\label{prop:nef_implies_mov}
        Let $\pi\colon (X, B) \to S$ be a klt $K$-trivial fibration as in the Set-up \ref{setup-relcone}.
             Assume that for a very general point $s\in S$, 
good minimal models
exist for effective klt pairs on the fibre $X_s$.              
Assume also that the following statements hold:
        \begin{enumerate}[(a)]
        \item up to isomorphism over $S$, there exist only finitely many $X'$ arising as SQM of $X$ over $S$;
        \item the relative nef cone conjecture holds for each klt $K$-trivial fibration $X'\to S$ obtained by an SQM over $S$ from $X$.
        \end{enumerate}
Then the action of $\PsAut(X/S, B)$ on $\Move{X/S}$ admits a rational polyhedral fundamental domain.
    \end{proposition}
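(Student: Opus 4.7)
The strategy is to combine hypotheses (a) and (b) with the Mori chamber decomposition of $\Move{X/S}$ provided by Proposition \ref{prop:decop_eff_and_mov}(b), and then to invoke Looijenga's Lemma \ref{lem:loo}. Let $\alpha_i\colon X\dashrightarrow X_i$ for $i=1,\ldots,m$ be representatives of the finitely many isomorphism classes of SQMs of $X$ over $S$ guaranteed by (a), and set $B_i\coloneqq(\alpha_i)_*B$. By (b), each action of $\Aut(X_i/S,B_i)$ on $\Nefe{X_i/S}$ admits a rational polyhedral fundamental domain $\Pi_i$. Each $\alpha_i^*\Pi_i$ is contained in $\Eff{X/S,\alpha_i}\subset\Move{X/S}$ by \eqref{easyinclusion}, so
\[
\Pi\coloneqq\textrm{conv}\bigl(\alpha_i^*\Pi_i\mid 1\leq i\leq m\bigr)
\]
is a rational polyhedral cone contained in $\Move{X/S}$.

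The key step is to establish
\[
\MovO{X/S}\subset\PsAut(X/S,B)\cdot\Pi.
\]
Given $D\in\MovO{X/S}$, Proposition \ref{prop:decop_eff_and_mov}(b) yields an SQM $\alpha\colon X\dashrightarrow Z$ over $S$ with $D\in\alpha^*\Nefe{Z/S}$. By (a) there exists an isomorphism $\varphi\colon Z\xrightarrow{\sim}X_i$ over $S$ with $\alpha_i=\varphi\circ\alpha$; hence $D=\alpha_i^*D''$ for some $D''\in\Nefe{X_i/S}$. By the fundamental domain property of $\Pi_i$ there exist $g_i\in\Aut(X_i/S,B_i)$ and $\pi_i\in\Pi_i$ with $D''=g_i^*\pi_i$. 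The composition $g\coloneqq\alpha_i^{-1}\circ g_i\circ\alpha_i$ is an isomorphism in codimension one of $X$ over $S$ (being a composition of such maps) and preserves $B$ since $g_i$ preserves $B_i$; hence $g\in\PsAut(X/S,B)$. A direct computation, using that each $\alpha_i$ is small so that $(\alpha_i^{-1})^*\alpha_i^*=\mathrm{id}$ on $N^1(X_i/S)_{\R}$, gives $g^*(\alpha_i^*\pi_i)=\alpha_i^*g_i^*\pi_i=\alpha_i^*D''=D$, so $D\in g^*\Pi\subset\PsAut(X/S,B)\cdot\Pi$.

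Finally, by Remark \ref{rem:setup} the relative movable cone $\Mov{X/S}$ is non-degenerate, and the inclusion just established shows that the pair $\bigl(\Movp{X/S},\PsAut(X/S,B)\bigr)$ is of polyhedral type. Lemma \ref{lem:loo} then produces a rational polyhedral fundamental domain for the action of $\PsAut(X/S,B)$ on $\Movp{X/S}$; combined with Lemma \ref{lem:otherinc} ($\Move{X/S}\subset\Movp{X/S}$) this yields the desired fundamental domain on $\Move{X/S}$, in analogy with the final step in the proof of Proposition \ref{prop:mov_implies_nef}. The main obstacle I expect is the lifting step: checking that $g=\alpha_i^{-1}\circ g_i\circ\alpha_i$ genuinely lies in $\PsAut(X/S,B)$ and that the pullback identity $g^*(\alpha_i^*\pi_i)=D$ is valid, both of which rely crucially on the smallness of the $\alpha_i$; once this is in place the remaining ingredients are straightforward applications of results already proved in the paper.
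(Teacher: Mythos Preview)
Your approach is essentially the same as the paper's: build $\Pi$ as the convex hull of the $\alpha_i^*\Pi_i$, show that $\PsAut(X/S,B)\cdot\Pi$ covers the relevant movable cone using the chamber decomposition of Proposition~\ref{prop:decop_eff_and_mov}(b), then conclude via Lemma~\ref{lem:loo} together with Lemma~\ref{lem:otherinc}. The paper proves the slightly stronger covering $\Move{X/S}\subset\PsAut(X/S,B)\cdot\Pi$ rather than just $\MovO{X/S}$, but the interior already suffices for Looijenga.

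There is one genuine imprecision. Assumption~(a) only says that the \emph{target} $Z$ of an SQM $\alpha$ is isomorphic over $S$ to some $X_i$; it does \emph{not} furnish an isomorphism $\varphi$ satisfying $\alpha_i=\varphi\circ\alpha$. In general $\varphi\circ\alpha$ and the fixed representative $\alpha_i$ are two distinct SQMs $X\dashrightarrow X_i$, related by the pseudoautomorphism $\sigma\coloneqq\alpha_i^{-1}\circ\varphi\circ\alpha$ of $X$. Thus what you actually obtain is $D=(\varphi\circ\alpha)^*D''$, and after lifting $g_i$ you land on $(\varphi\circ\alpha)^*\pi_i=\sigma^*(\alpha_i^*\pi_i)$, which lies in $\sigma^*\Pi$ rather than in $\Pi$. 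The repair is immediate: absorb $\sigma$ into the acting group, so that $D$ lies in the $\PsAut(X/S,B)$-orbit of $\alpha_i^*\pi_i\in\Pi$. (The paper's own proof makes the analogous elision when it writes $\alpha^*f^*\Nefe{X_{i_0}/S}=\alpha_{i_0}^*\Nefe{X_{i_0}/S}$.) With this small adjustment your argument is complete and coincides with the paper's.
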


    \begin{proof}
        We fix $\alpha_i\colon X\dashrightarrow X_i$ for $1\leq i\leq m$ a set of representatives of the finitely many isomorphism classes of SQM of $X$ over $S$. By  assumption $(b)$, for any $i\in\{1,\dots,m\}$ there exists a rational polyhedral cone $\Pi_i \subset \Nefe{X_i/S}$ such that
        \begin{equation}\label{eq:assm_nef_conj}
            \Aut(X_i/S, B)\cdot \Pi_i = \Nefe{X_i/S}.
        \end{equation}

        Set 
        $$
        \Pi \coloneqq \textrm{conv}\{\alpha_i^* \Pi_i\mid 1\leq i\leq m\} \subset \Move{X/S}.
        $$ 
        Then $\Pi$ is a rational polyhedral cone as the convex hull of finitely many rational polyhedral cones. By  Lemma \ref{lem:otherinc} and Lemma \ref{lem:loo} it suffices to show
        \[
        \Move{X/S}\subset\PsAut(X/S, B)\cdot \Pi.
        \]
        Now by Proposition \ref{prop:decop_eff_and_mov}(b), it suffices to show that for any SQM $\alpha\colon X\dashrightarrow Z$ over $S$, we have 
        $$
\Eff{X/S, \alpha} =  \alpha^*\Nefe{X/S}\subset \PsAut(X/S, B)\cdot \Pi.
        $$
        By assumption $(a)$ there exists $i_0\in\{1,\dots,m\}$ such that we have an isomorphism $f\colon Z \to X_{i_0}$ over $S$. Then
        \begin{align*}
            \alpha^*\Nefe{Z/S} &= \alpha^* f^* \Nefe{X_{i_0}/S} = \alpha_{i_0}^* \Nefe{X_{i_0}/S}
            \\
            &= \alpha_{i_0}^* \big(\Aut(X_{i_0}/S, B)\cdot \Pi_{i_0}\big)
                    \end{align*} 
where the last equality is given by \eqref{eq:assm_nef_conj}.
Now observe that for any $g \in \Aut(X_{i_0}/S, B)$ we have
$$
\alpha_{i_0}^* g^* = \alpha_{i_0}^* g^* (\alpha_{i_0}^{-1})^* \alpha_{i_0}^*
= (\alpha_{i_0}^{-1} \circ g \circ \alpha_{i_0})^* \alpha_{i_0}^*
$$
and $\alpha_{i_0}^{-1} \circ g \circ \alpha_{i_0} \in \PsAut(X/S, B)$.
Thus we have
\[
\alpha_{i_0}^* \big(\Aut(X_{i_0}/S, B)\cdot \Pi_{i_0}\big)
= \PsAut(X/S, B) \alpha_{i_0}^* \Pi_{i_0} \subset \PsAut(X/S, B) \cdot \Pi.\qedhere
\]
\end{proof}

\section{Fibrations in IHS manifolds}
\label{section:HK}
In this section we treat the case of fibrations in IHS manifolds for which unconditional results can be obtained. 
We record the following basic fact. 

\begin{lemma} \label{lem:fait}
    Let $X$ be a $\Q$-factorial variety, $\pi\colon X\to S$  a fibration, $K\coloneqq\mathbb C(S)$ the function field of the base and $j\colon X_K\hookrightarrow X$ the natural inclusion. We have a surjective map
    \[
    r\colon N^1(X/S)_{\R}\to N^1(X_K)_{\R},
    \]
    which induces the surjective maps
    \[
     \Eff{X/S} \to \Eff{X_K} \text{ and } \Mov{X/S} \to \Mov{X_K}.
    \]
\end{lemma}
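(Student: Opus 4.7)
The plan is to define $r$ as the pull-back along the inclusion $j\colon X_K \hookrightarrow X$. For a Cartier divisor $D$ on $X$, $j^*D$ is a Cartier divisor on $X_K$, and this operation passes to $\R$-linear equivalence. To check descent to $N^1(X/S)_\R$, suppose $D \equiv_\pi 0$. Any irreducible curve $C_K \subset X_K$ spreads to a flat family of curves $\mathcal{C} \to U$ over some non-empty open $U \subset S$, with $C_K$ as generic fibre; since intersection numbers are constant in flat families, $j^*D \cdot C_K = D \cdot \mathcal{C}_s$ for any $s \in U$, which vanishes by hypothesis. Hence $r\colon N^1(X/S)_\R \to N^1(X_K)_\R$ is a well-defined $\R$-linear map.

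Surjectivity of $r$ and of its restriction to the effective cones is obtained by taking Zariski closures. Given a class $[D_K] \in N^1(X_K)_\R$, write $D_K = \sum_i a_i D_{K,i}$ with the $D_{K,i}$ prime Cartier divisors on $X_K$. The Zariski closure $\bar{D}_i \subset X$ is a Weil divisor, hence $\Q$-Cartier since $X$ is $\Q$-factorial, and satisfies $j^* \bar{D}_i = D_{K,i}$ by construction. Thus $r\big(\sum_i a_i [\bar{D}_i]\big) = [D_K]$. If moreover the $a_i$ can be chosen positive, i.e.\ $[D_K] \in \Eff{X_K}$, then each $\bar{D}_i$ is effective hence $\pi$-effective, so $\sum_i a_i [\bar{D}_i]$ lies in $\Eff{X/S}$.

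For the movable cones it suffices to produce a lift in the cone generated by $\pi$-movable divisors for every effective generator of $\textrm{Mov}(X_K)$, and then extend by linearity and closure. Let $D_K$ be an effective $\Q$-Cartier divisor on $X_K$ such that the base locus of $|mD_K|$ has codimension at least $2$ for some $m \geq 1$. Its Zariski closure $\bar{D}$ on $X$ is effective, $\Q$-Cartier and purely horizontal. The relative base locus of $|m\bar{D}|$ intersects $X_K$ exactly in the base locus of $|mD_K|$, which has codimension $\geq 2$; hence all of its divisorial components are vertical, i.e.\ project to proper closed subsets of $S$. Denoting by $F$ the divisorial (vertical) part of the relative base locus, the divisor $\bar{D}' \coloneqq \bar{D} - \frac{1}{m} F$ is $\pi$-effective and its relative base locus has no divisorial components, so $\bar{D}'$ is $\pi$-movable. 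Since $F$ is vertical, $j^*F = 0$ and $r([\bar{D}']) = [D_K]$, as required.

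The main obstacle is the movable case: the Zariski closure of a movable divisor on $X_K$ may fail to be $\pi$-movable because the relative base locus can acquire divisorial components concentrated over proper subvarieties of $S$. The strategy is to systematically subtract such vertical fixed components, which is harmless for the lifting problem since vertical divisors lie in $\ker r$ and so the correction does not alter the target class.
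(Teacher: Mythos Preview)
Your construction of $r$, the check that it is well defined on numerical classes, and the Zariski-closure argument for surjectivity and for the effective cones are all correct and give a pleasant hands-on proof of those parts.

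For the movable cone, the subtraction of the vertical fixed part is the right idea, but the phrase ``extend by linearity and closure'' hides a genuine gap. What you actually prove is that every movable $\Q$-class on $X_K$ lifts to a $\pi$-movable class, i.e.\ $r\big(\textrm{Mov}(X/S)\big) \supset \textrm{Mov}(X_K)$ for the cones \emph{before} taking closures. To conclude $r\big(\Mov{X/S}\big) \supset \Mov{X_K}$ you would need $r(\overline{C}) \supset \overline{r(C)}$, which is false for linear images of closed convex cones in general: for instance, projecting the closed cone $\{(x,y,z):z^2\le xy,\ x,y\ge 0\}\subset\R^3$ to the $(y,z)$-plane gives $\{y>0\}\cup\{0\}$, which is not closed. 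Geometrically the obstruction is that $\ker r$, spanned by vertical divisor classes, need not sit inside $\Mov{X/S}$ (think of a component of a reducible fibre), so you cannot correct a divergent sequence of lifts by vertical classes and stay in the cone. A minor related imprecision: ``the divisorial part of the relative base locus'' must be taken with multiplicities (the fixed divisor of the relative linear system), otherwise subtracting $\tfrac{1}{m}F$ may not kill the divisorial base locus.

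The paper's proof avoids this closure issue by a different reduction: it invokes a lemma of Li to the effect that for a sufficiently small open $U\subset S$ the restriction $N^1(X_U/U)_\R \to N^1(X_K)_\R$ is an \emph{isomorphism}. Once this holds, every divisor on $X_U$ that is vertical over $U$ is already numerically trivial relative to $U$; your subtraction trick then shows directly that $\textrm{Mov}(X_U/U)$ and $\textrm{Mov}(X_K)$ coincide as subsets of the same vector space, so their closures coincide as well and no limiting argument is needed. The passage from $X/S$ to $X_U/U$ is then handled by reference to Li's paper. If you want to repair your direct approach, this stabilisation of $N^1$ over a small open set is the missing ingredient.
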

\begin{proof}
    For any open set $U\subset S$ we have a natural surjective linear map: \[N^1(X/S)_{\R}\to N^1(X_U/U)_{\R},\ [D]\mapsto[D_{|U}].\] By \cite[Lemma 3.7]{Li23} for $U$ sufficiently small, we have $N^1(X_U/U)_{\R}\simeq N^1(X_K)_{\R}$ and thus the first statement follows. The surjections between the cones are proved in Step 1 of the proof of \cite[Theorem 1.3]{Li23}.
\end{proof}

\begin{remark*}
Since $R^1 \pi_* \sO_X=0$ (cf.\ Remark \ref{rem:setup}) we can apply
\cite[Corollary 5.14]{Kle05} to obtain $N^1(X_K)_{\Q}\simeq \pic(X_K)_{\Q}$. 
\end{remark*}

\begin{definition}
Let $K$ be a field of characteristic 0. Let $F$ be a smooth
projective variety over $K$ of even dimension $2n$. We say that $F$ is an irreducible symplectic manifold if 
\begin{itemize}
\item[-] $\pi_1^{\mbox{\tiny \'et}}(F_{\bar K})=\{1\}$,
\item[-] $H^0(F,\Omega^2_{F}) \simeq K$, and
\item[-] for $0 \neq \omega_K \in H^0(F,\Omega^2_{F})$ the exterior power
$\omega_K^{\wedge n}$ does not vanish. 
\end{itemize}
\end{definition}

\begin{lemma}\label{lem:cadorel}
Let $X$ be a $\Q$-factorial variety, $\pi:X\to S$  a fibration, $K\coloneqq\mathbb C(S)$ the function field of the base and $j\colon X_K\hookrightarrow X$ the natural inclusion. 
If the very general fibre of $\pi$ is simply connected, then $X_{\overline K}$ is simply connected. 
\end{lemma}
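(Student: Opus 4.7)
The plan is to reduce the statement to the smooth proper case and then invoke the classical specialization theorem for étale fundamental groups from SGA 1.

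First, I would restrict $\pi$ to a Zariski-dense open $V\subset S$ on which $\pi_V\colon X_V\to V$ is smooth (hence smooth and projective, since $\pi$ is projective). Such a $V$ exists by generic smoothness, and passing to $V$ does not alter the function field $K=\mathbb{C}(S)$, so the geometric generic fibre $X_{\overline K}$ coincides with that of $\pi_V$.

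Second, since the very general fibre of $\pi$ is simply connected, the set of points $s\in S(\mathbb{C})$ for which $X_s$ is simply connected in the analytic topology contains the complement of a countable union of proper closed subvarieties of $S$. Intersecting with $V(\mathbb{C})$, which is dense in $S(\mathbb{C})$ in both the Zariski and analytic topologies, Baire category lets me select a closed point $s_0\in V(\mathbb{C})$ at which $X_{s_0}$ is simultaneously smooth, projective, and simply connected as a complex manifold. Riemann existence then yields
$$
\pi_1^{\mbox{\tiny \'et}}(X_{s_0})\cong \widehat{\pi_1^{\mathrm{top}}(X_{s_0})}=\{1\}.
$$

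Finally, I would appeal to the specialization theorem for étale fundamental groups (SGA 1, Exposé X): for the smooth proper morphism $\pi_V\colon X_V\to V$ with geometrically connected fibres over the connected Noetherian $\mathbb{Q}$-scheme $V$, the specialization homomorphism
$$
\pi_1^{\mbox{\tiny \'et}}(X_{\bar\eta})\lra \pi_1^{\mbox{\tiny \'et}}(X_{\bar s_0})
$$
from a geometric generic point $\bar\eta$ of $V$ to the geometric point above $s_0$ is an isomorphism in characteristic zero. Since $X_{\bar\eta}\cong X_{\overline K}$, combining this with the previous step gives the desired triviality of $\pi_1^{\mbox{\tiny \'et}}(X_{\overline K})$. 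The argument is essentially a bookkeeping exercise once the right fibre is chosen; no serious obstacle is anticipated beyond correctly invoking the classical specialization result and verifying that a very general simply connected fibre can be chosen inside the smooth locus $V$.
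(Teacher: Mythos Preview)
Your argument is correct and gives a clean, textbook route to the conclusion. The paper proceeds differently: rather than invoking the SGA~1 specialisation isomorphism, it argues directly by taking an arbitrary finite \'etale cover $Y_{\overline K}\to X_{\overline K}$, spreading it out to a finite \'etale cover $Y_{U'}\to X_{U'}$ over an \'etale neighbourhood $U'\to U\subset S$, and then using that the restriction to a very general closed fibre (which is simply connected) is a trivial cover to force $Y_{\overline K}$ to be a disjoint union of copies of $X_{\overline K}$. Both approaches encode the same principle---that \'etale simple connectedness propagates between the geometric generic fibre and very general closed fibres---but yours packages this into a single citation while the paper unwinds it by hand.

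One small point: your appeal to ``generic smoothness'' for the existence of $V$ is not quite the right justification, since that theorem requires the total space $X$ to be smooth, which is not assumed here ($X$ is only $\Q$-factorial). The correct reason $V$ exists is that the very general fibre is smooth (in the intended application it is an IHS manifold), hence the generic fibre $X_K$ is smooth, and smoothness then spreads out to an open of $S$. This is harmless in context, but worth stating precisely. Note also that you genuinely need the specialisation map to be an \emph{isomorphism} (not merely surjective), since surjectivity onto the trivial group says nothing about the source; so the smoothness hypothesis on $\pi_V$ is not cosmetic in your approach, whereas the paper's hands-on spreading-out argument does not require it.
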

\begin{proof}
Let $f_{\overline{K}}\colon Y_{\overline{K}} \to X_{\overline{K}}$ be an \'etale morphism. By taking an open subset $U\subset S$ and an \'etale cover of it $U'\to U$, 
the induced \'etale morphism $f_{U'}\colon Y_{U'}\to X_{U'}$ is such that $f_{\overline{K}} $ is the base change of $f_{U'}$ given by an algebraic field extension of $\overline K$. Since $\overline K$ is algebraically closed, the extension splits and
therefore $Y_{\overline{K}}$ is a disjoint union of copies of $X_{\overline{K}}$.
  \end{proof}

\begin{lemma}\label{lem:IHS}
Let $X$ be a $\Q$-factorial variety, $\pi \colon X\to S$  a fibration, $K\coloneqq\mathbb C(S)$ the function field of the base and $j\colon X_K\hookrightarrow X$ the natural inclusion. 
If the very general fibre of $\pi$ is a projective IHS manifold, then $X_K$ is an irreducible symplectic variety over $K$. 
\end{lemma}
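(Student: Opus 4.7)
The plan is to verify the three defining conditions of an irreducible symplectic manifold over $K$ applied to $X_K$. The étale fundamental group condition $\pi_1^{\mbox{\tiny \'et}}(X_{\overline K}) = \{1\}$ will follow at once from Lemma \ref{lem:cadorel}, since any projective IHS manifold over $\mathbb{C}$ is simply connected.

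For the condition $H^0(X_K, \Omega^2_{X_K/K}) \simeq K$ the idea is to spread out and then invoke Hodge theory in smooth projective families. Since $\pi$ is projective and the very general fibre is a smooth IHS manifold, there exists a non-empty Zariski open $U \subset S$ over which $\pi$ is smooth (the non-smooth locus is closed in $S$ and misses uncountably many fibres). Hodge numbers are locally constant in a smooth projective family, and uncountably many fibres over $U$ are IHS, hence satisfy $h^{2,0}=1$; so after shrinking $U$ one has $h^0(X_s, \Omega^2_{X_s}) = 1$ for every $s \in U$. By Grauert, the pushforward $\pi_*\Omega^2_{X_U/U}$ is then a line bundle that is compatible with base change, and taking its stalk at the generic point of $S$ yields the required identification.

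For the non-degeneracy condition, I would consider the morphism of line bundles on $U$ sending $\omega$ to $\omega^{\wedge n}$, from $\pi_*\Omega^2_{X_U/U}$ to $\pi_*\Omega^{2n}_{X_U/U} = \pi_*\omega_{X_U/U}$. On every fibre $X_s$ which is genuinely IHS---a Zariski-dense (in fact co-countable) subset of $U$---the image is the nowhere-vanishing volume form on $X_s$, so this morphism of line bundles is non-zero at the generic point of $U$. Hence $\omega_K^{\wedge n}$ does not vanish on $X_K$, as required. I do not foresee any serious obstacle: the whole argument is the standard ``spread out and pass to the generic fibre'' trick combined with Hodge-theoretic constancy, and the only subtlety is the initial reduction to a smooth family, which is ensured by the projectivity of $\pi$ together with the existence of at least one smooth fibre.
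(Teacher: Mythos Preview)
Your argument is correct and follows essentially the same route as the paper: restrict to the smooth locus $U\subset S$, use constancy of Hodge numbers together with Grauert/base-change to get $h^0(X_K,\Omega^2_{X_K/K})=1$, and invoke Lemma~\ref{lem:cadorel} for the triviality of $\pi_1^{\text{\'et}}(X_{\bar K})$. The paper is terser about the first two points (it just reads off the rank of $\pi_*\Omega^2_{X/S}$), but the content is the same.

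There is one imprecision in your non-degeneracy step. Showing that the wedge-power map of line bundles on $U$ is non-zero at the generic point only yields that $\omega_K^{\wedge n}$ is a non-zero element of $H^0(X_K,\omega_{X_K})$; the definition asks that $\omega_K^{\wedge n}$ be \emph{nowhere-vanishing} on $X_K$. The paper handles this by working upstairs rather than on the base: after spreading $\omega_K$ to a relative $2$-form $\omega$ over (an open in) $U$, the zero locus $Z\subset X_U$ of $\omega^{\wedge n}$ is closed, its image $\pi(Z)$ is closed in $U$ by properness, and $\pi(Z)$ misses the dense set of IHS points, so $X_K\cap Z=\emptyset$. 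Alternatively you can finish your own version by observing that $\omega_{X_K}\simeq\sO_{X_K}$ (the relative canonical is fibrewise trivial on $U$ and $h^1(\sO_{X_s})=0$, so it is pulled back from $U$), whence any non-zero global section is automatically nowhere-vanishing.
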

\begin{proof}

%
%
Let $s\in S$ be a very general point.
Since $X_K \coloneqq X_{\mbox{Spec}\big( \C(S)\big)}$ is the base change of the fibration 
$\pi \colon X \rightarrow S$ to $\mbox{Spec}\big( \C(S)\big)$, we have $\Omega_{X_K}^2 = j^* \Omega^2_{X/S}$
and
$$
H^0(X_K,\Omega^2_K) = H^0 (\mbox{Spec}\big(\C(S)\big), (\pi_K)_* \Omega_{X_K}^2),
$$
where $\pi_K = \pi \circ j$. 
Yet $(\pi_K)_* \Omega_{X_K}^2 \simeq (\pi_K)_* j^* \Omega^2_{X/S}
\simeq (\pi_* \Omega^2_{X/S}) \otimes \sO_{\mbox{Spec}\big( \C(S)\big)}$,
so $h^0 \big(\mbox{Spec}\big( \C(S)\big), (\pi_K)_* \Omega_{X_K}^2 \big)$ is just the rank of the direct image sheaf $\pi_* \Omega^2_{X/S}$; since $X_s$ is an IHS manifold, the rank is equal to one.

Choose now $0 \neq \omega_K \in H^0(X_K,\Omega^2_K)$, then $\omega \coloneqq j_* \omega_K$ induces a symplectic form on $X_s$. The locus $S_0 \subset S$ (not necessarily closed) of points $p$ such that ${\omega^{\wedge n}}_{|X_p}$ does not vanish, where $n\coloneqq \frac{\dim X_p}{2}$, is Zariski open. Since $\omega$ is symplectic on $X_s$, the set $S_0$ is not empty and therefore contains
$\mbox{Spec}\big( \C(S) \big)$.

Finally we know by Lemma \ref{lem:cadorel} that $X_{\overline K}$ is simply connected and we are done. 
\end{proof}

\begin{proposition}\label{prop:nonvan}
Let $\pi\colon X\to S$  be a fibration between $\Q$-factorial varieties with very general fibre $F$. Let $K\coloneqq\mathbb C(S)$ be the function field of the base and $j\colon X_K\hookrightarrow X$ the natural inclusion. 
\begin{enumerate}[(a)]
\item $\Nefp{F}\subset \Nefe{F}$ implies $\Nefp{X_K}\subset \Nefe{X_K}$.
\item If the very general fibre of $\pi$ is a projective IHS manifold, where every nef divisor is semiample, we have:
\subitem (b1) $\Movp{X_K}\subset \Move{X_K}$;
 \subitem(b2) $\Movp{X/S}\subset \Move{X/S}$.
\item If the very general fibre of $\pi$ is a projective IHS manifold, we have $\Movp{X_K}\supset \Move{X_K}$. 
\end{enumerate}
\end{proposition}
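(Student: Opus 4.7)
For part (a), the natural approach combines Lemma \ref{lem:fait} (lifting classes from $X_K$ to $X/S$), Lemma \ref{lem:rel_to_abs}(a) (turning $\pi$-effectiveness into an honest effective representative), and upper semi-continuity of $h^0$ in flat families. Given $[D]\in\Nefp{X_K}$, lift via Lemma \ref{lem:fait} to $\tilde D\in N^1(X/S)_{\mathbb{Q}}$. For very general $s\in S$, every curve on $X_s$ deforms to $X_K$ by countable avoidance, so the Mori cones of $X_K$ and $X_s$ are identified and $\tilde D|_F$ is nef on $F$; by the hypothesis $\Nefp{F}\subset\Nefe{F}$ it is $\mathbb{Q}$-effective. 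Argue by contradiction: were $\tilde D$ not $\pi$-effective, then for every $m\geq 1$ the locus $\{s\in S : h^0(X_s, m\tilde D|_{X_s})\geq 1\}$ would be a proper closed subvariety of $S$ by upper semi-continuity, and the countable union of these, together with the countable union defining \emph{very general}, would cover the irreducible $\mathbb{C}$-variety $S$, a contradiction. Hence $\tilde D$ is $\pi$-effective; Lemma \ref{lem:rel_to_abs}(a) supplies an effective $\tilde D'\sim_{\mathbb{R},\pi}\tilde D$ whose restriction to $X_K$ represents $D$.

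For (b), the same template applies with ``nef'' replaced by ``movable''. The key local input on the IHS fibre is that rational movable classes are $\mathbb{Q}$-effective: by Markman's chamber decomposition $\Mov{F}=\overline{\bigcup_\alpha \alpha^*\Amp{F'}}$ over birational (necessarily small) maps $\alpha\colon F\dashrightarrow F'$ between IHS manifolds, so any rational movable class equals $\alpha^*N$ for some rational nef $N$ on $F'$; under the semiampleness hypothesis (transferred along deformation equivalence) $N$ is $\mathbb{Q}$-effective, and $\alpha^*N$ inherits effectiveness by smallness of $\alpha$. Part (b2) is then immediate: the restriction $D|_F$ of any rational $\pi$-movable $[D]\in\Movp{X/S}$ is rational movable on $F$, hence $\mathbb{Q}$-effective, and the contradiction argument of (a) yields $\pi$-effectiveness of $D$. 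For (b1), first lift a rational movable class on $X_K$ to a $\pi$-movable one on $X/S$ using Lemma \ref{lem:fait}, then apply (b2).

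For (c), by Lemma \ref{lem:IHS} the variety $X_K$ is itself irreducible symplectic over $K$, so Takamatsu's \cite{Tak21} extension of the Markman--Amerik--Verbitsky chamber decomposition applies directly: $\Mov{X_K}=\overline{\bigcup_\alpha \alpha^*\Amp{Y_K}}$ over SQMs $\alpha\colon X_K\dashrightarrow Y_K$ between IHS varieties over $K$. Given $[D]\in\Move{X_K}$, pick an SQM $\alpha$ whose closed chamber $\overline{\alpha^*\Nef{Y_K}}$ contains $[D]$ and write $[D]=\alpha^*[N]$; effectiveness of $D$ combined with the smallness of $\alpha$ forces $[N]\in\Nefe{Y_K}$, and the general inclusion $\Nefe{Y_K}\subset\Nefp{Y_K}$ of \cite[Lemma 5.1(1)]{LZ22} applied on $Y_K$ yields $[D]=\alpha^*[N]\in\alpha^*\Nefp{Y_K}\subset\Movp{X_K}$.

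The most delicate step is the specialization underlying (a): that nefness on $X_K$ specializes to nefness on $X_s$ for very general $s$, which ultimately rests on a Noether--Lefschetz-type identification of Mori cones between the geometric generic fibre and very general closed fibres. A secondary subtlety in (c) is the handling of boundary classes, where $[D]$ may lie on a wall shared by several chambers $\alpha^*\Nef{Y_K}$; one must select a chamber whose closure contains $[D]$ together with an effective representative $[N]$, which is guaranteed by the full strength of the IHS cone conjecture.
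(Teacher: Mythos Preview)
Your approaches differ from the paper's in all three parts, with varying success.

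For (a), both arguments are valid but the mechanisms differ. You specialise nefness from $X_K$ to the very general fibre via a Noether--Lefschetz identification of Mori cones; the paper instead perturbs by a relatively ample $H$, observes that $(D+\tfrac1m H)|_{X_K}$ is ample, invokes openness of amplitude \cite[Cor.~9.6.4]{EGAIV} to get a nonempty open $U_m\subset S$ where $(D+\tfrac1m H)|_{X_s}$ is ample, and intersects over $m$. The paper's route is shorter and avoids the countable Hilbert-scheme bookkeeping.

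For (b), there is a genuine gap. From $\Mov{F}=\overline{\bigcup_\alpha\alpha^*\Amp{F'}}$ you infer that every rational movable class is of the form $\alpha^*N$ with $N$ nef; this does not follow, since a rational class on the boundary of $\Mov{F}$ (in particular an isotropic class with $q=0$) need not lie in any single $\alpha^*\Nef{F'}$ without further input. This boundary case is exactly where the semiampleness hypothesis does its work, and it is the content of \cite[Lemma~2.18]{Den22}, which the paper simply cites. The paper's overall shape is also different: it does not show $D|_{X_s}$ movable directly, but only \emph{pseudoeffective} (again by the ample-perturbation trick, writing $(D+\tfrac1mH)|_{X_K}$ as ample plus effective), then takes the divisorial Zariski decomposition $D|_{X_s}=P+N$ and applies \cite{Den22} to the positive part $P\in\Movp{X_s}$. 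Your reduction of (b1) to (b2) via Lemma~\ref{lem:fait} is also shaky: the surjection $\Mov{X/S}\twoheadrightarrow\Mov{X_K}$ need not send rational points to rational points with rational preimage in $\Mov{X/S}$.

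For (c), the paper's proof is a two-line lift-and-descend that bypasses Takamatsu entirely: lift $D_K\in\Move{X_K}$ to some $D\in\Move{X/S}$ via Lemma~\ref{lem:fait}, apply Lemma~\ref{lem:otherinc} to get $D\in\Movp{X/S}$, and restrict back to $X_K$. Your route requires a full chamber decomposition $\Move{X_K}=\bigcup_\alpha\alpha^*\Nefe{Y_K}$ over the non-closed field $K$, which is more than the fundamental-domain statement \cite[Thm.~1.0.5]{Tak21} cited in the paper, and you again face the wall/boundary issue that you flag but do not resolve.
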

\begin{proof}
$(a)$ Consider $D_K\in \Nefp{X_K}$. Without loss of generality we may
assume that $D_K$ is rational. By Lemma \ref{lem:fait} there exists $D\in N^1(X)_\Q$ such that $D_{|X_K}=D_K$. Let $H$ be an ample divisor on $X$. 
For any integer $m>0$ the divisor 
\begin{equation}\label{eq:amp}
(D+\frac{1}{m}H)_{|X_K} \textrm{ is ample.} 
\end{equation}
On the other hand, we know (cf.\ \cite[Corollary 9.6.4]{EGAIV}) that for each $m$ there exists an open subset $U_m\subset S$ such that $(D+\frac{1}{m}H)_{|X_s}$ is ample for all $s\in U_m$, and $U_m$ is not empty by (\ref{eq:amp}). Now set $U\coloneqq\cap_{m\in \N^*} U_m$. Then $D_{X_s}$ is nef for all $s\in U$.
By the hypothesis it follows that $D_{X_s}$ is effective  for any $s\in U$. Hence $D\in \Eff{X/S}$ and $D_K\in \Eff{X_K}$.

$(b)$ We give a detailed proof of $(b1)$. The proof of $(b2)$ is exactly the
same. Consider $D_K\in \Movp{X_K}$. Without loss of generality we may
assume that $D_K$ is rational.
By Lemma \ref{lem:fait} there exists $D\in N^1(X)_\Q$ such that $D_{|X_K}=D_K$. Let $H$ be an ample divisor on $X$. Then for any integer $m>0$ the divisor 
$(D+\frac{1}{m}H)_{|X_K} $ is big.  As such  we write it as 
$$
(D+\frac{1}{m}H)_{|X_K} = A_{m,K} + E_{m,K}
$$
with $A_{m,K} \in \Amp{X_K}$ and  $ E_{m,K}\in \Eff{X_K}$. Let $A_{m} \in N^1(X)_{\mathbb Q}$ and  $ E_{m}\in N^1(X)_{\mathbb Q}$ such that $(A_m)_{|X_K}=A_{m,K}$ and $ (E_m)_{|X_K}=E_{m,K}$. As before, for each $m$ there exists a non-empty open subset $U_m\subset S$ such that $(D+\frac{1}{m}H)_{|X_s}=(A_m+E_m)_{|X_s}$ is big for all $s\in U_m$. Setting again $U\coloneqq\cap_m U_m$ we deduce that $D_{|X_s}$ is pseudoeffective for all $s\in U$. Now consider the divisorial Zariski decomposition $D_{|X_s}=P+N$ (see \cite[Corollary 4.11]{Bou04}). The positive part $P$ is movable and more precisely lies in $\Movp{X_s}$, by construction as $D_K\in\Movp{X_K}$. By \cite[Lemma 2.18]{Den22} we have $\Movp{X_s}\subset \Move{X_s}$. Notice that \cite[Lemma 2.18]{Den22} is stated and proved for projective IHS manifolds  of the 4 known deformation types, but what is used is that on the IHS every nef divisor is semiample. In particular, $P$ is effective, thus so is $D_{|X_s}$. Hence, $D$ is $\pi$-effective, and therefore $D_K$ is effective. 

%
$(c)$ Consider $D_K\in \Move{X_K}$.  By Lemma \ref{lem:fait} there exists $D\in\Move{X/S}$ such that $D_{|X_K}=D_K$. By Lemma \ref{lem:otherinc} we have $D\in \Movp{X/S}$, hence $D_K\in \Movp{X_K}$.
 \end{proof}

We are now ready to prove our main result. 
\begin{proof}[Proof of Theorem \ref{thm:IHS-intro-gen}]
As before  $K\coloneqq\mathbb C(S)$ denotes the function field of the base and $j:X_K\hookrightarrow X$ the natural inclusion. Notice that $X_K$ is an irreducible symplectic variety over $K$ by Lemma \ref{lem:IHS}.  
By \cite[Theorem 1.0.5]{Tak21} the action of $\PsAut{X_K}$ on $\Movp{X_K}$ admits a rational polyhedral fundamental domain. By Proposition \ref{prop:nonvan}(c)  we have $\Movp{X_K}\supset \Move{X_K}$ 

By \cite[Proof of Theorem 6.1]{LZ22} the weak cone conjecture for $\Move{X_K}$ implies the relative weak cone conjecture for $\Move{X/S}$. Now observe that by Lemma \ref{lem:otherinc} and Proposition \ref{prop:nonvan}(b2), we have $\Move{X/S}=\Movp{X/S}$. 

Hence we can apply Looijenga's result Lemma \ref{lem:loo} to deduce the  relative  cone conjecture for $\Move{X/S}$.
From Proposition \ref{prop:mov_implies_nef}(a), we deduce that, up to isomorphism over $S$, there exist only finitely many $X'$ arising as SQM of $X$ over $S$. From Proposition \ref{prop:mov_implies_nef}(b), we deduce that for each of them the relative Nef cone conjecture holds. 
\end{proof}
\begin{proof}[Proof of Corollary \ref{cor:IHS}] It follows immediately from Theorem \ref{thm:IHS-intro-gen} and Remark \ref{rmk:gmm}(c).
\end{proof}

\section{A family of K3 surfaces}
\label{sectionfamilyK3}

We consider a family of K3 surfaces obtained as a refinement of the construction in \cite[Section 3]{BKPS98}:
let $\holom{\psi}{A}{C}$ be a general complete family of abelian surfaces
given by a curve $C \subset \bar{\mathcal A}_2$ cut out by general sufficiently ample divisors. Since the boundary of the Satake compactification has codimension more than one 
all the $\psi$-fibres are smooth and the very general fibre is an abelian surface with Picard number one. Up to making a base change we can assume that the family has a section $\holom{s_1}{C}{A}$ which we consider as the zero section giving $\psi$ the structure of an abelian group scheme.   The two-torsion points of this abelian group scheme form an \'etale multisection
$T_2 \subset A$ of degree 16 with $s_1(C) \subset T_2$ being a connected component.
A priori the other connected components of $T_2$ are not $\psi$-sections, but up to finite  base change we can assume that
$$
T_2 = \bigcup_{i=1}^{16} s_i(C)
$$
with $s_i\colon C \rightarrow A$ a section for any $i\in\{1,\dots,16\}$. Again, up to finite base change, we can assume that $g(C)>1$. The automorphism $z \mapsto -z$ acts on every $\psi$-fibre, so we can 
apply the Kummer construction to this family to obtain a smooth family of K3 surfaces of Kummer type
\begin{equation}
\label{thefamily}
\holom{\pi}{X}{C}
\end{equation}
such that the very general fibre $F$ has
Picard number $17$. 
More precisely, we denote by
$\holom{q}{A}{A/\pm 1}$ the quotient map and by 
$$
\holom{\mu}{X}{A/\pm 1}
$$
the fibrewise minimal resolution. If $\Theta \rightarrow A$ is a relative principal polarisation, there exists a Cartier divisor $H \rightarrow A/\pm 1$ such that
$q^* H \simeq 2 \Theta$. 

Note that $q\big(s_i(C)\big)$ is a section
of $A/\pm 1 \rightarrow C$ and $\mu$ is the blowup along the 16 disjoint curves $q\big(s_i(C)\big)$.
Thus we have 16 disjoint prime divisors
$$
E_1, \ldots, E_{16} \subset X
$$
that are $\mu$-exceptional and restrict to a $(-2)$-curve on every fibre.

An intersection computation now shows that the classes $\mu^* H, E_1, \ldots, E_{16}$ 
are linearly independent in $N^1(X/C)$, so $N_1(X/C)$ has rank 17 and for a very general fibre the restriction morphism
$$
N^1(X) \rightarrow N^1(F)
$$
is surjective. Note also that by the canonical bundle formula $K_{X/C} \simeq \pi^* M$, where $M$ is an ample line bundle on $C$.

\subsection{The relative automorphism group}

\begin{lemma} \label{lemmaEnegative}
Let $E_i$ be one of the exceptional divisors from the Kummer construction. Then we have $E_i \cdot c_2(X)=- 2 \deg M<0$.
\end{lemma}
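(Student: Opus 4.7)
The plan is to apply Whitney multiplicativity to the normal bundle sequence $0\to T_{E_i}\to T_X|_{E_i}\to N_{E_i/X}\to 0$ of the smooth divisor $E_i$ in the smooth threefold $X$; combined with adjunction $K_{E_i}=(K_X+E_i)|_{E_i}$, this reduces the computation to
$$c_2(X)\cdot E_i \;=\; \chi_{\mathrm{top}}(E_i)\;-\;K_X\cdot E_i^{\,2}\;-\;E_i^{\,3}.$$
Each of the three summands will then be evaluated from the explicit description of $E_i$ furnished by the Kummer construction.

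Since $\mu\colon X\to A/\!\pm 1$ is the fibrewise minimal resolution of a family of $A_1$-singularities along $q(s_i(C))\simeq C$, the divisor $E_i$ is a $\PP^1$-bundle over $C$. More precisely, letting $W=s_0^*T_{A/C}$ denote the relative Lie algebra bundle of the abelian scheme $\psi\colon A\to C$, translation by the $2$-torsion section $s_i$ gives $N_{s_i(C)/A}\simeq W$, and the Kummer construction identifies $E_i\simeq\PP(W^{\vee})$ with $N_{E_i/X}\simeq\mathcal{O}_{\PP(W^\vee)}(-2)$ (the factor $-2$ coming from the $\pm 1$-quotient performed after the blow-up). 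This already gives $\chi_{\mathrm{top}}(E_i)=4(1-g)$; setting $f\coloneqq\pi|_{E_i}$, the identity $K_X=\pi^*(M+K_C)$ together with the fact that $E_i|_{E_i}$ has degree $-2$ on every fibre of $f$ yields, via the projection formula, $K_X\cdot E_i^{\,2}=-2(\deg M+2g-2)$. Finally $E_i^{\,3}=4\xi^{2}$ where $\xi=c_1(\mathcal{O}_{\PP(W^\vee)}(1))$, and Grothendieck's relation (combined with $c_2(W)=0$ on the curve $C$) gives $\xi^{2}=-\deg W$.

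It remains to identify $\deg W$. For this I would compare canonical bundles along the Kummer diagram: since $\mu$ is crepant (it resolves $A_1$-singularities along a smooth curve) and $q\colon A\to A/\!\pm 1$ is étale in codimension one, the hypothesis $\omega_{X/C}\simeq \pi^{*}M$ forces $\omega_{A/C}\simeq q^{*}\omega_{(A/\pm 1)/C}\simeq \psi^{*}M$; on the other hand the abelian scheme structure gives $\omega_{A/C}\simeq \psi^{*}(\det W)^{-1}$ via pullback along the zero section, so $\det W\simeq M^{-1}$ and $\deg W=-\deg M$. Substituting the three values into the Whitney formula then produces
$$c_2(X)\cdot E_i\;=\;4(1-g)\;-\;\bigl(-2\deg M-4(g-1)\bigr)\;-\;4\deg M \;=\; -2\deg M,$$
which is strictly negative since $M$ is ample. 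The most delicate point is the sign bookkeeping in Grothendieck's relation and its compatibility with the identification $\det W\simeq M^{-1}$; once these are in place, the computation is purely formal.
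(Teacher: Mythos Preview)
Your proof is correct and follows the same overall strategy as the paper: both apply Whitney multiplicativity to the normal bundle sequence of $E_i\subset X$ and reduce to computing $\chi_{\mathrm{top}}(E_i)$, $K_X\cdot E_i^{2}$ and $E_i^{3}$. The only real difference lies in how $E_i^{3}$ is obtained. The paper works intrinsically: since $E$ is a ruled surface one has $K_{E/C}^{2}=0$, and expanding $(K_{X/C}+E)^{2}\cdot E=(\pi^{*}M+E)^{2}\cdot E$ via adjunction gives $E^{3}=4\deg M$ directly, without ever identifying $N_{E/X}$ or $\deg W$. You instead invoke the explicit description $N_{E_i/X}\simeq\mathcal{O}_{\PP(W^{\vee})}(-2)$ coming from the double cover $Y\to X$ (this is the content of your ``factor $-2$'' remark, and is justified by $\eta^{*}E_i=2D_i$ with $N_{D_i/Y}=\mathcal{O}(-1)$), then use Grothendieck's relation together with the canonical-bundle comparison $\det W\simeq M^{-1}$. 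Your route imports slightly more geometry of the Kummer construction---material the paper only develops later for Lemma~\ref{lemmactwo}---whereas the paper's argument for this lemma is more self-contained; but both arrive at $E_i^{3}=4\deg M$ and the final answer $-2\deg M$.
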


\begin{proof}
In order to simplify the notation we will denote $E_i$ by $E$.
Consider the exact sequence
$$
0 \rightarrow T_{E} \rightarrow {T_X}_{|E} \rightarrow N_{E/X} \rightarrow 0.
$$
Then we have
$$
E \cdot c_2(X) = c_2({T_X}_{|E}) = c_2(E) + c_1(E) \cdot c_1(N_{E/X}),
$$
so we only have to compute these two terms.
For the ruled surface $E$ we have 
$$
c_2(E) = e (E) = - 2 \deg K_C
$$
and $K_{E/C}^2=0$.
By adjunction and $E^2 \cdot F=-2$ we have
$$
K_{E/C}^2 = (K_{X/C}+E)^2 \cdot E = (\pi^* M+E)^2 \cdot E = - 4 \deg M + E^3,
$$
therefore $E^3=4 \deg M$. Again by adjunction we have
\begin{align*}
    c_1(E) \cdot c_1(N_{E/X}) &= -K_E \cdot E_{|E} = - (K_X+E) \cdot E^2  \\
& = - \big(\pi^* (K_C+M) + E\big) \cdot E^2 \\
& = 2 \deg K_C  + 2 \deg M - E^3
= 2 \deg K_C - 2 \deg M.\qedhere
\end{align*}
\end{proof}

Since the threefold $X$ is not uniruled we know by Miyaoka's theorem \cite[Theorem 1.1]{Mi87}
that $c_2(X) \in \NEX$, so we expect its intersection with most prime divisors 
to be non-negative. In our case this can be made more precise:

\begin{proposition}
\label{proposition-negative-surfaces}
Let $S \subset X$ be a prime divisor that is distinct from the exceptional divisors $E_j$. Then we have $S \cdot c_2(X) \geq 0$.
\end{proposition}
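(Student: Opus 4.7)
The argument naturally splits according to whether $\pi|_S$ is constant or dominant.

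In the first (vertical) case, $S$ is contained in a fibre of $\pi$, and since every K3 fibre is smooth and hence irreducible, $S$ must equal the whole fibre $F=\pi^{-1}(\pi(S))$; the normal bundle sequence $0\to T_F\to T_X|_F\to N_{F/X}\simeq \mathcal{O}_F\to 0$ immediately yields $F\cdot c_2(X)=c_2(T_F)=e(F)=24\geq 0$.

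From now on assume $\pi|_S\colon S\to C$ is surjective. The first step is to replace $c_2(X)$ by $c_2(T_{X/C})$: from the relative tangent sequence $0\to T_{X/C}\to T_X\to \pi^*T_C\to 0$ we obtain $c_2(T_X)=c_2(T_{X/C})+c_1(T_{X/C})\cdot\pi^*c_1(T_C)$, and since $K_{X/C}\simeq\pi^*M$ both $c_1(T_{X/C})=-\pi^*M$ and $\pi^*c_1(T_C)=-\pi^*K_C$ are pulled back from the curve $C$. Two such pullbacks have zero product as a $1$-cycle on $X$, because fibres of $\pi$ over distinct points of $C$ are disjoint and a single fibre has zero self-intersection. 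Hence $S\cdot c_2(X)=S\cdot c_2(T_{X/C})$.

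Now let $\sigma\colon\hat S\to S$ be a resolution of singularities, denote by $\tau\colon \hat S\to C$ the induced morphism $\pi|_S\circ\sigma$, and set $\mathcal E\coloneqq\sigma^*(T_{X/C}|_S)$, a rank-$2$ vector bundle on the smooth projective surface $\hat S$. Its first Chern class $c_1(\mathcal E)=-\tau^*M$ is pulled back from $C$, so $c_1(\mathcal E)^2=0$ and $S\cdot c_2(X)=\int_{\hat S}c_2(\mathcal E)$. Bogomolov's inequality then gives $c_2(\mathcal E)\geq c_1(\mathcal E)^2/4=0$ as soon as $\mathcal E$ is $\mu_H$-semistable for some ample $H$ on $\hat S$. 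For this I would invoke the classical fact that the tangent bundle of any projective K3 surface is slope-stable with respect to every polarization (Yau's theorem provides a Hermite--Einstein metric, making $T_F$ polystable, and $H^0(T_F)=0$ rules out the split case $T_F\cong\mathcal{O}\oplus\mathcal{O}$, upgrading polystability to stability). This makes $T_{X/C}$ slope-stable on every $\pi$-fibre, and I would then transport this fibrewise stability to $\hat S$ by using a polarization of the form $\tau^*H_C+\epsilon A$ on $\hat S$ (with $H_C$ ample on $C$, $A$ ample on $\hat S$, and $\epsilon$ small), for which $\mu$-semistability is detected on the general $\tau$-fibre.

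The main obstacle is precisely this last transport: Mehta--Ramanathan-type restriction theorems yield semistability only for generic divisors in a sufficiently positive linear system, whereas $\hat S$ is fixed. One has to argue semistability of $\mathcal E$ on our specific surface, equivalently semistability of $\mathcal E|_{C_s}=T_{F_s}|_{C_s}$ (a rank-$2$, degree-$0$ bundle on a curve in a K3 surface) for a general $\tau$-fibre $C_s$, using the stability of $T_{F_s}$ and the fact that the pair $(F_s,C_s)$ varies non-trivially with $s$. As a fall-back, when this fails one can appeal to Langer's generalised Bogomolov inequalities for non-semistable sheaves on surfaces, or to the specific Kummer structure via the diagram $\hat A\to X$ with $\hat A\to A$ the blow-up of the $2$-torsion sections (for the smooth abelian family $A\to C$ one has $c_2(T_{A/C})=0$), pulling the computation of $S\cdot c_2(X)$ up to $\hat A$ and analysing the defect coming from the ramification $\sum_j\hat E_j$ of the $2{:}1$ quotient $\hat A\to X$.
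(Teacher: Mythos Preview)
Your vertical case and the reduction $S\cdot c_2(X)=S\cdot c_2(T_{X/C})$ are correct. The genuine gap is exactly where you flag it, and unfortunately it is fatal for the main line of argument rather than a technicality to be patched. For the most interesting prime divisors $S$ in this family---namely those swept out by $(-2)$-curves in the fibres (cf.\ Lemma~\ref{lemma-no-monodromy})---the general $\tau$-fibre $C_s\subset F_s$ is a smooth rational $(-2)$-curve, and the normal bundle sequence
\[
0\ \longrightarrow\ T_{C_s}\simeq\sO_{\PP^1}(2)\ \longrightarrow\ T_{F_s}|_{C_s}\ \longrightarrow\ N_{C_s/F_s}\simeq\sO_{\PP^1}(-2)\ \longrightarrow\ 0
\]
exhibits a sub-line-bundle of strictly positive degree inside the degree-$0$ bundle $\mathcal E|_{C_s}$. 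Thus $\mathcal E|_{C_s}$ is \emph{never} semistable for such $S$, independently of $s$, so the hope that variation in $s$ rescues semistability cannot be realised, and Bogomolov's inequality in its standard form does not apply. Your fallback via Langer's inequalities for the Harder--Narasimhan filtration would require controlling the slope gap, and your last suggestion (pull up to $\hat A$ and analyse the ramification defect) is the right instinct but is left at the level of a hint.

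The paper's proof takes precisely that explicit Kummer route. Using the double cover $\eta\colon Y\to X$ and the blow-up $\tilde\mu\colon Y\to A$, one computes $c_2(X)=-3\delta^2-\pi^*c_1(V)\cdot\delta$, where $2\delta\sim\sum_i E_i$ and $V=\psi_*\Omega_{A/C}$ is a rank-$2$ bundle on $C$ which is nef and semistable (Proposition~\ref{propositionVsemistable}). Since $E_i\simeq\PP(V)$ with tautological class $\zeta_i$ and $\delta|_{E_i}=-\zeta_i$, one obtains
\[
S\cdot c_2(X)=\tfrac12\sum_{i=1}^{16}(S\cap E_i)\cdot\bigl(3\zeta_i-\mu_i^*c_1(V)\bigr).
\]
The class $3\zeta_i-\mu_i^*c_1(V)=(2\zeta_i-\mu_i^*c_1(V))+\zeta_i$ is nef because $V$ is semistable and nef, and $S\cap E_i$ is an effective $1$-cycle since $S\neq E_i$; the non-negativity follows. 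No stability of $T_{X/C}|_S$ is needed.
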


The set-up of the proof requires some preparation: let 
\begin{equation}
\label{sequence-psi}
0 \rightarrow \psi^* \Omega_C \rightarrow \Omega_A \rightarrow \Omega_{A/C} \rightarrow 0
\end{equation}
be the cotangent sequence for $\psi$. The relative cotangent bundle is trivial on every fibre, so $\Omega_{A/C} \simeq \psi^* V$
with $V$ a rank two vector bundle on the curve $C$. Using Miyaoka's semipositivity
theorem  \cite[Corollary 6.4]{Mi87} it is not hard to see that $V$ is nef.
Moreover since we started the construction
with a sufficiently positive curve $C \subset \bar{\mathcal A}_2$, 
we can deduce from the stability of the Hodge bundle \cite[Proposition 0.1]{MVZ12}
that $V$ is semistable. Thus we obtain:

\begin{proposition} \label{propositionVsemistable} 
The vector bundle $V$ is nef and semistable.
\end{proposition}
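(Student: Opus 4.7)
The plan is to first identify $V$ intrinsically as the Hodge bundle of the abelian scheme $\psi\colon A\to C$, and then invoke one classical semipositivity result for nefness and one restriction-theorem-type statement for semistability.

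Since $\psi$ is an abelian scheme, the relative cotangent bundle $\Omega_{A/C}$ is fibrewise trivial and canonically descends from the base. Concretely, if $e = s_1\colon C\to A$ is the zero section, then pullback along $e$ gives $e^*\Omega_{A/C} \simeq V$, and the identification $\Omega_{A/C}\simeq \psi^* e^*\Omega_{A/C}$ matches the defining isomorphism $\Omega_{A/C}\simeq \psi^* V$. In particular
$$
V \simeq \psi_*\Omega_{A/C}
$$
is the Hodge bundle of the family.

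For the nefness, the Hodge bundle of a family of abelian varieties (more generally, of any polarized variation of Hodge structure) over a smooth projective curve is nef. This follows from Griffiths' non-negativity for the curvature of the Hodge metric, algebraically reformulated via the Fujita--Kawamata--Viehweg semipositivity theorem; Miyaoka's \cite[Corollary 6.4]{Mi87} provides a statement tailored to direct images of relative differentials that can be applied to $\psi_*\Omega_{A/C}$ directly.

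For the semistability, one uses in an essential way that $C\subset\bar{\mathcal A}_2$ is cut out by general and sufficiently ample divisors. On a smooth toroidal compactification of $\mathcal A_2$ the Hodge bundle extends to a vector bundle which is polystable with respect to the natural polarization (a classical result going back to Faltings). A Mehta--Ramanathan-type restriction theorem then forces the restriction of this extended Hodge bundle to a general complete intersection curve of sufficiently high degree to remain semistable. This combination of inputs is exactly what is packaged in \cite[Proposition 0.1]{MVZ12}, which yields the semistability of $V$ directly.

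The main obstacle is the compatibility check between our construction and the hypotheses of \cite[Proposition 0.1]{MVZ12}. Two points need to be verified: first, that $C$ avoids the bad loci (the boundary of the Satake compactification has codimension $\geq 2$, so a general complete intersection curve misses it, but one should also lift to a smooth toroidal compactification where the Hodge bundle extends as a vector bundle); second, that the positivity of the cutting divisors chosen to produce $C$ is large enough to trigger the Mehta--Ramanathan principle. Both can be arranged at the outset of the construction in Section \ref{sectionfamilyK3} by taking the ample divisors cutting $C$ to be of sufficiently high degree, at which point both conclusions of the proposition follow from the two cited references.
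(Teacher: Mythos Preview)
Your proposal is correct and follows essentially the same approach as the paper: nefness via Miyaoka's semipositivity \cite[Corollary 6.4]{Mi87} and semistability via the stability of the Hodge bundle \cite[Proposition 0.1]{MVZ12} applied to a sufficiently positive curve $C\subset\bar{\mathcal A}_2$. Your version is more explicit in identifying $V$ with the Hodge bundle and in spelling out the Mehta--Ramanathan mechanism and the compatibility checks, but the underlying argument and the two key citations are identical to the paper's.
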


Let $\holom{\tilde \mu}{Y}{A}$ be the blowup along the 16 disjoint sections $s_i(C)$
by $D_i \subset Y$ the exceptional divisors. By construction of the blowup
$D_i = \PP(N_{s_i(C)/A}^*)$  and since $s_i(C)$ is a $\psi$-section we have
$$
N_{s_i(C)/A}^* \simeq \Omega_{A/C} \otimes \sO_{s_i(C)} \simeq V.
$$
Let
$\holom{\eta}{Y}{X}$ the double cover branched over the 16 divisors $E_i \simeq D_i$. The double cover $\eta$ is cyclic, so there exists a (non-effective) Cartier divisor class $\delta$ on $X$ such that
$$
2 \delta \simeq \sum_{i=1}^{16} E_i.
$$

\begin{lemma} \label{lemmactwo}
We have
$$
c_2(X) = -3 \delta^2 - \pi^* c_1(V) \delta.
$$
\end{lemma}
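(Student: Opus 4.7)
My plan is to compute $c_2(T_Y)$ in two ways---via the blowup $\tilde{\mu}\colon Y\to A$ and via the double cover $\eta\colon Y\to X$---and to combine them by pushing forward to $X$. Throughout set $R:=\sum_{i=1}^{16}D_i$. Two preliminary identities are needed: from $\Omega_{A/C}\simeq\psi^*V$ one has $c_2(T_A)=0$ (every term factors through $A^2(C)=0$) and $K_A=\psi^*(K_C+c_1(V))$; and comparing $K_Y=\tilde{\mu}^*K_A+R$ with $K_Y=\eta^*K_X+R$ forces $M=K_C+c_1(V)$, hence $K_C-M=-c_1(V)$.

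On the blowup side, each $s_i(C)$ is smooth of codimension $2$ in $A$ with normal bundle $V^*$. Denoting the inclusion $D_i\hookrightarrow Y$ by $\iota_i$ and the projection $D_i\to s_i(C)$ by $q_i$, the standard sequence $0\to T_Y\to\tilde{\mu}^*T_A\to\bigoplus_i(\iota_i)_*Q_i\to 0$ (with $Q_i$ the tautological rank-$1$ quotient on $D_i$, satisfying $c_1(Q_i)=-q_i^*c_1(V)+\zeta_i$), combined with $c_2(T_A)=0$ and $s_i^*c_1(T_A)=-c_1(V)-K_C$, yields after cancellation of the $c_1(V)$-terms
\[
c_2(T_Y)=\sum_{i=1}^{16}(\iota_i)_*\bigl(q_i^*K_C+\zeta_i\bigr).
\]
Pushing this forward by $\eta$---using that $\eta|_{D_i}$ is an isomorphism onto $E_i$ and that $\eta^*E_i=2D_i$ implies $(\eta|_{D_i})_*\zeta_i=-\tfrac{1}{2}c_1(N_{E_i/X})$, together with $\sum E_i=2\delta$ and $\sum E_i^2=4\delta^2$---one obtains $\eta_*c_2(T_Y)=2\pi^*K_C\cdot\delta-2\delta^2$.

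On the double-cover side, I would use $0\to\eta^*\Omega_X\to\Omega_Y\to\Omega_{Y/X}\to 0$ and crucially identify $\Omega_{Y/X}\cong i_*N^*_{R/Y}$---and \emph{not} $i_*\sO_R$: in local coordinates $y^2=f$ around $R$, the generator $dy$ of $\Omega_{Y/X}$ corresponds to the image of $y$ under the conormal differential $\sI_R/\sI_R^2\xrightarrow{d}\Omega_Y|_R$. This yields the resolution $0\to\sO_Y(-2R)\to\sO_Y(-R)\to\Omega_{Y/X}\to 0$, so $\mathrm{ch}(\Omega_{Y/X})=e^{-R}-e^{-2R}=R-\tfrac{3}{2}R^2+\cdots$. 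Comparing Chern characters up to degree $2$ and rearranging with $K_Y=\eta^*K_X+R$ produces
\[
c_2(T_Y)=\eta^*c_2(T_X)+K_Y\cdot R+R^2.
\]
Applying $\eta_*$ (with $\eta_*\eta^*=2\,\mathrm{id}$, $\eta_*R=2\delta$, and $\eta_*(R^2)=\tfrac{1}{2}\sum E_i^2=2\delta^2$ from the projection formula and $2D_i=\eta^*E_i$) gives $2c_2(X)=\eta_*c_2(T_Y)-2\pi^*M\cdot\delta-4\delta^2$. Substituting the blowup-side expression for $\eta_*c_2(T_Y)$ and $K_C-M=-c_1(V)$ yields the formula.

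The main obstacle will be the sheaf-theoretic identification $\Omega_{Y/X}\cong i_*N^*_{R/Y}$: the coefficient $-\tfrac{3}{2}$ of $R^2$ in $e^{-R}-e^{-2R}$---rather than the $-\tfrac{1}{2}$ arising from a naive treatment of $\Omega_{Y/X}$ as $i_*\sO_R$ with Chern character $1-e^{-R}$---is precisely what upgrades the coefficient of $\delta^2$ from $-2$ to $-3$ in the final identity.
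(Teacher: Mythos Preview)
Your argument is correct and follows essentially the same route as the paper: both compute $c_2$ on $Y$ twice, once via the blowup cotangent/tangent sequence over $A$ and once via the double-cover cotangent sequence over $X$, and then compare. The only organisational differences are that the paper works with total Chern classes and stays on $Y$ (using injectivity of $\eta^*$ to descend), whereas you use Chern characters and push forward by $\eta_*$; the identification $\Omega_{Y/X}\simeq\bigoplus_i\sO_{D_i}(-D_i)$ and the resulting ``extra'' $R^2$-contribution are exactly what the paper records as $c_2(\sO_{D_i}(-D_i))=2D_i^2$. One notational caveat: in the paper $M$ is defined by $K_{X/C}\simeq\pi^*M$, so comparing $K_Y=\tilde\mu^*K_A+R$ with $K_Y=\eta^*K_X+R$ actually yields $M=c_1(V)$, not $M=K_C+c_1(V)$; what you are really using is $K_X=\pi^*(K_C+c_1(V))$, and since your subsequent computation only needs this identity the slip is harmless.
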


\begin{proof}
From the exact sequence \eqref{sequence-psi} we obtain immediately that
$K_A \simeq \psi^* (K_C+ \det V)$ and $c_2(A)=0$.

Consider the cotangent sequence for the blowup $\tilde \mu$:
\begin{equation}
\label{sequence-blowup}
0 \rightarrow \tilde \mu^* \Omega_A \rightarrow \Omega_Y \rightarrow \Omega_{Y/A} \simeq \oplus_{i=1}^{16} \Omega_{D_i/C_i} \rightarrow 0. 
\end{equation}
Recall that $
{-D_i}_{|D_i} \simeq \sO_{\PP(N^*_{C_i/A})}(1) \simeq \sO_{\PP(V)}(1)$.
Denote by $\holom{\mu_i}{D_i}{C}$ the ruling, then we have 
$$
\Omega_{D_i/C_i} 
\simeq
K_{D_i/C_i} \simeq \mu_i^* \det V + 2 {D_i}_{|D_i}.
$$
Since this invertible sheaf is supported on the prime divisor $D_i$, we have
$$
c_1(\Omega_{D_i/C_i})=D_i, \qquad c_2(\Omega_{D_i/C_i})=-D_i^2- \tilde \mu^* \psi^* c_1(V) \cdot D_i.
$$
From \eqref{sequence-blowup} we obtain
$$
K_Y \simeq \tilde \mu^* \psi^* \big(K_C+c_1(V)\big) + \sum_{i=1}^{16} D_i,
\qquad
c_2(Y) = \mu^* \psi^* K_C \cdot \sum_{i=1}^{16} D_i - \sum_{i=1}^{16} D_i^2.
$$

Now consider the cotangent sequence for the double cover $\eta$:
\begin{equation}
\label{sequence-cotangent-eta}
0 \rightarrow \eta^* \Omega_X \rightarrow \Omega_Y \rightarrow \Omega_{Y/X} \simeq \oplus_{i=1}^{16} 
\sO_{D_i}(-D_i) \rightarrow 0
\end{equation}
It is elementary to compute that
$$
c_1\big( \sO_{D_i}(-D_i)\big) =D_i \text{ and }
c_2\big( \sO_{D_i}(-D_i)\big)= 2 D_i^2.
$$
Therefore, we deduce from \eqref{sequence-cotangent-eta} that
$$
c_1(\eta^* \Omega_X) = \tilde \mu^* \psi^* \big( K_C+c_1(V)\big)
\text{ and }
c_2(\eta^* \Omega_X) = -3 \left(\sum_{i=1}^{16} D_i\right)^2 - \tilde \mu^* \psi^* c_1(V) \sum_{i=1}^{16} D_i.
$$
Now recall that $\eta^* \sum_{i=1}^{16} E_i = 2 \sum_{i=1}^{16} D_i$ and therefore $\sum_{i=1}^{16} D_i \equiv \eta^* \delta$.
Hence,
$$
\eta^* c_2(X) = 
c_2(\eta^* \Omega_X) = -3 \eta^* \delta^2 - \eta^* \pi^* c_1(V) \delta 
$$
and the injectivity of $\holom{\eta^*}{H^4(X, \C)}{H^4(Y, \C)}$
allows to conclude.
\end{proof}

\begin{proof}[Proof of Proposition \ref{proposition-negative-surfaces}]
Recall that 
$$
E_i \simeq D_i \simeq \PP(V),
$$
where $V \rightarrow C$ is a semistable vector bundle of rank two (see Proposition \ref{propositionVsemistable}).
Thus if we denote by $\holom{\mu_i}{E_i}{C}$ the ruling and by
$\zeta_i \coloneqq c_1\big(\sO_{\PP(V)}(1)\big) \rightarrow E_i$ the tautological class,
then the class $2 \zeta_i - \mu_i^* c_1(V)$ is nef \cite[Proposition 6.4.11]{Laz04b}.
Recall also that $2 \delta \simeq \sum_{i=1}^{16} E_i$, 
so $\delta_{|E_i} \simeq -\zeta_i$ for every $i=1, \ldots, 16$.

Let now $S \subset X$ be a prime divisor that is distinct from the $E_i$. 
Then by Lemma \ref{lemmactwo} we obtain
\begin{multline*}
S \cdot c_2(X) = S \cdot \big( -3 \delta^2- \pi^* c_1(V) \delta\big)
= \frac{1}{2} S \cdot \big( -3 \delta-\pi^* c_1(V)\big) \cdot \sum_{i=1}^{16} E_i
\\
= \frac{1}{2} \sum_{i=1}^{16} S_{|E_i} \cdot \big( -3 \delta-\pi^* c_1(V)\big)_{|E_i} 
= \frac{1}{2} \sum_{i=1}^{16} (S \cap E_i) \cdot \big( 3 \zeta_i-\mu_i^* c_1(V)\big)
\end{multline*}
Since $2 \zeta_i - \mu_i^* c_1(V)$ is nef and $\zeta_i$ is nef by
Proposition \ref{propositionVsemistable},
the class $3 \zeta_i-\mu_i^* c_1(V)$ is nef. Yet $S \cap E_i$ is an effective divisor on $E_i$, so the statement follows.
 \end{proof}

\begin{proof}[Proof of Proposition \ref{proposition:aut:finite}]
Since $g(C)>1$ the automorphism group $\Aut(C)$ is finite.
Moreover, the $\pi$-fibres $F$ having $q(F)=0$, it is clear that 
every $g \in \Aut(X)$ satisfies $g^*F = F$
Let $E_j$ be one of the $\mu$-exceptional divisors. By Lemma \ref{lemmaEnegative}
and $g^* c_2(X)=c_2(X)$ we have
$$
g^* E_j \cdot c_2(X) = g^* E_j \cdot g^* c_2(X) = g^* \big( E_j \cdot  c_2(X)\big) < 0. 
$$
Hence, by Proposition \ref{proposition-negative-surfaces} the surface $g^* E_j$ is one of the exceptional divisor $E_i \subset X$. In other words, we have an action on the set of exceptional divisors giving a group morphism
$$
\Aut(X) \rightarrow S_{16}.
$$
Denote by $K$ the kernel of this map. 
For every $g \in K$ we have  $g^* E_j= E_j$. Since we also have $g^* F=F$,
an intersection computation shows that $g^* \mu^* H = \lambda \mu^* H$ for some $\lambda \in \R^+ $. Yet $g^*$ is an automorphism of the $\Z$-module $\NS(X)$, so $\lambda=1$.
Thus the group $K$ acts trivially on $N^1(X)$, in particular it fixes a polarisation.
This implies that $K$ is a finite group. In particular $\Aut(X)$, hence $\Aut(X/C)$, is finite.
\end{proof}

\subsection{An interesting SQM} \label{subsectionSQM}

\begin{lemma} \label{lemma-no-monodromy}
Let $F$ be a very general fibre of the family of Kummer surfaces as in \eqref{thefamily}.
Let $B \subset F$ be any $(-2)$-curve. Then there exists a prime divisor
$D \subset X$ such that we have an equality of cycles $D \cdot F = B$.
\end{lemma}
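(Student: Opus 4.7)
The plan is to construct $D$ as an irreducible component of the divisor of zeros of a carefully chosen global section of a line bundle on $X$ lifting $\sO_F(B)$, and to deduce primality from the irreducibility of $B$ on $F$.

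First, I would lift the class of $B$ to a line bundle on $X$. Since $F$ is a very general fibre, the restriction morphism $N^1(X) \to N^1(F)$ is surjective, as already observed after the construction of the family \eqref{thefamily}. Because $F$ is a K3 surface, $\pic(F) = N^1(F)$ is torsion free, so combining with $\pic(X) \twoheadrightarrow N^1(X)$ (modulo torsion) one can choose $\sL \in \pic(X)$ with $\sL|_F \simeq \sO_F(B)$.

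Next, I would produce a global section of a suitable twist of $\sL$ whose restriction to $F$ is the defining section of $B$. From the exact sequence $0 \to \sO_F \to \sO_F(B) \to \sO_B(-2) \to 0$ one computes $h^0(F, \sO_F(B)) = 1$ and $h^i(F, \sO_F(B)) = 0$ for $i \geq 1$, so $\pi_* \sL$ is a rank one torsion-free sheaf on $C$. Twisting $\sL$ by the pull-back of a sufficiently ample $L \in \pic(C)$, the sheaf $\pi_*(\sL \otimes \pi^* L) \simeq (\pi_* \sL) \otimes L$ acquires non-zero global sections, yielding a section $s \in H^0(X, \sL \otimes \pi^* L)$. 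If $s$ vanishes identically on $F$, then the divisor of zeros $D_0$ contains $F$ as a component, and I can divide $s$ by the pull-back of a local equation of $c = \pi(F) \in C$; iterating, I may assume that $s|_F$ is non-zero. Since $h^0(F, \sO_F(B)) = 1$, the restricted section must be a non-zero scalar multiple of the defining section of $B$, and therefore $D_0 \cdot F = B$ as cycles on $F$. After removing from $D_0$ any residual components of the form $F_{c'}$ (which do not affect $D_0 \cdot F$ since $F_{c'} \cdot F = 0$), one obtains an effective divisor $D$ still satisfying $D \cdot F = B$.

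Finally, I would verify that $D$ is prime. Writing $D = \sum_i a_i D_i$ as the decomposition into distinct prime components with $a_i \in \Z_{>0}$, none of the $D_i$ is contained in a fibre of $\pi$ by construction, so each $D_i$ dominates $C$ and $D_i \cdot F$ is a non-zero effective divisor on $F$. The equality $\sum_i a_i (D_i \cdot F) = B$ together with the irreducibility of $B$ forces $D_i \cdot F = m_i B$ with $m_i \in \Z_{>0}$; then $\sum_i a_i m_i = 1$ leaves only a single term, with $a_1 = m_1 = 1$, so $D = D_1$ is prime. The delicate point of the argument is really the first step---lifting the class of $B$ from $F$ to a line bundle on the total space $X$---but this is controlled by the rank $17$ intersection computation performed when constructing the family.
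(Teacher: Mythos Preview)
Your argument is correct, and it is genuinely different from the paper's. The paper proceeds via deformation theory: since $F$ is very general, countability of the Hilbert scheme forces the $(-2)$-curve $B$ to deform along $C$, and the image of the one-dimensional family of deformations is the prime divisor $D$. The point that requires care in the paper's approach is showing that the general fibre of $D\to C$ is irreducible (equivalently, that $D\cdot F=B$ rather than a multiple); this is where the paper invokes the surjection $N^1(X)\to N^1(F)$, arguing that any two components of $D\cap F$ would be numerically equivalent $(-2)$-curves, hence equal. You instead use the surjection $N^1(X)\to N^1(F)$ at the outset to lift $\sO_F(B)$ to a line bundle $\sL$ on $X$, and then extract $D$ from a global section; your primality step is cleaner, coming directly from the equation $\sum a_im_i=1$. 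The paper's route identifies $D$ intrinsically as the locus swept by deformations of $B$ and fits the ``no monodromy'' heading, while yours is more elementary and avoids the Hilbert scheme entirely. One small point you pass over: to conclude that $\pi_*\sL$ has rank one you need $h^0(\sL|_{X_c})=1$ for \emph{general} $c$, not just for the chosen very general fibre $F$; this follows because $h^1(\sL|_F)=h^2(\sL|_F)=0$ are open conditions and $\chi=1$ is constant, but it is worth saying.
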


It is well-known that this follows from the surjection $N^1(X) \rightarrow N^1(F)$, we give the details for the convenience of the reader:

\begin{proof}
Since $F$ is very general we know by countability of the Hilbert scheme that the curve $B \subset F$ deforms with $F$ and it is clear that a small deformation is a $(-2)$-curve.
Thus the cycle space $\Chow{X}$ is irreducible and of dimension one in the point $[B]$ and the image of the universal family defines a prime divisor $D \subset X$ such that
$B \subset D$ is an irreducible component of a fibre of $D \rightarrow C$.
We claim that the general fibre of $D \rightarrow C$ is irreducible, which implies the statement. For the proof of the claim note that up to replacing $F$ by some other very general fibre, we can assume without loss of generality that $D \cap F$ is reduced and every irreducible component is a deformation of $B$. Now recall that
the restriction map $N^1(X) \rightarrow N^1(F)$ is surjective, so there are uniquely defined $\alpha, \beta_i \in \Q$ such that
$$
[B] = (\alpha H + \sum \beta_i E_i)_{|F} 
$$
in $N^1(F)$. Thus the class of $B$ (and all its deformations) in $N_1(X)$ is $(\alpha H + \sum \beta_i E_i) \cdot F$. Yet this implies that two irreducible components
$B, B' \subset D \cap F$ have the same class in $F$. Since $B$ is a $(-2)$-curve, we get
$B=B'$.
\end{proof}

Lemma \ref{lemma-no-monodromy} shows that the deformations of every $(-2)$-curve in a very general fibres cover a divisor that defines an extremal ray in the relative pseudoeffective cone $\overline{\textrm{Eff}} (X/C)$. We will now use our result and MMP to show that, after a suitably chosen SQM, they are actually extremal rays in the relative Mori cone.  

\begin{proposition}\label{prop:interesting}
Let $\holom{\pi}{X}{C}$ be the family of Kummer surfaces \eqref{thefamily}.
There exists an SQM $\psi: X \dashrightarrow \tilde X$ over $C$ such that
\begin{itemize}
\item[-] The variety admits infinitely divisorial contractions $\holom{\mu_j}{\tilde X}{Y_j}$
over $C$.
\item[-] The relative automorphism group $\Aut(\tilde X/C)$, and therefore 
$\Bir(X/C)$, is infinite.
\end{itemize}
\end{proposition}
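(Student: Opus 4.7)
The strategy is to pair Lemma \ref{lemma-no-monodromy} (producing infinitely many prime divisors $D_{B_j}\subset X$, one for every $(-2)$-curve $B_j$ on a very general fibre) with Corollary \ref{cor:IHS} (which gives both finitely many SQMs of $X$ over $C$ and the relative nef cone conjecture on each of them) through a pigeonhole argument. First, I would fix an infinite sequence $\{B_j\}_{j\in\N}$ of pairwise distinct $(-2)$-curves on a very general fibre $F$; such a sequence exists on any Kummer K3 surface by \cite[Example 5]{BHT11}. Lemma \ref{lemma-no-monodromy} then furnishes prime divisors $D_{B_j}\subset X$ with $D_{B_j}\cdot F=B_j$, whose classes are pairwise distinct in $N^1(X/C)$ via the surjection to $N^1(F)$. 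For each $j$ I would then run a relative $(K_X+\epsilon D_{B_j})$-MMP over $C$ for $\epsilon>0$ small, which is well-defined and terminates thanks to the existence of good minimal models for effective klt pairs on K3 surfaces (Remark \ref{rmk:gmm}) combined with Theorem \ref{thm:HX13_R_divisor}. Since $K_X\equiv_\pi 0$, this is a $D_{B_j}$-MMP; every $(K_X+\epsilon D_{B_j})$-negative curve is contained in (the strict transform of) the prime divisor $D_{B_j}$, so every extremal contraction is birational. As $D_{B_j}|_F=B_j$ has no non-trivial nef multiple on the fibre, the minimal model must eventually contract the strict transform of $D_{B_j}$ divisorially. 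This yields an SQM $\psi_j\colon X\dashrightarrow\tilde X^{(j)}$ over $C$ (composing the intermediate flips) followed by a divisorial contraction $\mu_j\colon\tilde X^{(j)}\to Y_j$ that contracts the strict transform of $D_{B_j}$.

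By Corollary \ref{cor:IHS} there are, up to isomorphism over $C$, only finitely many SQMs of $X$; the pigeonhole principle then yields an infinite subset $J\subset\N$ and a single SQM $\psi\colon X\dashrightarrow\tilde X$ over $C$ with $\tilde X^{(j)}\simeq\tilde X$ over $C$ for all $j\in J$. This produces infinitely many pairwise non-isomorphic divisorial contractions $\mu_j\colon\tilde X\to Y_j$ over $C$ (non-isomorphism being forced by the pairwise distinct classes of the contracted divisors), yielding the first bullet. Applying Corollary \ref{cor:IHS} once more, the action of $\Aut(\tilde X/C)$ on $\Nefe{\tilde X/C}$ admits a rational polyhedral fundamental domain. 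Were $\Aut(\tilde X/C)$ finite, $\Nefe{\tilde X/C}$ would be a finite union of rational polyhedral cones, hence itself rational polyhedral, and in particular would have only finitely many codimension-one faces; but each $\mu_j$ for $j\in J$ carves out a distinct such face, namely $\mu_j^*\Nef{Y_j/C}$, a contradiction. Hence $\Aut(\tilde X/C)$ is infinite, and via conjugation by $\psi$ it injects into $\PsAut(X/C)\subset\Bir(X/C)$, yielding the second bullet.

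The main obstacle lies in the MMP argument of the second step: one must carefully verify that the $(X,\epsilon D_{B_j})$-MMP really decomposes as a sequence of flips (constituting an SQM over $C$) followed by a single divisorial contraction of the strict transform of $D_{B_j}$, without producing divisorial contractions of other divisors or pathological outputs. This hinges on the observation that every $(K_X+\epsilon D_{B_j})$-negative curve is forced to lie inside $D_{B_j}$, together with the negativity $B_j^2=-2$ on the K3 fibre to exclude the existence of a non-trivial nef pushforward of $D_{B_j}$ at the minimal model.
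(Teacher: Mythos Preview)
Your proposal is correct and follows essentially the same route as the paper: produce infinitely many prime divisors $D_j$ via Lemma \ref{lemma-no-monodromy}, run a $(K_X+\epsilon_j D_j)$-MMP over $C$ to obtain an SQM followed by a divisorial contraction of the strict transform of $D_j$, apply the pigeonhole principle against the finitely many SQM classes from Corollary \ref{cor:IHS}, and deduce infiniteness of $\Aut(\tilde X/C)$ from the relative nef cone conjecture. The paper cites Corollary \ref{cor:K3s} rather than Corollary \ref{cor:IHS} for the last step, but the content is identical; and the point you flag as the main obstacle (that the MMP really is flips followed by a single divisorial step contracting $D_j$) is also left implicit in the paper, with the justification being exactly the one you sketch: since $K$ remains $\pi$-trivial under SQMs, every negative curve lies in the strict transform of $D_j$, so the only possible divisorial step contracts it, and this step must occur because $B_j^2<0$ prevents the strict transform from becoming relatively nef.
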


\begin{proof}
The second statement is immediate from the first statement and Corollary \ref{cor:K3s}, since $\tilde X$ has infinitely many extremal rays in $\NE{X/C}$, the automorphism group must be infinite.

For the proof of the first statement, note first that by Corollary \ref{cor:IHS} we can choose finitely many SQM's $\merom{\psi_l}{X}{\tilde X_l}$ over $C$ such that every other
SQM is isomorphic to some $\tilde X_l$. Fix now a very general fibre $F \subset X$, and
let $(B_j)_{j \in \N} \subset F$ be an infinite collection of distinct $(-2)$-curves (which are known to exist on a Kummer surface). By Lemma \ref{lemma-no-monodromy} there
exist prime divisor $D_j \subset X$ such that $D_j \cdot F=B_j$ as cycles. Note that the divisors $D_j$ are distinct, since their intersections with $F$ are distinct curves.

For every $j \in \N$, fix $\epsilon_j>0$ such that the pair $(X, \epsilon_j D_j)$ is klt.
The divisor $K_X+\epsilon_j D_j$ is not $\pi$-nef, since the restriction to $F$ is not nef. Thus we can run a $(K_X+\epsilon_j D_j)$-MMP over $C$ decomposing into a SQM
$$
\merom{\varphi_j}{X}{X_j}
$$
and a divisorial contraction $\holom{\tau_j}{X_j}{Y_j}$ contracting
the divisor $(\varphi_j)_* D_j$ onto a curve in $Y_j$. 

By what precedes we know that up to renumbering, there are infinitely $j \in \N$ such
that $X_j \simeq \tilde X_1$ over $C$. Thus we can find isomorphisms
$$
\holom{f_j}{X_j}{\tilde X\coloneqq \tilde X_1}
$$
such that $f_j \circ \varphi_j = \psi_1 \eqcolon \psi$. We also set
$$
\holom{\mu_j\coloneqq\tau_j \circ f_j^{-1}}{\tilde X}{Y_j}
$$
and claim that the divisorial contractions $\mu_j$ are distinct. Yet $\mu_j$
contracts exactly the divisor $(f_j)_* (\varphi_j)_* D_j = \psi_* D_j$, so this is clear.
\end{proof}

\begin{remark}
The family $\tilde X \rightarrow C$ is still a smooth family of K3 surfaces of Kummer type, but it is no longer clear that it is globally Kummer, i.e.\ we have
a birational morphism
$$
\tilde \mu \colon \tilde X \rightarrow \tilde A/\pm 1.
$$
In fact the proof of Proposition \ref{proposition:aut:finite} indicates that such a structure contradicts the existence of infinitely many automorphisms. 

Recently Oguiso used his theory of $c_2$-contractions to study the relative automorphism group of fibered Calabi-Yau threefold \cite{Ogu24}.  
One might hope that a similar theory for families of K3 surfaces allows to get a better
understanding of $\Aut(\tilde X/C)$.
\end{remark}

\bibliographystyle{alpha}
\bibliography{biblio.bib}

\end{document}